\tikzset{%
  % Stil für linke geschweifte Klammern mit Beschriftung:
  mleftdelimiter/.style={inner ysep=0pt, inner xsep=1ex,left delimiter=\{,label={[label distance=3mm]left:#1}}
}
\definecolor{light-gray}{gray}{0.95}
\definecolor{darkblue}{rgb}{0,0,.5}
\newcommand{\C}{\mathbb{C}}
\newcommand{\E}{\mathbb{E}}
\newcommand{\N}{\mathbb{N}}
\newcommand{\R}{\mathbb{R}}
\newcommand{\uu}{\bold{u}}
\newcommand{\UU}{\bold{U}}
\newcommand{\VV}{\bold{V}}
\newcommand{\WW}{\bold{W}}
\newcommand{\SSS}{\bold{S}}
\newcommand{\XX}{\mathbf{X}}
\newcommand{\YY}{\mathbf{Y}}
\newcommand{\ZZ}{\bold{Z}}
\newcommand{\ttt}{\bold{t}}
\newcommand{\xx}{\bold{x}}
\newcommand{\cC}{\mathcal{C}}
\newcommand{\cE}{\mathcal{E}}
\newcommand{\cS}{\mathcal{S}}
\newcommand{\cU}{\mathcal{U}}
\providecommand{\keywords}[1]{\textbf{Keywords } #1}
\newcommand{\eqd}{\stackrel{\mathrm{d}}=}
\newcommand{\1}{\mathds{1}}
\newcommand{\conv}{{\scalebox{1.5}{\raisebox{-0.2ex}{$\ast$}}}}
\newcommand{\de}{\mathrm{\,d}}
\newcommand{\Ran}{\mathsf{Ran}}
\newcommand{\rank}{\mathsf{rank}}
\newcommand{\var}{\mathrm{Var}}
\newcommand{\Var}{\mathrm{Var}}
\newcommand{\JA}[1]{{\color{orange} #1}}
\DeclareMathOperator{\v@r}{V@R}
\DeclareMathOperator{\av@r}{AV@R}
\newtheorem{theorem}{Theorem}[section]
\newtheorem{proposition}[theorem]{Proposition}
\newtheorem{corollary}[theorem]{Corollary}
\newtheorem{lemma}[theorem]{Lemma}
\newtheorem{example}[theorem]{Example}
\newtheorem{remark}[theorem]{Remark}
\newenvironment{proof}[1][{Proof:}]{\begin{trivlist}
\item[\hskip \labelsep {\bfseries #1}]}{\hfill$\blacksquare$ \end{trivlist}}
\author{%
  Jonathan Ansari\textsuperscript{1} and 
  Sebastian Fuchs\textsuperscript{1} \\
  \\
  \textsuperscript{1} University of Salzburg, Austria \\
  \texttt{\{jonathan.ansari, sebastian.fuchs\}@plus.ac.at}
}
\title{On continuity of Chatterjee's rank correlation\\ and related dependence measures}
\begin{document}

\maketitle

\begin{abstract}
While measures of concordance---such as Spearman's rho, Kendall's tau, and Blomqvist's beta---are continuous with respect to weak convergence, Chatterjee’s rank correlation \(\xi\) recently introduced by \cite{chatterjee2021} does not share this property, causing drawbacks in statistical inference as pointed out by \cite{buecher2024}.
As we show in this paper, \(\xi\) is instead weakly continuous with respect to conditionally independent copies---the Markov products. To establish weak continuity of Markov products, we provide several sufficient conditions, including copula-based criteria and conditions relying on the concept of conditional weak convergence from \cite{Sweeting_1989}. 
As a consequence, we obtain continuity results for \(\xi\) and related dependence measures and verify their continuity in standard models such as multivariate elliptical and $\ell_1$-norm symmetric distributions.

\keywords{
conditional weak convergence;
copula;
elliptical distribution;
\(\ell_1\)-norm symmetric distribution;
Markov product;
noise resistance;
range convergence;
stochastically increasing;
%\kwd{weak conditional convergence}
}
% \PACS{PACS code1 \and PACS code2 \and more}
% \subclass{MSC code1 \and MSC code2 \and more}
%
%\subclass{60 E 15 \and  62 P 05\and 91 B 28\and 91 B 30}

\end{abstract}

% Titel

\maketitle

%%%%%%%%%%%%%%%%%%%%%%%%%%%%%%%%%%%%%%%%%%%%%%%%%%%%%%%%%%%%%%%%%%%%%%%%%%%%%%%%%%%%%%
\section{Introduction}
\label{Sec.Continuity}

In recent years, dependence measures that are able to characterize independence and perfect dependence have attracted a lot of attention in the statistics literature; see \cite{deb2020b, wiesel2022, chatterjee2020, strothmann2022}.
The certainly most famous one is Chatterjee's rank correlation whose population version is defined for a non-degenerate random variable \(Y\) and a random vector \(\XX\) by
\begin{align}\label{defxi}
    \xi(Y,\XX)
	:= \frac{\int_{\mathbb{R}} {\Var} (P(Y \geq y \, | \, \XX)) \; \mathrm{d} P^{Y}(y)}
					{\int_{\mathbb{R}} {\Var} (\mathds{1}_{\{Y \geq y\}}) \; \mathrm{d} P^{Y}(y)};
\end{align}
see \cite{chatterjee2021,Gamboa-Klein-Lagnoux-2018,siburg2013}. The measure \(\xi\) has the fundamental properties that \(\xi(Y,\XX) \in [0,1],\) where \(\xi(Y,\XX) = 0\) if and only if \(\XX\) and \(Y\) are independent, and \(\xi(Y,\XX) = 1\) if and only if \(Y\) \emph{perfectly depends} on \(\XX,\) i.e., there exists a measurable (not necessarily increasing or decreasing) function \(f\) such that \(Y = f(\XX)\) almost surely. Further, \(\xi\) exhibits several additional properties such as information monotonicity or characterization of conditional independence; see \citet[Lemma 11.2]{chatterjee2021}. 
However, the behavior of \(\xi\) within the interval \((0,1)\) is not yet fully understood. For example, one might ask how the value of \(\xi\) changes when a small perturbation is introduced into the response variable.
In the additive error model 
\begin{align}\label{eqadderrmod}
    Y = X + \sigma\varepsilon, \quad \sigma > 0, ~ \varepsilon \text{ and } X \text{ independent},
\end{align}
one expects that \(\xi(Y,X)\) is continuous in \(\sigma.\)
It is straightforward to verify that the bivariate random vector \((Y,X)\) in \eqref{eqadderrmod} is continuous in \(\sigma\) with respect to weak convergence. However, unlike %the Pearson correlation and 
concordance measures---such as Kendall's tau, Spearman's rho or Blomqvist's beta---Chatterjee's rank correlation fails to be weakly continuous, as the following example demonstrates.

%In is well-known that the Pearson correlation and measures of concordance such as Kendall's tau, Spearman's rho, or Blomqvist's beta are continuous with respect to weak convergence. However, as the following example shows, Chatterjee's rank correlation does not share this property.

\begin{example}[\(\xi\) is not continuous w.r.t. weak convergence of \((Y_n,X_n)\) to \((Y,X)\)]~~\label{Cont.ExWC}\\
Assume that the joint distribution of the bivariate random vector \((Y_n,X_n)\) follows some so-called shuffle-of-min copula; see \cite{Mikusinski-1992} for the definition. Then, \(Y_n\) and \(X_n\) are uniformly distributed on \((0,1)\), and \(Y_n=f_n(X_n)\) almost surely for some measurable function \(f_n\).
Since shuffles of min are dense in the class of doubly stochastic measures on \((0,1)^2\) with respect to weak convergence, \((Y_n,X_n)\) may converge in distribution, for example, to a random vector \((Y,X)\) with independent components; see \citet[Theorem 1]{Kimeldorf-1978}.
%or \citet[Theorem 5.2.10]{fdsempi2016}.
In this situation, \((Y_n,X_n)\) converges weakly to \((Y,X)\); however, it holds that \(\xi(Y_n,X_n)=1\ne 0= \xi(Y,X)\) for all \(n\in \N\). Thus $\xi$ fails to be continuous with respect to weak convergence\footnote{This example on the lack of weak continuity of \(\xi\) is due to \citet[Remark 4.2(b)]{ansari2023MFOCI} in the second arXiv version of the paper. Later, a similar example has been given in \citet[Corollary 1.1]{buecher2024}.}.
\end{example}

The lack of weak continuity as illustrated in Example \ref{Cont.ExWC} has several important consequences. Most notably, the empirical distribution function cannot serve as a basis for a consistent plug-in estimator for \(\xi\). Further, the values of \(\xi\) are intrinsically hard to interpret in the sense that they can strongly deviate for distributions that are arbitrary close in any distance that metrizes weak convergence.
Another drawback, as shown by \cite{buecher2024}, is that independence tests based on \(\xi\) may exhibit only trivial power against certain alternatives that converge weakly to independence. Consequently, no meaningful uniform confidence intervals for \(\xi\) can exist.

To better understand the behavior of \(\xi\) across various models and to ensure robustness in applications such as variable selection or clustering (see \cite{chatterjee2021,deb2020b,sfx2024cluster}), a certain notion of continuity is essential.
Motivated by the representation of \(\xi\) through a conditionally independent copy in Lemma \ref{DimRed.Prop} below, we study in Section \ref{secxi} sufficient conditions for continuity of \(\xi\). 
%As we study in Section \ref{secxi}, Chatterjee's rank correlation \(\xi\) is continuous under weak convergence of Markov products. 
More precisely, we show in Theorem \ref{ThmCont} that the numerator of \(\xi\) in \eqref{defxi} is weakly continuous in the Markov product \((Y,Y'),\) where \(Y'\) is a conditionally independent copy of \(Y\) given \(\XX.\) 
%Since \(\XX\) and \(Y\) are independent if and only if $Y$ and $Y'$ are independent, our results suggest testing on independence between \(Y\) and \(Y'\) and not between \(X\) and \(Y\) to address the drawbacks identified in \cite{buecher2024}. \SF{Den letzten Satz streichen und die beiden Absätze zusammenfügen.}
Since the denominator in \eqref{defxi} depends on \(\overline{\Ran(F_Y)}\) (the closure of the range of \(F_Y\)), general continuity results for \(\xi\) also require convergence of the range of \(F_{Y_n}.\)
While range convergence is a rather simple concept, we provide in Section \ref{Sec.Cont.} various sufficient conditions for weak continuity of the Markov product \((Y,Y').\) Thereby, Theorem \ref{Cont.MainThm} on conditional weak continuity of \((Y,\XX)\) based on \cite{Sweeting_1989} serves as a main result of our paper. 
Further, Theorem \ref{thmdel} establishes copula-based continuity conditions for \(\xi\) using a generalized Markov product for copulas.
In many applications where the distributional assumptions hold only approximately, our results imply stability of \(\xi\) under slight deviations from the model assumptions.
In particular, we confirm in Proposition \ref{propadderrmod} robustness of \(\xi\) with respect to the parameter \(\sigma\) in the additive error model \eqref{eqadderrmod}. 
In Section \ref{secreldepm}, we study continuity of various dependence measures that are related to Chatterjee's rank correlation, before we show in Section \ref{Sec.Cont.PM} that all these measures are continuous in  standard models such as multivariate elliptical and \(\ell_1\)-norm symmetric distributions, or various copula models.
In particular, we obtain under some regularity conditions that Chatterjee's rank correlation is weakly continuous in these classes of models.
%For a better readability of the paper, several proofs are deferred to the appendix.

%%%%%%%%%%%%%%%%%%%%%%%%%%%%%%%%%%%%%%%%%%%%%%%%%%%%%%%%%%%%%%%%%%%%%%%%%%%%%%%%%%%%%%
%\bigskip\clearpage

\section{Continuity of \(\xi\)}\label{secxi}

For proving a first continuity result for $\xi$, stated in Theorem \ref{ThmCont} below, we shall use a representation of $\xi(Y,\XX)$ in terms of a dimension reduction principle that preserves the key information about the degree of functional dependence of $Y$ on $\XX$. Therefore, let \((Y,\XX)\) be a \((1+p)\)-dimensional random vector for some \(p\in \N.\)
We denote by \(Y'\) a conditionally independent copy of \(Y\) relative to \(\XX,\) i.e.,
\begin{align}\label{Assumption.DimR}
  (Y'|\XX = \xx ) \eqd (Y|\XX = \xx) 
  \textrm{ for } P^\XX\text{-almost all } \xx\in \R^p \text{ and } Y \perp_\XX Y'  ,
\end{align}
where \(\eqd\) indicates equality in distribution and \(Y \perp_\XX Y'\) denotes conditional independence of \(Y\) and \(Y'\) given \(\XX.\)
We refer to the distribution of the bivariate random vector \((Y,Y')\) in \eqref{Assumption.DimR} as the \emph{Markov product} of \((Y,\XX)\) and point to related concepts such as the Markov product for copulas studied by \cite{darsow-1992, lageras-2010} and the Pick-Freeze sampling scheme by \cite{Gamboa-Klein-Lagnoux-2018, janon2014}.
A basic property of the Markov product is that
\(Y\) and \(\XX\) are independent if and only if $Y$ and $Y'$ are independent. 
Further, $Y$ perfectly depends on $\XX$ if and only if $Y = Y'$ almost \textit{surely}; see \cite{fuchs2025ArxivMP}.

We make use of the following representation which states that \(\xi(Y,\XX)\) only depends on the diagonal of the distribution function of the Markov product \((Y,Y')\)
%\(\psi_{Y|\XX}\) 
and on the closure of the range of \(F_Y\). Recall that \(\Ran(F_Y)\) is the range of the distribution function of \(Y,\) and \(\overline{A}\) denotes the closure of a set \(A\subseteq [0,1].\)

\begin{lemma}[Representation of $\xi$; {\citet[Proposition 2.5]{ansari2023MFOCI}}]~~\label{DimRed.Prop}\\
For \((Y,Y')\) in \eqref{Assumption.DimR}, Chatterjee's rank correlation satisfies
\begin{align}\label{lemrepT1}
  \xi(Y,\XX) 
  & = a \int_{\R} P(Y < y, Y' < y) \de P^Y(y) - b
\end{align}
for positive constants 
\(a := (\int_{\R} \Var(\1_{\{Y\geq y\}})\de P^Y(y))^{-1}\) and 
\(b := a \int_{\R} P(Y<y)^2 \de P^Y(y),\) both depending only on \(\overline{\Ran(F_Y)}.\)
\end{lemma}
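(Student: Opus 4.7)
The plan is to rewrite the numerator of $\xi$ in \eqref{defxi} in terms of the Markov product $(Y,Y')$ by exploiting the conditional independence in \eqref{Assumption.DimR}. The key algebraic identity is that, conditioning on $\XX$ and using that $Y$ and $Y'$ are conditionally i.i.d.\ given $\XX$,
$$
    \E\bigl[P(Y \geq y \mid \XX)^2\bigr]
    = \E\bigl[P(Y \geq y \mid \XX)\,P(Y' \geq y \mid \XX)\bigr]
    = \E\bigl[P(Y\geq y,\,Y'\geq y \mid \XX)\bigr]
    = P(Y \geq y,\,Y' \geq y)\,.
$$
Combined with $\E[P(Y \geq y \mid \XX)] = P(Y \geq y)$, this yields $\Var(P(Y\geq y \mid \XX)) = P(Y\geq y,\,Y'\geq y) - P(Y\geq y)^2$.

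Next I would rewrite the right-hand side using $P(Y \geq y) = 1 - P(Y<y)$ together with $P(Y\geq y, Y'\geq y) = 1 - 2 P(Y<y) + P(Y<y,Y'<y)$, the latter being valid because $Y$ and $Y'$ share the same marginal law. After cancellation of the linear terms one obtains
$$
    \Var\bigl(P(Y\geq y \mid \XX)\bigr) = P(Y<y,\,Y'<y) - P(Y<y)^2\,.
$$
Integrating against $P^Y$ and dividing by $\int_{\R} \Var(\1_{\{Y\geq y\}})\de P^Y(y)$ then yields \eqref{lemrepT1} with $a$ and $b$ as stated.

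For the final claim that $a$ and $b$ depend only on $\overline{\Ran(F_Y)}$, I would observe that both constants are of the form $\int_\R h(P(Y<y))\de P^Y(y)$ for a fixed measurable function $h$, namely $h(u)=u(1-u)$ and $h(u)=u^2$. The change of variables $u = F_Y(y-)$ pushes $P^Y$ forward to a canonical probability measure on $[0,1]$ whose support is $\overline{\Ran(F_Y)}$: the continuous part of $P^Y$ contributes a Lebesgue-like piece on the continuous portion of the range, while each atom of $Y$ contributes a point mass at the left endpoint of the corresponding gap in $\Ran(F_Y)$, with mass equal to the gap's length. Since this pushforward measure is determined by $\overline{\Ran(F_Y)}$ alone, so are $a$ and $b$.

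The main obstacle is not conceptual — the algebraic identity is a direct computation using conditional independence. The only subtlety lies in the change-of-variables step, where one must carefully distinguish $P(Y<y) = F_Y(y-)$ from $F_Y(y)$ at atoms of $Y$ and verify that the gap structure of $\overline{\Ran(F_Y)}$ fully encodes the atom masses. Once the canonical pushforward measure associated with $\overline{\Ran(F_Y)}$ is identified, the dependence claim becomes transparent, and indeed this matches the content of \cite[Proposition 2.5]{ansari2023MFOCI} which I would simply invoke.
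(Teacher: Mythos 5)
Your proposal is correct. Note that the paper does not actually prove this lemma but simply cites \cite[Proposition 2.5]{ansari2023MFOCI}; your argument is a valid self-contained derivation of exactly that result: the conditional-independence computation \(\E[P(Y\geq y\mid\XX)^2]=P(Y\geq y,\,Y'\geq y)\), the inclusion--exclusion step using \(Y'\eqd Y\), and the integration are all sound, and your treatment of the constants via the pushforward of \(P^Y\) under \(y\mapsto F_Y(y-)\) (Lebesgue on the range closure plus gap-length atoms at left gap endpoints) is equivalent to the \(F_Y^-\circ F_Y^{-1}\) quantile-transform argument the paper itself uses later in the proof of Theorem \ref{ThmCont}, so the claim that \(a\) and \(b\) depend only on \(\overline{\Ran(F_Y)}\) is justified.
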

If \(F_Y\) is continuous, then \(\overline{\Ran(F_Y)} = [0,1],\) and the constants in \eqref{lemrepT1} take the values \(a = 6\) and \(b = 2\); see \citet[Section 3.1]{Gamboa-Klein-Lagnoux-2018} when \(F_Y\) admits a Lebesgue-density.
Further, note that for distribution functions \(F\) and \(G,\) we have the identity 
\begin{align}
    \overline{\Ran(F)} = \overline{\Ran(G)} \quad \Longleftrightarrow \quad F\circ F^{-1}(t) = G\circ G^{-1}(t) \quad \text{for } \lambda\text{-almost all } t \in (0,1),
\end{align}
where \(\lambda\) denotes the Lebesgue measure; see \citet[Proposition 2.14]{Ansari-2021}.
Hence, to obtain convergence of \(\xi(Y_n,\XX_n),\) the representation in \eqref{lemrepT1} motivates to study convergence of \(\int_{\R} P(Y_n < y, Y_n' < y) \de P^{Y_n}(y)\) and \(F_{Y_n}\circ F_{Y_n}^{-1}.\)  %This is the content of the following result

%\JA{Bezeichnung einführen für die Konvergenzen im folgenden Theorem?  \(\conv\)-product convergence and range convergence? Offensiv in einer Definition oder defensiv in der nachfolgenden Bemerkung?
%conditional weak convergence von \((\XX_n,Y_n)\) ist hinreichend für \(\conv\)-product convergence von \((Y_n,Y_n')\), aber nicht notwendig, da \(\XX_n\) bijektiv transformiert werden kann.} 

The following theorem gives sufficient conditions for continuity of \(\xi\) in terms of weak convergence of Markov products and in terms of range convergence of the marginal distribution functions. We write \(\VV_n\xrightarrow{~d~} \VV\) for a sequence of random vectors \((\VV_n)_{n\in \N}\) converging in distribution to a random vector \(\VV.\)

\begin{theorem}[Continuity of \(\xi\) based on Markov products]~~\label{ThmCont}\\
For random vectors \((Y_n,\XX_n)\) and \((Y,\XX)\), let \(Y'\) and, similarly, \(Y_n'\) be defined as in \eqref{Assumption.DimR}. If
\begin{enumerate}[(i)]
    \item \label{ThmCont1} \((Y_n,Y_n') \xrightarrow{~d~} (Y,Y')\) and
    \item \label{ThmCont2} \((F_{Y_n}\circ F_{Y_n}^{-1})(t)\to (F_Y\circ F_Y^{-1})(t)\) for \(\lambda\)-almost all \(t\in (0,1),\)
\end{enumerate}
 then \(\lim_{n \to \infty} \xi(Y_n,\XX_n) = \xi(Y,\XX).\) 
\end{theorem}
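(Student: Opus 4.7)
The approach is to substitute the representation from Lemma~\ref{DimRed.Prop} into $\xi(Y_n,\XX_n)$ and reduce to three separate limit problems. Writing $I_n := \int_\R P(Y_n<y,\,Y_n'<y)\,\de P^{Y_n}(y)$ and $I$ analogously, Lemma~\ref{DimRed.Prop} gives $\xi(Y_n,\XX_n) = a_n I_n - b_n$ and $\xi(Y,\XX) = a I - b$, where the constants depend only on $\overline{\Ran(F_{Y_n})}$ and $\overline{\Ran(F_Y)}$, respectively. Thus it suffices to verify (A) $a_n\to a$, (B) $b_n\to b$, and (C) $I_n\to I$.

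For (A) and (B), the plan is to pass to the quantile scale. Using $y = F_{Y_n}^{-1}(t)$, one rewrites
\[
a_n^{-1} = \int_0^1 F_{Y_n}(F_{Y_n}^{-1}(t)^-)\bigl(1 - F_{Y_n}(F_{Y_n}^{-1}(t)^-)\bigr)\,\de t, \qquad \frac{b_n}{a_n} = \int_0^1 F_{Y_n}(F_{Y_n}^{-1}(t)^-)^2\,\de t.
\]
Splitting $(0,1)$ into $\overline{\Ran(F_{Y_n})}$ and its at most countably many complementary jump intervals $(a_{n,i}, b_{n,i})$, the integrands reduce to $t(1-t)$ and $t^2$ on the former and to the constants $a_{n,i}(1-a_{n,i})$ and $a_{n,i}^2$ on each jump interval, making both integrals explicit functionals of the jump structure of $F_{Y_n}$. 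Condition~(ii), via the equivalence recalled just before the theorem, implies that these jump intervals match, asymptotically, those of $F_Y$, and dominated convergence (the integrands are uniformly bounded by $1$) then yields (A) and (B).

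For (C), introduce $Z_n$ independent of $(Y_n,Y_n')$ with $Z_n\sim F_{Y_n}$, so that by Fubini $I_n = P(Y_n<Z_n,\,Y_n'<Z_n)$. Condition~(i) implies $Y_n \xrightarrow{~d~} Y$ as a marginal, hence $Z_n \xrightarrow{~d~} Z$ with $Z\sim F_Y$, and independence is preserved in the limit, giving $(Y_n,Y_n',Z_n) \xrightarrow{~d~} (Y,Y',Z)$ with $Z\perp(Y,Y')$. Since $\{y_1<z,\,y_2<z\}$ is open and $\{y_1\le z,\,y_2\le z\}$ is closed, a two-sided Portmanteau argument delivers
\[
P(Y<Z,\,Y'<Z) \,\le\, \liminf_n I_n \,\le\, \limsup_n I_n \,\le\, P(Y\le Z,\,Y'\le Z),
\]
whose gap equals $P(\max(Y,Y') = Z)$ and vanishes whenever $F_Y$ is continuous, settling the continuous case at once.

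The main obstacle is to close this gap when $F_Y$ has atoms, since Portmanteau alone then loses information on events of positive mass. The plan is to invoke condition~(ii) a second time: it forces each jump interval of $F_Y$ to coincide, asymptotically, with a jump interval of $F_{Y_n}$, so that the atoms of $Z_n$ align in location and mass with those of $Z$. Combined with the joint convergence $(Y_n,Y_n') \xrightarrow{~d~} (Y,Y')$, which controls the atoms of $\max(Y_n,Y_n')$, this upgrades the Portmanteau sandwich to $P(\max(Y_n,Y_n')=Z_n) \to P(\max(Y,Y')=Z)$. Both bounds then collapse to the same limit, forcing $I_n\to I$; together with (A) and (B), this yields $\xi(Y_n,\XX_n)\to\xi(Y,\XX)$.
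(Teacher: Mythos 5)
Your reduction to (A) $a_n\to a$, (B) $b_n\to b$, (C) $I_n\to I$ matches the paper's strategy, and your treatment of (A)--(B) on the quantile scale is essentially the paper's (which handles the passage to the left-continuous compositions $F_{Y_n}^-\circ F_{Y_n}^{-1}$ by citing a lemma rather than by your jump-interval bookkeeping; that part is fixable). Your route to (C) is genuinely different from the paper's: you triple up with an independent $Z_n\sim F_{Y_n}$ and run a two-sided Portmanteau sandwich, whereas the paper applies Sklar's theorem to $(Y_n,Y_n')$, upgrades pointwise convergence of the copulas to uniform convergence on $\overline{\Ran(F_Y)}\times\overline{\Ran(F_Y)}$ via Lipschitz continuity and Arzel\`a--Ascoli, and then composes with $F_{Y_n}^-\circ F_{Y_n}^{-1}(t)\to F_Y^-\circ F_Y^{-1}(t)$. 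Your argument does settle the case of continuous $F_Y$ cleanly and more elementarily.

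However, in the general (atomic) case there is a genuine gap at the step where you ``upgrade the Portmanteau sandwich'' to $P(\max(Y_n,Y_n')=Z_n)\to P(\max(Y,Y')=Z)$. What conditions (i)+(ii) give you directly is control of the \emph{marginal} jump structure: combined with $Y_n\xrightarrow{~d~}Y$, range convergence does force atoms of $F_{Y_n}$ to align in location and mass with those of $F_Y$. But the quantity you need is a statement about the \emph{joint} law of $(Y_n,Y_n')$ at the moving diagonal points, e.g.\ $P(Y_n\le z_n,\,Y_n'\le z_n)$ and $P(Y_n=z_n,\,Y_n'=z_n)=\E\big[P(Y_n=z_n\mid\XX_n)^2\big]$, and weak convergence of $(Y_n,Y_n')$ does not control probabilities of such non-continuity sets: if you try to prove $P(Y_n\le z_n,Y_n'\le z_n)\to P(Y\le z,Y'\le z)$ by Portmanteau you reproduce exactly the same gap of size $P(\max(Y,Y')=Z)$ you are trying to remove, so the argument is circular; and convergence of the marginal atom masses $\E[P(Y_n=z_n\mid\XX_n)]$ does not imply convergence of the second moments $\E[P(Y_n=z_n\mid\XX_n)^2]$ by itself. (The claim you need is in fact true under (i)+(ii), but the known way to see it is precisely the uniform convergence $C_{Y_n,Y_n'}\to C_{Y,Y'}$ on $\overline{\Ran(F_Y)}\times\overline{\Ran(F_Y)}$ evaluated along $F_{Y_n}\circ F_{Y_n}^{-1}$ and $F_{Y_n}^-\circ F_{Y_n}^{-1}$, i.e.\ the paper's copula argument --- not a consequence of weak convergence plus marginal atom alignment.) So as written, your proof is complete only when $F_Y$ is continuous; to cover the general case you must supply an argument that controls the joint law of $(Y_n,Y_n')$ at the atoms, for instance by inserting the Sklar/Arzel\`a--Ascoli step.
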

 Note that in the above theorem \(\XX_n\) and \(\XX\) can have different dimensions.

\begin{remark} \label{RemCont}
\begin{enumerate}[(a)]
\item Condition \eqref{ThmCont1} of Theorem \ref{ThmCont} states that the sequence of Markov products \((Y_n,Y_n')\) converges in distribution to \((Y,Y').\) It guarantees continuity of the numerator in \eqref{defxi}. By the continuous mapping theorem, dependence measures that can be represented as continuous functionals of \((Y,Y')\) are continuous with respect to weak convergence of the Markov products; see Section \ref{secmeaexpl} for continuity results on a measure of explainability. 
A similar continuity condition as in \eqref{ThmCont1} is given in \citet[Proposition 8]{deb2020b} for the kernel partial correlation coefficient.
In Section \ref{sec2}, we provide several sufficient conditions for weak convergence of \((Y_n,Y_n')\) to \((Y,Y').\) 
%In particular, we show in Theorem \ref{Cont.MainThm} that conditional weak convergence of \(Y_n|\XX_n = \xx\) as studied in \cite{Sweeting_1989} implies the weak convergence of \((Y_n,Y_n').\) 
In particular, we show by Theorem \ref{Cont.MainThm} that, under an additional continuity assumption on the conditional distributions \(Y_n\mid \XX_n = \xx,\) weak convergence of \((Y_n,\XX_n)\) to \((Y,\XX)\) implies weak convergence of the associated Markov products.
%The concept of conditional weak convergence is stronger than weak convergence. However, not even weak convergence of \((\XX_n,Y_n)\) to \((Y,\XX)\) is necessary for the weak convergence in \eqref{ThmCont} because the latter convergence is not affected by bijective transformations of \(\XX_n.\)
%\item 
Note that condition \eqref{ThmCont1} is neither sufficient nor necessary for weak convergence of the random vectors \((Y_n,\XX_n)\) to \((Y,\XX)\); see Examples \ref{ex2nsuf} and \ref{ex2nnec} below.
 %Due to Example \ref{ex2nnec} below, weak convergence of bivariate random vectors \((Y_n,X_n)\) to \((Y,X)\) does not imply \((Y_n,Y_n')\xrightarrow{~d~} (Y,Y').\)
Consequently, Chatterjee's rank correlation underlies a different mode of convergence compared to measures of concordance such as Kendall's tau or Spearman's rho. 
\item \label{RemCont:2}
Condition \eqref{ThmCont2} of Theorem \ref{ThmCont} states that the ranges of the distribution functions \(F_{Y_n}\) converge for Lebesgue-almost all points to the range of the limiting distribution function \(F_Y.\) 
%cf.~\citet[Proposition 2.14]{Ansari-2021}. 
It guarantees that the denominator in \eqref{defxi} converges. Note that this condition is neither necessary nor sufficient for weak convergences of \((Y_n)_{n \in \mathbb{N}}\) to \(Y\); see \citet[Examples 2.19 and 2.20]{Ansari-2021}. 
Typical approximations of distribution functions satisfy the concept of range convergence; see \citet[Examples 2.18]{Ansari-2021}.
If all \(Y_n\) and \(Y\) have a continuous distribution function, then condition \eqref{ThmCont2} is trivially fulfilled.
%Similar continuity conditions as in Theorem \ref{ThmCont} are given in \citet[Proposition 8]{deb2020b} for the kernel partial correlation coefficient. In Theorem \ref{Cont.MainThm}, we establish general conditions on \((\XX_n,Y_n)\) to imply weak convergence of the associated Markov products. More precisely, under an additional continuity assumption on the conditional distributions \(Y_n\mid \XX_n = \xx,\) weak convergence of \((\XX_n,Y)\) to \((Y,\XX)\) implies weak convergence of the associated Markov products. \SF{Die zweite Hälfte steht schon in (i).}
\end{enumerate}
\end{remark}

\begin{proof}[Proof of Theorem \ref{ThmCont}]
Denote by \(F_n\) and \(F\) the distribution function of \(Y_n\) and \(Y\), respectively. 
Further, denote by \(F_n^-\) and \(F^-\) their left-continuous versions, i.e., \(F_n^-(x):=\lim_{y\uparrow x} F_n(y)\) and \(F^-(x):=\lim_{y\uparrow x} F(y)\) for all \(x\in \R.\) 
Since, by assumption \eqref{ThmCont2}, \((F_n\circ F_n^{-1})(t) \to (F\circ F^{-1})(t)\) for \(\lambda\)-almost all \(t\), we also obtain for the left-continuous versions that
\begin{align}\label{lcvvs}
    (F_n^-\circ F_n^{-1})(t) \to (F^-\circ F^{-1})(t) ~~~ \text{for } \lambda\text{-almost all } t\in (0,1);
\end{align}
see \citet[Lemma 2.17.(ii)]{Ansari-2021}.
For the positive constants \(a,b\) in Lemma \ref{DimRed.Prop} and for their counterparts \(a_n,b_n\) related to $\xi(Y_n,\XX_n)$, we obtain
\begin{align}\label{convgjg1}
    a_n \to a~~~ \text{and}~~~ b_n \to b \quad \text{as } n\to \infty
\end{align}
because
\begin{align*}
    a_n^{-1} 
    &= \int_\R \var\left(\1_{\{Y_n\geq y\}}\right) \de P^{Y_n}(y) 
     = \int_{(0,1)} \var\left(\1_{\{Y_n\geq F_n^{-1}(t)\}}\right) \de t \\ 
    &= \int_{(0,1)} \left[P\left(Y_n\geq F_n^{-1}(t)\right) - P\left(Y_n\geq F_n^{-1}(t)\right)^2\right] \de t\\
    &= \int_{(0,1)}\left[1-F_n^-\circ F_n^{-1}(t)\right]
       - [1-F_n^-\circ F_n^{-1}(t)]^2 \de t \\
    & \to \int_{(0,1)}\left[1-F^-\circ F^{-1}(t)\right]
       - [1-F^-\circ F^{-1}(t)]^2 \de t = \cdots = \int_\R \var\left(\1_{\{Y\geq y\}}\right) \de P^{Y}(y) = a^{-1}
\end{align*}
and
\begin{align*}
    \int_\R P(Y_n < y)^2 \de P^{Y_n}(y) 
    & = \int_{(0,1)} (F_n^-\circ F_n^{-1}(t))^2 \de t \\
    & \xrightarrow[n\to \infty]{}\int_{(0,1)} (F^-\circ F^{-1}(t))^2 \de t = \int_\R \lim_{z\uparrow y} P(Y < z)^2 \de P^{Y}(y),
\end{align*}
due to \eqref{lcvvs} and dominated convergence.
It remains to prove 
\begin{align}\label{convgjg2}
    \lim_{n\to \infty} \int_\R  P(Y_n < y, Y_n' < y) \de P^{Y_n}(y) 
    = \int_\R P(Y < y, Y' < y) \de P^{Y}(y).
\end{align}
Then the statement follows from
\begin{align*}
    \lim_{n\to \infty} \xi(Y_n,\XX_n) 
    &= \lim_{n\to \infty} \left(a_n \int_\R  P(Y_n < y, Y_n' < y) \de P^{Y_n}(y) - b_n\right) \\
     &= a \int_\R P(Y < y, Y' < y) \de P^{Y}(y) -b 
     = \xi(Y,\XX)
\end{align*}
applying \eqref{convgjg1} and \eqref{convgjg2} as well as the representation of \(\xi\) due to Lemma \ref{DimRed.Prop}.
\\
In order to prove \eqref{convgjg2}, we make use of the convergence \((Y_n,Y_n') \xrightarrow{~d~} (Y,Y')\) due to assumption \eqref{ThmCont1}.
To this end, we use Sklar's theorem (see e.g.~\cite{fdsempi2016} or \cite{Nelsen-2006}) to decompose the bivariate distribution function \(F_{(Y,Y')}\) into its univariate marginal distribution functions \(F_Y = F\) and \(F_{Y'} = F\) and a bivariate copula \(C\) such that
\begin{align}\label{eqsklar}
    F_{(Y,Y')}(y,y') = C(F(y),F(y')) \quad \text{for all } y,y'\in \R,
\end{align}
noting that a bivariate copula is a distribution function on \([0,1]^2\) with marginals that are uniform on \([0,1].\) The copula in \eqref{eqsklar} is uniquely determined on \(\Ran(F)\times \Ran(F),\) and, due to continuity of copulas, also on \(\overline{\Ran(F)}\times \overline{\Ran(F)}.\) By a similar decomposition for \((Y_n,Y_n')\) and by Portmanteau's lemma (see, e.g. \citet[Lemma 2.2]{vanderVaart-1998}), weak continuity of the Markov products means
\begin{align}\label{convcophgi}
    C_n(F_n(y),F_n(y')) 
    = F_{(Y_n,Y_n')}(y,y') 
    \xrightarrow[n\to \infty]{} F_{(Y,Y')}(y,y') 
    = C(F(y),F(y'))
\end{align}
for all continuity points \(y,y'\in \R\,\) of $F$, 
where \(C_n\) (and, similarly, for \(C\)) denotes the standard extension of the copula associated with \((Y_n,Y_n')\) from \(\overline{\Ran(F_n)}\times \overline{\Ran(F_n)}\) to \([0,1]^2\); see \cite{Neslehova2007}. 
For \((v,v')\) in the interior of \(\overline{\Ran(F)}\times \overline{\Ran(F)}\), \(y = F^{-1}(v)\) and \(y'=F^{-1}(v')\) are continuity points of \(F\) and
\begin{align}\label{convcophgi5}
    |C_n&(v,v') - C(v,v')| = |C_n(F(y),F(y')) - C(F(y),F(y'))| \\
    \label{convcophgi6} &\leq |C_n(F(y),F(y')) - C_n(F_n(y),F_n(y'))| + | C_n(F_n(y),F_n(y')) - C(F(y),F(y'))|.
\end{align}
The right-hand term in \eqref{convcophgi6} converges to \(0\) due to \eqref{convcophgi}. By Lipschitz-continuity of copulas (see, e.g.~\citet[Theorem 2.2.4]{Nelsen-2006}), the left-hand term in \eqref{convcophgi6} is upper bounded by \(|F(y)-F_n(y)| + |F(y')-F_n(y')|\), which converges to \(0\) because \(Y_n\xrightarrow[]{d} Y\).
%Since copulas are Lipschitz-continuous functions (see, e.g.~\citet[Theorem 2.2.4]{Nelsen-2006}), 
If \(v\) is an isolated point of \(\overline{\Ran(F)}\) and \(v'\) an inner point, choose \(y'\) as above and \(y = F^{-1}(v) + \varepsilon\) for \(\varepsilon >0\) small enough such that \(F(y) = v\). Then \(y\) is a continuity point of \(F\) and \(C_n(v,v')\) converges to \(C(v,v')\) with the same reasoning as in \eqref{convcophgi6}. The other cases follow similarly. Using Lipschitz-continuity of copulas, the convergence in \eqref{convcophgi5} to \(0\) holds true for all \(v,v'\in \overline{\Ran(F)}\). This implies
%noting that boundary points of \(\overline{\Ran(F)}\) can be handled with the first case using Lipschitz-continuity of copulas. In total, the convergence in \eqref{convcophgi5} to \(0\) for all \(v,v'\in \overline{\Ran(F)}\) implies 
\begin{align}\label{equnifconvuv}
    C_n \to C  ~~~\text{uniformly on } \overline{\Ran(F)}\times \overline{\Ran(F)},
\end{align}
using a variant of Arzel\`{a}-Ascoli's theorem due to \citet[Theorem 11.28]{Rudin-1987}.
%Since \(F_{(Y_n,Y_n')} (y,y) = C_n(F_n(y),F_n(y))\) and \(F_{(Y,Y')} (y,y) = C(F(y),F(y))\) for all \(y\in \R,\) we obtain for the left-continuous versions that
Consequently,
\begin{align*}
    P(Y_n < F_n^{-1}(t), Y_n' < F_n^{-1}(t)) 
    &= C_n(F_n^-\circ F_n^{-1}(t),F_n^-\circ F_n^{-1}(t))
    \\
    &\xrightarrow[n\to \infty]{} C(F^-\circ F^{-1}(t),F^-\circ F^{-1}(t)) 
     = P(Y < F^{-1}(t), Y' < F^{-1}(t)) 
\end{align*}
for \(\lambda\)-almost all \(t\in (0,1),\) where we use \eqref{equnifconvuv} and \eqref{lcvvs} as well as continuity of copulas for the convergence.
Finally, we obtain 
\begin{align*}
    \lim_{n\to \infty} \int_\R  & P(Y_n < y, Y_n' < y) \de P^{Y_n}(y) 
    = \int_{(0,1)} \lim_{n\to \infty} P(Y_n < F_n^{-1}(t), Y_n' < F_n^{-1}(t)) \de t\\
    &=  \int_{(0,1)} P(Y < F^{-1}(t), Y' < F^{-1}(t))  \de t 
    =  \int_\R P(Y < y, Y' < y) \de P^{Y}(y)
\end{align*}
applying dominated convergence. This proves \eqref{convgjg2}.
\end{proof}

In the following two examples, we show for bivariate random vectors that weak convergence of \((Y_n,X_n)\) is neither sufficient nor necessary for weak convergence of the associated Markov products.

\begin{example}[\((Y_n,X_n)\xrightarrow{~d~}(Y,X)\) does not imply \((Y_n,Y_n')\xrightarrow{~d~} (Y,Y')\)]\label{ex2nsuf}~\\
Consider the setting of Example \ref{Cont.ExWC}. Then, we have \((Y_n,X_n) \xrightarrow{~d~} (Y,X)\). However, \(Y\) and \(Y'\) are independent while, for each $n \in \mathbb{N}$, \(Y_n = Y_n'\) almost surely. Since \(Y_n,Y\) are uniform on \((0,1),\) we obtain that weak convergence of \((Y_n,X_n)\) to \((Y,X)\) is not sufficient for condition \eqref{ThmCont1} in Theorem \ref{ThmCont}; see also \citet[Theorem 5.2.10]{fdsempi2016}.
\end{example}

\begin{example}[\((Y_n,Y_n')\xrightarrow{~d~} (Y,Y')\) does not imply \((Y_n,X_n)\xrightarrow{~d~}(Y,X)\)]\label{ex2nnec}~\\
Let \(X\) be a non-degenerate random variable having a distribution that is symmetric around \(0\), for example \(X\) is standard normal. Consider \(Y =X\), \(X_n = X\), and \(Y_n = -X\) for all $n \in \mathbb{N}$.
Then, for all \(n\in \N,\) we have \((Y_n,X_n) = (-X,X) \not\eqd (X,X) = (Y,X)\),
but \(Y_n = Y_n'\) and \(Y = Y'\) almost surely by construction. Consequently, we have \((Y_n,Y_n') \eqd (Y,Y)\) due to symmetry of \(X.\) Hence, condition \eqref{ThmCont1} in Theorem \ref{ThmCont} is not sufficient for weak convergence of \((Y_n,X_n)\) to \((Y,X).\)
\end{example}

\section{Sufficient conditions for weak convergence of Markov products}\label{sec2}
\label{Sec.Cont.}

In this section, we provide various sufficient conditions for weak convergence of Markov products that yield continuity of \(\xi\) due to Theorem \ref{ThmCont}. The results in this section are also useful to obtain continuity of related dependence measures, as we show in Section \ref{secreldepm}. 
We begin with a main result of this paper that establishes convergence of the Markov products \((Y_n,Y_n')\) based on conditional weak convergence of \((Y_n,\XX_n).\) Then we show robustness of Markov products when incorporating noise into models. In the last part of this section, we provide a copula-based version of Theorem \ref{ThmCont} that yields continuity of \(\xi\) in the parameter of various copula families.

\subsection{Conditional weak convergence}

Chatterjee's rank correlation and related dependence measures are defined by comparing conditional distributions. Hence, studying continuity of these measures requires to study convergence of conditional distributions. To this end, we make use of the notion of conditional weak convergence for which we use the following concepts.

For \(d\in \N,\) denote by \(\cU(\R^d)\) a class of bounded, continuous, weak convergence-determining\footnote{A class \(\cU(\R^d)\) of measurable functions mapping from \(\R^d\) to \(\C\) is \emph{weak-convergence determining} if, for distributions \(\mu_n\), \(\mu\) on \(\R^d\), \(\int f \de\mu_n \to \int f \de \mu\) for all \(f\in \cU(\R^d)\) implies \(\mu_n\to \mu\) weakly.} functions mapping from \(\R^d\) to \(\C.\) 
%Denote by \(\xrightarrow{ d }\) convergence in distribution. 
A sequence \((f_n)_{n\in \N}\) of functions mapping from \(\R^d\) to \(\C\) is said to be \emph{asymptotically equicontinuous} on an open set \(V\subset \R^d,\) if for all \(\varepsilon>0\) and \(\xx \in V\) there exist \(\delta(\xx,\varepsilon)>0\) and \(n(\xx,\varepsilon)\in \N\) such that whenever \(|\xx'-\xx|\leq \delta(\xx,\varepsilon)\) then \(|f_n(\xx')-f_n(\xx)|<\varepsilon\) for all \(n > n(\xx,\varepsilon).\) Further, \((f_n)_{n\in \N}\) is said to be \emph{asymptotically uniformly equicontinuous} on \(V\) if it is asymptotically equicontinuous on \(V\) and the constants \(\delta(\varepsilon)=\delta(\xx,\varepsilon)\) and \(n(\varepsilon)=n(\xx,\varepsilon)\) do not depend on \(\xx.\)

The following main result is based on a characterization of conditional weak convergence by \cite{Sweeting_1989} and gives general sufficient conditions for weak convergence of Markov products.
Here, we assume that \(\XX_n\) and \(\XX\) have the same dimension.

\begin{theorem}[Conditional weak convergence] \label{Cont.MainThm}\label{Cont.MainLemma}~\\
Consider the \((1+p)\)-dimensional random vector $(Y,\XX)$ and a sequence $(Y_n,\XX_n)_{n\in \N}$ of \((1+p)\)-dimensional random vectors.
Let \(V\subset \R^p\) be open such that \(P(\XX\in V) =1\).
For $(Y_n')_{n \in \mathbb{N}}$ and $Y'$ defined as in equation \eqref{Assumption.DimR},
the following statements \eqref{Cont.MainThm1} and \eqref{Cont.MainThm2} are equivalent:
\begin{enumerate}[(1)]
\item \label{Cont.MainThm1}
  \begin{enumerate}[(i)]
  \item \label{Cont.MainLemma.1}\label{Cont.MainThm.1} \((Y_n,\XX_n)\xrightarrow{~d~} (Y,\XX),\) and
  \item \label{Cont.MainLemma.2}\label{Cont.MainThm.2} \((\E[u(Y_n)|\XX_n=\,\cdot\,])_{n\in \N}\) is asymptotically equicontinuous on \(V\) for all \(u\in \cU(\R),\) 
  \end{enumerate}
\item \label{Cont.MainThm2}
  \begin{enumerate}[(i)]
  \item \label{Cont.MainLemma.3} \((Y_n,Y_n',\XX_n) \xrightarrow{~d~} (Y,Y',\XX)\), and 
  \item \label{Cont.MainLemma.4} \((\E[u(Y_n,Y_n')|\XX_n=\,\cdot\,])_{n\in \N}\) is asymptotically equicontinuous on \(V\) for all \(u\in \cU(\R^2)\). 
  \end{enumerate}
\end{enumerate}
\end{theorem}

\begin{remark}\label{Cont.MainRem}
%\begin{enumerate}[(i)]
%\item 
Theorem \ref{Cont.MainThm} states that, under assumption \eqref{Cont.MainThm.2} on asymptotic equicontinuity of the conditional distributions, weak convergence of \((Y_n,\XX_n)\) implies weak convergence of the Markov products \((Y_n,Y_n').\) Hence, if also the ranges of \(F_{Y_n}\) converge, we obtain from Theorem \ref{ThmCont} that \(\xi(Y_n,\XX_n)\) converges. These conditions can be verified for many models as we show in Section \ref{Sec.Cont.PM.Ellipt.} within the class of elliptical distributions.
%\item 
%\end{enumerate}
\end{remark}

\begin{proof}[Proof of Theorem \ref{Cont.MainThm}.]
For an arbitrary sequence \((g_n)_{n\in \N}\) of complex functions on a metric space \(S,\) denote by \(g_n\xrightarrow{~u~} g,\) \(g\) a continuous function, the convergence \(g_n(s)\to g(s)\) uniformly on compact subsets of \(S.\)
Further, denote by \(\cC_b(\R^d)\) the class of all bounded continuous functions on \(\R^d\).

Assume \eqref{Cont.MainThm1}.
Due to the characterization of uniform conditional convergence in \citet[Theorem 4]{Sweeting_1989}, conditions \eqref{Cont.MainLemma.1} and \eqref{Cont.MainLemma.2} are equivalent to
\begin{align}\label{eqlemconvT1}
    \int_\R f(y) \de P^{Y_n|\XX_n=\xx}(y) &\xrightarrow{~u~} \int_\R f(y) \de P^{Y|\XX=\xx}(y) ~~~\text{on } V \text{ for all }f \in \cC_b(\R),~\text{and}\\
    \label{eqlemconvT2} \XX_n &\xrightarrow{~d~} \XX.
\end{align}
This implies for the characteristic functions of the conditional distribution \((Y_n,Y_n')|\XX_n=\xx\) that
\begin{align}\label{eqlemconvT19}
\begin{split}
    & \left| \varphi_{(Y_n,Y_n')|\XX_n=\xx}(t,t')-\varphi_{(Y,Y')|\XX=\xx}(t,t')\right|
    = \left|\varphi_{Y_n|\XX_n=\xx}(t) \, \varphi_{Y_n'|\XX_n=\xx}(t')-\varphi_{Y|\XX=\xx}(t) \, \varphi_{Y'|\XX=\xx}(t')\right|
    \\
    &= \left|\varphi_{Y_n|\XX_n=\xx}(t) \, \varphi_{Y_n|\XX_n=\xx}(t')-\varphi_{Y|\XX=\xx}(t)\varphi_{Y|\XX=\xx}(t')\right|
    \\
    &\leq \left| \varphi_{Y_n|\XX_n=\xx}(t)-\varphi_{Y|\XX=\xx}(t)\right|\left|\varphi_{Y_n|\XX_n=\xx}(t')\right|+ \left| \varphi_{Y_n|\XX_n=\xx}(t')-\varphi_{Y|\XX=\xx}(t')\right| 
    \left|\varphi_{Y|\XX=\xx}(t)\right|
    \\
    &\leq \left| \varphi_{Y_n|\XX_n=\xx}(t)-\varphi_{Y|\XX=\xx}(t)\right| + \left| \varphi_{Y_n|\XX_n=\xx}(t')-\varphi_{Y|\XX=\xx}(t')\right|
    %\nonumber 
    %\label{Uconvs}&\xrightarrow[]{~u~} 0
\end{split}
\end{align}
for all \(\xx \in V\) and for all \(t,t'\in \R\), where we use for the equality that \(Y_n'\) is a conditionally independent copy of \(Y_n\) given \(\XX_n\), and, similarly, for \(Y'\) and \(Y\) under \(\XX\).  The last inequality holds true since absolute values of characteristic functions are upper bounded by \(1\).
Since \(\varphi_{Y_n|\XX_n=\xx} \xrightarrow[]{u} \varphi_{Y|\XX=\xx}\) by 
 \eqref{eqlemconvT1}, the limiting conditional characteristic function \(\varphi_{(Y,Y')|\XX=\xx}\) is continuous in the conditioning variable, i.e.,
\begin{align}\label{setur7}
    \xx_n\to \xx ~~~ \Longrightarrow ~~~ \varphi_{(Y,Y')|\XX=\xx_n}(t,t') \xrightarrow[n\to \infty]{} \varphi_{(Y,Y')|\XX=\xx}(t,t') ~~~\text{for all } t,t'\in \R.
\end{align}
Further, %by \eqref{eqlemconvT19},
\((\varphi_{(Y_n,Y_n')|\XX_n=\cdot}(t,t'))_n\) continuously converges to \(\varphi_{(Y,Y')|\XX=\cdot}(t,t')\) for all \(t,t' \in \R,\) i.e.,
\begin{align}\label{setur6}
    \xx_n\to \xx ~~~ \Longrightarrow ~~~ \varphi_{(Y_n,Y_n')|\XX_n=\xx_n}(t,t')\xrightarrow[n\to \infty]{} \varphi_{(Y,Y')|\XX=\xx}(t,t')~~~\text{for all } t,t'\in \R.
\end{align}
%Due to \eqref{Uconvs}, we obtain
%\begin{align}\label{setur1}
%   \sup_{\xx\in I}\left|\varphi_{(Y_n,Y_n')|\XX_n=\xx}(t,t')-\varphi_{(Y,Y')|\XX=\xx}(t,t')\right| \longrightarrow 0
%\end{align}
%for all compact sets \(I\subset V.\) 
%Since, by assumption, \((\varphi_{Y_n|\XX_n=\xx}(t))_n\) is asymptotically equicontinuous on \(V\) for all \(t\) 
%%and since by \eqref{eqlemconvT1} \(\varphi_{Y_n|\XX_n=\xx}(t)\xrightarrow{~u~} \varphi_{Y|\XX=\xx}(t),\) we obtain with \eqref{Uconvs} that 
%Now, observe that \citet[Condition (2.5)]{Sethuraman-1961} coincides with \eqref{setur6} and \citet[Condition (2.7)]{Sethuraman-1961} coincides with \eqref{setur7}. 
%Noting that  \citet[Lemma 3 \& Lemma 4]{Sethuraman-1961} can alternatively be proven using \citet[Conditions (2.5) \& (2.7)]{Sethuraman-1961} \JA{Lemma 3 und 4 werden doch mit (2.5) und (2.7) bewiesen, oder?} instead of \citet[Conditions (2.2) \& (2.3)]{Sethuraman-1961}, 
%It follows that
Now, applying \citet[Lemma 3 \& Lemma 4]{Sethuraman-1961}, we obtain
\begin{align}\label{eqlemconvT1.2}
    \int_{\R^2} f(y,y') \de P^{(Y_n,Y_n')|\XX_n=\xx}(y,y') 
    &\xrightarrow{~u~} \int_{\R^2} f(y,y') \de P^{(Y,Y')|\XX=\xx}(y,y') ~ \text{on } V \text{ for all }f \in \cC_b(\R^2).
\end{align}
Note that \citet[Lemma 3 \& Lemma 4]{Sethuraman-1961} are proven using \citet[(2.5) \& (2.7)]{Sethuraman-1961} where \citet[(2.5)]{Sethuraman-1961} coincides with \eqref{setur6} and \citet[(2.7)]{Sethuraman-1961} coincides with \eqref{setur7}.
Using again the characterization of uniform conditional convergence in \citet[Theorem 4]{Sweeting_1989} together with \eqref{eqlemconvT2} finally yields conditions \eqref{Cont.MainLemma.3} and \eqref{Cont.MainLemma.4}. 
The reverse direction is trivial.
\end{proof}

\subsection{Noise resistance} \label{Subsec.Noise}

We now establish weak continuity of the Markov product and continuity of \(\xi\) in the noise of an additive error model that generalizes the setting in \eqref{eqadderrmod}.
%\JA{Verallgemeinern auf 'weakly continuous in the noise \(\varepsilon\)}
%associated with \SF{a generalization of the additive error model in \eqref{eqadderrmod}} and hence obtain, as a practical application of Theorem \ref{ThmCont}, continuity of \(\xi\) in the parameter \(\sigma.\)
%As a practical application of Theorem \ref{ThmCont},
%we now prove robustness of \(\xi\) in the parameter \(\sigma\) of the additive error model \eqref{eqadderrmod}. 

\begin{proposition}[Additive error model]\label{propadderrmod}~\\
Consider the model \(Y = f(\XX) + \varepsilon\). Assume that \(f(\XX)\) and the noise \(\varepsilon\) are independent and that \(f(\XX)\) has a continuous distribution function. 
 Then the Markov product \((Y,Y')\) of \((Y,f(\XX))\) is weakly continuous and \(\xi(Y,\XX)\) is continuous with respect to weak convergence in \(\varepsilon.\)
\end{proposition}

\begin{proof} 
    Let \(\varepsilon_n\) be a sequence of random variables with \(\varepsilon_n\xrightarrow{d} \varepsilon\) and let \(\varepsilon_n'\) and \(\varepsilon'\) be copies of \(\varepsilon_n\) resp. \(\varepsilon\) such that \(f(\XX),\varepsilon,\varepsilon',\varepsilon_n,\varepsilon_n'\) are independent. 
    Consider the sequences \((Y_n,X_n)_{n\in \N}\) and \((Y_n',X_n)_{n\in \N}\) of bivariate random vectors defined by \(Y_n = f(\XX) + \varepsilon_n,\) \(Y_n' = f(\XX) +\varepsilon_n',\) and \(X_n = f(\XX)\).
    Then \(Y_n\) and \(Y_n'\) are conditionally i.i.d given \(f(\XX).\) 
    By independence of \(f(\XX), \varepsilon_n,\varepsilon_n',\varepsilon,\varepsilon'\), we have
    \begin{align*}
        \E h(Y_n,Y_n') %= \int_\R \E[h(f(\XX)+\varepsilon,f(\XX) + \varepsilon_n')\mid f(\XX) = z] \de P^{f(\XX)}(z) 
        = \int_\R \E h(z+\varepsilon_n,z+\varepsilon_n') \de P^{f(\XX)}(z) \to \int_\R \E h(z+\varepsilon,z+\varepsilon') \de P^{f(\XX)}(z) = \E h(Y,Y')
    \end{align*}
    for all continuous and bounded functions \(h\). Hence, 
    \((Y_n,Y_n')\) converges weakly to \((Y,Y')\eqd(f(\XX)+\varepsilon, f(\XX)+\varepsilon')\), and thus \((Y,Y')\) is weakly continuous in \(\varepsilon.\) 
    Further, since \(f(\XX)\) has a continuous distribution function, all of \(Y_n,Y_n',Y,Y'\) have a continuous distribution function. Hence, \((F_{Y_n}\circ F_{Y_n}^{-1})(t) = t = (F_Y\circ F_Y^{-1})(t)\) for all \(n\) and \(t\in (0,1).\) Then, continuity of \(\xi(Y,f(\XX))\) in \(\varepsilon\) follows from Theorem \ref{ThmCont}.
    Finally, since \(Y\) and \(\XX\) are conditionally independent given \(f(\XX)\), self-equitability of \(\xi\) \citet[Corollary 2.3]{ansari2023MFOCI} implies \(\xi(Y,\XX) = \xi(Y,f(\XX))\) and hence continuity of \(\xi(Y,\XX)\) in \(\varepsilon\).
\end{proof}

The following result ensures that a certain level of noise present in the data does not cause the Markov product to deviate too much.

\begin{proposition}[Robustness against small perturbations of the response]~~\label{RobustY}\\
For a \((1+p)\)-dimensional random vector \(( Y,\XX)\) and a sequence \(( \varepsilon_n)_{n\in\mathbb{N}}\) of random variables, consider the perturbations \(Y_n := Y + \varepsilon_n.\) 
If
\begin{enumerate}[(i)]
\item \label{RobustY.A1} \((\varepsilon_n\mid \XX = \xx ) \xrightarrow{d} 0\) for \(P^\XX\)-almost all \(\xx\) and,
\item \label{RobustY.A2} for every $n \in \N$, $Y$ and $\varepsilon_n$ are conditionally independent given $\XX$,
%%% Revision#1
%\item\label{RobustY.A1} \(\varepsilon_n \xrightarrow{d} 0\) and,
%\item\label{RobustY.A2} for every $n \in \N$, $\varepsilon_n$ is independent from $(Y,\XX)$,
%%%
%the mapping \(V \to \C, \xx \mapsto \varphi_{Y|\XX=\xx}\) is continuous, where $\varphi_{Y|\XX=\xx}$ denotes the characteristic function of the conditional distribution $Y|\XX=\xx$,
\end{enumerate}
then \((Y_n,Y_n') \xrightarrow{d} (Y,Y')\) where $Y'$ and, similarly, $Y_n'$ are defined as in \eqref{Assumption.DimR}.
%\SF{If additionally, condition \eqref{ThmCont2} in Theorem \ref{ThmCont} is fulfilled, then \(\lim_{n \to \infty} \xi(Y_n,\XX_n) = \xi(Y,\XX).\)}
\end{proposition}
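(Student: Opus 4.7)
The plan is to exhibit an explicit realization of the Markov product $(Y_n,Y_n')$ that exposes the additive noise structure, and then invoke a Slutsky-type argument. First I would extend the underlying probability space so that it supports, in addition to the conditionally independent copy $Y'$ of $Y$ given $\XX$ from \eqref{Assumption.DimR}, a random variable $\epsilon_n'$ with $\epsilon_n' \eqd \epsilon_n$ and $\epsilon_n' \perp (\XX,Y,Y',\epsilon_n)$. Setting $\widehat Y_n := Y+\epsilon_n$ and $\widehat Y_n' := Y'+\epsilon_n'$, I would verify that $(\widehat Y_n,\widehat Y_n')$ realizes the Markov product of $(\XX,\widehat Y_n)$: conditional on $\XX=\xx$, the conditional independence $Y\perp Y' \mid \XX$ combined with the unconditional independence of $(\epsilon_n,\epsilon_n')$ from $(\XX,Y,Y')$ renders $Y,Y',\epsilon_n,\epsilon_n'$ mutually independent, so $\widehat Y_n \perp \widehat Y_n' \mid \XX$, and both conditional laws equal the convolution $P^{Y\mid \XX=\xx}\ast P^{\epsilon_n}$. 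Since $(\XX,\widehat Y_n)\eqd (\XX,Y_n)$ by \eqref{RobustY.A2}, uniqueness in distribution of the Markov product yields $(\widehat Y_n,\widehat Y_n') \eqd (Y_n,Y_n')$.

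Second, I would use that weak convergence to a constant is equivalent to convergence in probability, so assumption \eqref{RobustY.A1} gives $\epsilon_n \xrightarrow{P} 0$; since $\epsilon_n'\eqd \epsilon_n$, likewise $\epsilon_n'\xrightarrow{P} 0$, and a union bound yields $(\epsilon_n,\epsilon_n')\xrightarrow{P}(0,0)$. Consequently
\begin{align*}
(\widehat Y_n,\widehat Y_n') \;=\; (Y+\epsilon_n,\, Y'+\epsilon_n') \;\xrightarrow{P}\; (Y,Y'),
\end{align*}
so $(\widehat Y_n,\widehat Y_n')\xrightarrow{d} (Y,Y')$, which together with the distributional identity $(\widehat Y_n,\widehat Y_n') \eqd (Y_n,Y_n')$ established above yields the claim $(Y_n,Y_n') \xrightarrow{d} (Y,Y')$.

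The only nontrivial step is the first one, namely justifying that $(Y+\epsilon_n,Y'+\epsilon_n')$ is a legitimate realization of the Markov product associated with $(\XX,Y_n)$; the subtlety there is that $Y'$ is defined relative to the unperturbed law and one has to check that adding an independent noise term $\epsilon_n'$ both preserves the correct conditional marginal of $Y_n$ and the conditional independence $Y_n\perp Y_n' \mid \XX$. Once this representation is in place, the remainder of the argument is essentially a one-line application of Slutsky's theorem (or, equivalently, of the fact that convergence in probability implies convergence in distribution).
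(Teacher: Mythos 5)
Your proof is correct, but it follows a genuinely different route from the paper's. The paper argues analytically: it computes the conditional characteristic function of \((Y_n,Y_n')\) given \(\XX=\xx\), factors it as \(\varphi_{(Y,Y')\mid\XX=\xx}(t,t')\,\varphi_{\epsilon_n}(t)\,\varphi_{\epsilon_n}(t')\) using conditional independence and \(\epsilon_n\perp(\XX,Y)\), lets the noise factors tend to \(1\) via \(\epsilon_n\xrightarrow{d}0\), and then integrates over \(P^{\XX}\) to conclude by L\'evy continuity --- the same characteristic-function machinery used in the proof of Theorem \ref{Cont.MainThm}. You instead build an explicit coupling \((\widehat Y_n,\widehat Y_n')=(Y+\epsilon_n,\,Y'+\epsilon_n')\), check that it satisfies the defining properties \eqref{Assumption.DimR} of the Markov product of \((\XX,Y_n)\) (same conditional law, namely the convolution \(P^{Y\mid\XX=\xx}\conv P^{\epsilon_n}\), and conditional independence given \(\XX\)), and finish with convergence in probability plus Slutsky. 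Both arguments are valid; yours is more elementary and avoids the implicit dominated-convergence step when passing from conditional to unconditional characteristic functions, while the paper's stays within the framework it reuses elsewhere. One point worth making explicit in your write-up: the hypothesis only gives \(\epsilon_n\perp(\XX,Y)\), whereas your construction uses \(\epsilon_n\perp(\XX,Y,Y')\); this is harmless because the law of \((Y_n,Y_n')\) is a functional of the law of \((\XX,Y_n)\) alone, so you are free to build a version on a fresh product space in which \((\epsilon_n,\epsilon_n')\) is independent of \((\XX,Y,Y')\) without altering any of the distributions involved --- but the sentence "uniqueness in distribution of the Markov product" is doing exactly this work and deserves a line of justification.
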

\begin{proof}
Due to \eqref{Assumption.DimR} and since $\varepsilon_n$ and $Y$ are conditionally independent given \(\XX\), we have for \(P^\XX\)-almost all  $\xx, \xx'$ and for all $(t,t') \in \R^2$ that
\begin{align*}%\label{RobustY.Eq1}
    \varphi_{(Y_n,Y_n') \mid \XX=\xx} (t,t')
    & = \varphi_{Y_n \mid \XX=\xx} (t) \, \varphi_{Y_n' \mid \XX=\xx} (t')
    = \varphi_{Y + \varepsilon_n \mid \XX=\xx} (t) \, \varphi_{Y+\varepsilon_n \mid \XX=\xx} (t') \notag
    \\
    & = \varphi_{Y \mid \XX=\xx} (t) \, \varphi_{\varepsilon_n \mid \XX=\xx} (t) \, \varphi_{Y \mid \XX=\xx} (t') \, \varphi_{\varepsilon_n \mid \XX=\xx} (t') \notag
    \\
    & = \varphi_{(Y,Y') \mid \XX=\xx} (t,t') \, \varphi_{\varepsilon_n \mid \XX=\xx} (t) \, \varphi_{\varepsilon_n \mid \XX=\xx} (t') \notag
    %\\
    %& = \varphi_{(Y,Y') \mid \XX=\xx} (t,t') \, \varphi_{\varepsilon_n} (t) \, \varphi_{\varepsilon_n} (t') 
    \xrightarrow[n\to \infty]{} \varphi_{(Y,Y') \mid \XX=\xx} (t,t'), \notag
\end{align*}
where 
convergence follows from \((\varepsilon_n\mid \XX = \xx ) \xrightarrow{d} 0\).
Integrating over $P^\XX$ yields \((Y_n,Y_n') \xrightarrow{d} (Y,Y')\).
\end{proof}
Proposition \ref{RobustY} together with Theorem \ref{ThmCont} guarantees robustness of \(\xi\) against slight misspecifications of the model. 
Concretely, when the assumed model is \(Y = f(\XX) + \varepsilon\) while the true data-generating process is \(Y = f(\XX) + \gamma \, g(\XX) + \varepsilon\) with \(\gamma\) close to \(0\), \(\xi\) under the simpler model will remain close to its true value.
%In order to meet \eqref{ThmCont2} of Theorem \ref{ThmCont}, we require that either \(f(\XX)\) or \(\varepsilon\) has a continuous distribution function.

\subsection{\(\partial_2\)-convergence of copulas}

In this subsection, we focus on weak continuity of Markov products based on convergence criteria for the underlying copulas. In particular, we obtain for various parametric copula families simple criteria to verify weak continuity of their Markov products in the underlying copula parameters. 
For simplicity, we study only the bivariate case. 
In analogy to the Markov product for random vectors defined at the beginning of Section \ref{secxi}, the Markov product of a bivariate copula \(D\) is defined by 
\begin{align}\label{defcopprod}
    D\conv D(u,v) := \int_0^1 \partial_2 D(u,t) \partial_2 D(v,t) \de t \quad \text{for } u,v\in [0,1],
\end{align}
where \(\partial_2\) denotes the partial derivative operator with respect to the second component. Note that, for fixed \(u\in [0,1],\) the partial derivative \(\partial_2 D(u,t)\) exists for \(\lambda\)-almost all \(t\in (0,1)\); see \citet[Theorem 2.2.7]{Nelsen-2006}.
If \(D\) is a copula of \((Y,X)\) and if \(F_X\) is continuous, then \(D\conv D\) is a copula of the Markov product \((Y,Y')\); see e.g. \citet[Section 3]{darsow-1992}.
%Recall that, by \eqref{eqsklar}, every bivariate distribution function can be decomposed into a concatenation of its marginal distribution functions and a bivariate copula.

As a consequence of Example \ref{ex2nsuf}, weak continuity of \(D\) is not sufficient for weak continuity of \(D\conv D.\) 
%Therefore, we need to use a stronger mode of convergence guaranteeing also convergence of the partial copula derivatives.
To ensure weak convergence of Markov products for copulas, we make use of the concept of \(\partial_2\)-convergence, defined for bivariate copulas \((D_n)_{n\in \N}\) and \(D\) by
\begin{align}
    D_n \xrightarrow{~\partial_2 ~} D \quad \Longleftrightarrow \quad \int_0^1 \left|\partial_2 D_n(v,t) - \partial_2 D(v,t) \right| \de t \to 0 \quad \text{for all } v\in [0,1];
\end{align}
see \citet[Definition 2.21 and Theorem 2.23]{Ansari-2021}. Note that \(\partial_2\)-convergence implies weak convergence of copulas, and it is a weaker concept than \(\partial\)-convergence considered by \cite{Mikusinski-2010}.

The following main result is a copula-based version of Theorem \ref{ThmCont} for bivariate random vectors. It states that range convergence of the marginals and \(\partial_2\)-convergence of the copulas of \((Y_n,X_n)\) to the copula of \((Y,X)\) implies convergence of Chatterjee's rank correlation. Note that we do not require weak convergence of \(Y_n\) to \(Y\) or \(X_n\) to \(X\).

\begin{theorem}[\(\partial_2\)-convergence and range convergence]\label{thmdel}~\\
    For \(n\in \N,\) let \((Y_n,X_n)\) and \((Y,X)\) be bivariate random vectors. Assume that
    \begin{enumerate}[(i)]
        \item \label{thmdel1} \(C_{Y_n,X_n} \xrightarrow{\partial_2} C_{Y,X},\)
        \item \label{thmdel2} \(F_{X_n} \circ F_{X_n}^{-1}(t) \to F_{X} \circ F_{X}^{-1}(t)\) for  \(\lambda\)-almost all \(t\in (0,1).\)
        \end{enumerate}
        Then we have \(C_{Y_n,Y_n'} \to C_{Y,Y'}\) uniformly on \(\overline{\Ran(F_Y)}\times \overline{\Ran(F_Y)}.\) If additionally
    \begin{enumerate}[(i)]\setcounter{enumi}{2}
        \item \label{thmdel3} \(F_{Y_n} \circ F_{Y_n}^{-1}(t) \to F_{Y} \circ F_{Y}^{-1}(t)\) for \(\lambda\)-almost all \(t\in (0,1),\)
    \end{enumerate}
        it follows that \(\xi(Y_n,X_n) \to \xi(Y,X).\)
\end{theorem}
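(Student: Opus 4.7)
The plan is to split the theorem into two parts: first establish uniform convergence of the Markov-product copulas $C_{Y_n,Y_n'} \to C_{Y,Y'}$ on $[0,1]^2$ using hypotheses \eqref{thmdel1}--\eqref{thmdel2}, and then combine this with hypothesis \eqref{thmdel3} to obtain $\xi(Y_n,X_n) \to \xi(Y,X)$ by essentially replaying the argument from \eqref{convgjg2} in the proof of Theorem \ref{ThmCont}.

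For the first part, I would begin by writing down a generalized Markov product identity that is valid even when $F_X$ is not continuous. Using the version $P(Y\le y\mid X=x)=\partial_1 C_{X,Y}(F_X(x),F_Y(y))$ together with the conditional i.i.d.\ property \eqref{Assumption.DimR} and the substitution $X\eqd F_X^{-1}(U)$ for $U\sim\mathrm{Unif}(0,1)$, conditioning on $X$ yields
\begin{align*}
P(Y\le y,\,Y'\le y')
= \int_0^1 \partial_1 C_{X,Y}\bigl(G(t),F_Y(y)\bigr)\,\partial_1 C_{X,Y}\bigl(G(t),F_Y(y')\bigr)\,\de t,
\end{align*}
with $G(t):=F_X\circ F_X^{-1}(t)$, and an analogous identity holds for $(Y_n,Y_n')$ with $G_n(t):=F_{X_n}\circ F_{X_n}^{-1}(t)$; when $F_X$ is continuous, $G=\mathrm{id}$ and this reduces to a marginal transformation of \eqref{defcopprod}. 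The core estimate is then to show for each $u,v\in[0,1]$ that
\begin{align*}
\int_0^1 \partial_1 C_{X_n,Y_n}(G_n(t),u)\,\partial_1 C_{X_n,Y_n}(G_n(t),v)\,\de t \;\longrightarrow\; \int_0^1 \partial_1 C_{X,Y}(G(t),u)\,\partial_1 C_{X,Y}(G(t),v)\,\de t,
\end{align*}
which I would do by splitting the difference into a \emph{copula-replacement} term controlled by $\partial_1$-convergence (hypothesis \eqref{thmdel1}) and an \emph{argument-replacement} term controlled by the a.e.\ convergence $G_n\to G$ (hypothesis \eqref{thmdel2}), in both cases exploiting the uniform bound $|\partial_1 C|\le 1$ and dominated convergence. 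Via Sklar's theorem this gives pointwise convergence $C_{Y_n,Y_n'}(u,v)\to C_{Y,Y'}(u,v)$ on $\overline{\Ran(F_Y)}^{\,2}$, and the $1$-Lipschitz property of copulas together with Arzelà--Ascoli upgrades this to uniform convergence on $[0,1]^2$, mirroring the step from \eqref{convcophgi} to \eqref{equnifconvuv}.

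For the second part, I would replay the argument from \eqref{convgjg2} onward in the proof of Theorem \ref{ThmCont}: hypothesis \eqref{thmdel3} alone forces the normalizing constants $a_n\to a$ and $b_n\to b$ and yields $F_{Y_n}^-\circ F_{Y_n}^{-1}\to F_Y^-\circ F_Y^{-1}$ a.e.\ by \cite[Lemma~2.17(ii)]{Ansari-2021}; uniform convergence of the Markov-product copulas together with continuity of $C_{Y,Y'}$ then delivers pointwise a.e.\ convergence of the integrands appearing in \eqref{convgjg2}, and dominated convergence combined with Lemma \ref{DimRed.Prop} gives $\xi(Y_n,X_n)\to\xi(Y,X)$.

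The main obstacle is the copula-replacement step: hypothesis \eqref{thmdel1} only guarantees $L^1$-convergence of $\partial_1 C_{X_n,Y_n}(\cdot,u)\to\partial_1 C_{X,Y}(\cdot,u)$ in the first argument, whereas the generalized Markov product forces an evaluation of these derivatives along the moving range transforms $G_n(t)$, which need not be continuous and whose image sits in $\overline{\Ran(F_X)}$. Transferring the $L^1$ bound across this varying change of variables therefore requires a careful Scheffé- or Vitali-type argument that exploits both the monotonicity and the uniform boundedness of $u\mapsto\partial_1 C(t,u)$.
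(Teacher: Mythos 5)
Your second half (deducing $\xi(Y_n,X_n)\to\xi(Y,X)$ from uniform convergence of $C_{Y_n,Y_n'}$ plus \eqref{thmdel3} by rerunning the computation behind \eqref{convgjg2}) is sound and essentially matches the paper, which routes through the rank transforms $Z_n=F_{Y_n}(Y_n)$ and Theorem \ref{ThmCont}. The gap is in your first half, and it is exactly at the point the theorem is designed to handle. Your starting identity
\begin{align*}
P(Y\le y,\,Y'\le y')=\int_0^1 \partial_1 C_{X,Y}\bigl(G(t),F_Y(y)\bigr)\,\partial_1 C_{X,Y}\bigl(G(t),F_Y(y')\bigr)\de t,\qquad G=F_X\circ F_X^{-1},
\end{align*}
is false when $F_X$ has atoms, because $P(Y\le y\mid X=x)=\partial_1 C_{X,Y}(F_X(x),F_Y(y))$ fails there: at an atom $x_0$ with $a=F_X(x_0^-)<F_X(x_0)=b$, the conditional distribution is the difference quotient $\bigl(C(b,v)-C(a,v)\bigr)/(b-a)$, not the ordinary derivative $\partial_1 C(b,v)$ evaluated at $G(t)=b$. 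Concretely, for $C=M$ and $a<v<b$ the two differ ($ (v-a)/(b-a)$ versus $0$), and your formula would then produce the wrong Markov product precisely in situations like Example \ref{exrangconvX}, where $F_X$ is degenerate. The correct object is the generalized partial difference operator $\partial_1^{F_{X_n}}$ and the generalized product $C_{X_n,Y_n}\conv_{F_{X_n}}C_{X_n,Y_n}$ of \eqref{defgencopprod}; as Remark \ref{remfre}(b) stresses, the ordinary $\conv$-product \eqref{defcopprod} (and hence any formula built from ordinary $\partial_1$ along $G_n$) is only valid for continuous $F_{X_n}$.

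Even granting the corrected identity, the heart of the argument --- passing to the limit in $\int_0^1 \partial_1^{F_{X_n}}C_{X_n,Y_n}(G_n(t),u)\,\partial_1^{F_{X_n}}C_{X_n,Y_n}(G_n(t),v)\de t$ using \eqref{thmdel1} and \eqref{thmdel2} --- is not carried out in your proposal; you flag it yourself as the ``main obstacle,'' and the sketched remedy (monotonicity of $u\mapsto\partial_1 C(t,u)$) concerns the wrong variable, since the difficulty sits in the first argument, where $\partial_1 C_{X,Y}(\cdot,u)$ is merely an a.e.-defined $L^1$-limit and the maps $G_n$ push Lebesgue measure onto $\overline{\Ran(F_{X_n})}$, possibly a null set. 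The paper closes exactly this step by invoking \citet[Theorem 2.23]{Ansari-2021} on continuity of generalized $\conv_{F}$-products under $\partial_1$-convergence and range convergence; without that result (or a proof of an equivalent statement), your argument does not go through. A minor additional point: pointwise convergence plus Lipschitz continuity yields uniform convergence of $C_{Y_n,Y_n'}$ on $\overline{\Ran(F_Y)}\times\overline{\Ran(F_Y)}$, which is what the subsequent $\xi$-argument needs, not on all of $[0,1]^2$ as you claim.
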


\begin{proof}
%Denote by \(F^-\) the left-continuous version of a distribution function \(F.\)
    Define the rank transformed random variables \(Z_n := F_{Y_n}(Y_n)\) and \(Z := F_Y(Y).\) 
    Since Chatterjee's rank correlation is invariant under distributional transformations, we have \(\xi(Y_n,X_n) = \xi(Z_n,X_n)\) and \(\xi(Y,X) = \xi(Z,X)\); see \citet[Proposition 2.4]{ansari2023MFOCI}. %, which also applies to the left-continuous version of the associated distribution functions. 
    We show that
    \begin{align}
        \label{eqthmdel1}(Z_n,Z_n') &\xrightarrow{~d~} (Z,Z') \\
        \label{eqthmdel2}\text{and} \quad F_{Z_n}\circ F_{Z_n}^{-1}(t) &\xrightarrow{~\phantom{d}~} F_Z \circ F_Z^{-1}(t) \quad \text{for } \lambda\text{-almost all } t\in (0,1).
    \end{align}
    Then, the convergence of \(\xi(Y_n,X_n)\) to \(\xi(Y,X)\) follows from Theorem \ref{ThmCont}. The convergence in \eqref{eqthmdel2} is a consequence of assumption \eqref{thmdel3} using that \(\overline{\Ran(F_{Y_n})} = \overline{\Ran(F_{Z_n})}\) and  \(\overline{\Ran(F_Y)} = \overline{\Ran(F_Z)}.\) 
    For the proof of \eqref{eqthmdel1}, we first show weak convergence of the marginals. Therefore, define \(Z_n^* := (F_{Y_n}\circ F_{Y_n}^{-1})(V)\) and \(Z^* := (F_{Y}\circ F_{Y}^{-1})(V)\) for some random variable \(V\) that is uniformly distributed on \((0,1).\) Then \(Z_n^* \eqd Z_n\) and \(Z^*\eqd Z.\) 
    Using assumption \eqref{thmdel3}, \(Z_n^*\) converges to \(Z^*\) almost surely and, thus, we obtain \(Z_n\xrightarrow{~d~} Z.\)\\
    To prove convergence of the bivariate random vectors in \eqref{eqthmdel1}, 
    note that the copula of \((Y_n,Y_n')\) can be written as the conditional independence product of the copula \(C_{Y_n,X_n}\) with itself and with respect to the distribution function \(F_{X_n}\), as defined in \citet[Definition 2.5(i)]{Ansari-2021}, i.e.,
    \begin{align}\label{defgencopprod}
        C_{Y_n,Y_n'}(u,u') = C_{Y_n,X_n}\conv_{F_{X_n}} C_{Y_n,X_n}(u,u') = \int_0^1 \partial_2^{F_{X_n}} C_{Y_n,X_n}(t,u) \, \partial_2^{F_{X_n}} C_{Y_n,X_n}(t,u') \de t,
    \end{align}
    for \(u,u'\in [0,1],\)
    where \(\partial_2^{G}\) denotes a generalized partial derivative operator with respect to the second component and with respect to a distribution function \(G\); see \citet[Equation (2)]{Ansari-2021}.
    Then, due to \citet[Theorem 2.23]{Ansari-2021}, \(C_{Y_n,Y_n'}\) converges to the conditional independence product \(C_{Y,Y'} = C_{Y,X}\conv_{F_X} C_{Y,X}\), where we use assumptions \eqref{thmdel1} and \eqref{thmdel2}. This implies
    \begin{align}\label{eqthmdel3x}
        C_{Y_n,Y_n'} \xrightarrow{~\phantom{d}~} C_{Y,Y'} \quad \text{pointwise on } \overline{\Ran(F_Y)}\times \overline{\Ran(F_Y)}.
    \end{align}
    The convergence is also uniform by a similar reasoning to \eqref{equnifconvuv}.
    Now, consider \(z,z'\) such that \(F_Z\) is continuous at \(z\) and \(z'\). Then we have
    \begin{align}
        \label{eqthmdel5s} |F_{Z_n,Z_n'}(z,z') - F_{Z,Z'}(z,z')| &= | C_{Y_n,Y_n'}(F_{Z_n}(z),F_{Z_n}(z')) - C_{Y,Y'}(F_{Z}(z),F_{Z}(z')) | \\
        \label{eqthmdel5} &\leq | C_{Y_n,Y_n'}(F_{Z_n}(z),F_{Z_n}(z')) - C_{Y_n,Y_n'}(F_{Z}(z),F_{Z}(z')) | \\
        \label{eqthmdel5v} & ~~ + | C_{Y_n,Y_n'}(F_{Z}(z),F_{Z}(z')) - C_{Y,Y'}(F_{Z}(z),F_{Z}(z')) |, 
    \end{align}
    where the equality in \eqref{eqthmdel5s} holds true by Sklar's theorem and the invariance of copulas under increasing transformations; see \citet[Theorem 3.3]{Cai-2012}. 
    For the convergence of \eqref{eqthmdel5} to zero, we use Lipschitz continuity of copulas and \(Z_n\xrightarrow{d} Z\). For the convergence of \eqref{eqthmdel5v} to zero, we use the copula convergence in \eqref{eqthmdel3x} and \(\overline{\Ran(F_Y)} = \overline{\Ran(F_Z)}.\)
\end{proof}

The following result is a consequence of Theorem \ref{thmdel} for stochastically increasing/decreasing bivariate copulas. 
In this case, \(\partial_2\)-convergence and pointwise convergence are equivalent; see e.g. \citet[Proposition 3.6]{Siburg-2021}.
Recall that a bivariate copula \(C\) is said to be \emph{stochastically increasing/decreasing (SI/SD)} if, for all \(v\in [0,1],\) the function \(t\mapsto \partial_2 C(v,t)\) is non-increasing/non-decreasing outside a \(\lambda\)-null set. 
%for Lebesgue-almost all \(t\in (0,1).\) 
We write \((V,U)\sim C\) for a random vector having distribution function \(C.\)

\begin{corollary}[Convergence for SI random vectors]\label{corSI}~\\
    Let \((C_n)_{n\in \N}\) be a sequence of bivariate SI/SD copulas that converges pointwise to a bivariate copula \(C.\) Then, for random vectors \((V_n,U_n)\sim C_n\) and \((V,U)\sim C,\) we have \(\xi(V_n,U_n) \to \xi(V,U).\)  
\end{corollary}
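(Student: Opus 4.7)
The plan is to invoke Theorem \ref{thmdel} directly: once we establish its three hypotheses, the conclusion $\xi(V_n,U_n) \to \xi(V,U)$ follows immediately. Since all objects involved are distributed according to copulas, the marginals $F_{U_n}, F_{V_n}, F_U, F_V$ coincide with the identity on $[0,1]$; hence $F_{U_n}\circ F_{U_n}^{-1}(t) = t = F_U \circ F_U^{-1}(t)$ and likewise for $V_n, V$. This disposes of the range-convergence conditions \eqref{thmdel2} and \eqref{thmdel3} of Theorem \ref{thmdel} at no cost.

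The only substantive hypothesis to verify is $\partial_1$-convergence \eqref{thmdel1}, i.e.\ $C_n \xrightarrow{\partial_1} C$. This is the heart of the argument. Here I would invoke the equivalence, for SI/SD copulas, of pointwise convergence and $\partial_1$-convergence, as stated in \citet[Proposition 3.6]{Siburg-2021}. That equivalence rests on the SI/SD property forcing monotonicity of the partial derivative $t \mapsto \partial_1 C_n(t,u)$ in $t$ for each fixed $u$; a monotone sequence converging pointwise on a dense set converges Lebesgue-almost everywhere to the corresponding partial derivative of the limit $C$, and a standard dominated/monotone convergence argument upgrades this to $L^1$-convergence in $t$ for every fixed $u$, which is exactly $\partial_1$-convergence. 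Since the statement I need is already recorded in the literature, I would simply cite it rather than reprove it.

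With \eqref{thmdel1}, \eqref{thmdel2}, \eqref{thmdel3} of Theorem \ref{thmdel} all verified, the conclusion $\xi(V_n,U_n) \to \xi(V,U)$ follows. The main obstacle, if one were forced to avoid the external citation, would be establishing the pointwise-to-$\partial_1$ upgrade; but given the availability of \citet[Proposition 3.6]{Siburg-2021}, the proof reduces to two short sentences: trivial range convergence from uniform marginals, and a citation converting pointwise convergence of SI/SD copulas into $\partial_1$-convergence, followed by an application of Theorem \ref{thmdel}.
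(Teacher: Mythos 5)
Your proposal is correct and follows essentially the same route as the paper: trivial range convergence from uniform marginals, the cited equivalence of pointwise and $\partial_1$-convergence for SI/SD copulas (\citet[Proposition 3.6]{Siburg-2021}), and then Theorem \ref{thmdel}. The only cosmetic remark is that your parenthetical sketch of the equivalence speaks of a ``monotone sequence'' where it is really a sequence of monotone functions $t\mapsto\partial_1 C_n(t,u)$, but since you cite the result this does not affect the argument.
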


\begin{proof}
Since, for a sequence of SI/SD copulas, pointwise convergence and \(\partial_2\)-convergence are equivalent, condition \eqref{thmdel1} of Theorem \ref{thmdel} is fulfilled. Since \(V_n,U_n,V,U\) are all uniformly distributed on \((0,1),\) conditions \eqref{thmdel2} and \eqref{thmdel3} are trivially satisfied.
\end{proof}

\begin{remark}\label{remfre}
\begin{enumerate}[(a)]
\item Condition \eqref{thmdel1} of Theorem \ref{thmdel} is used for the convergence of the conditional distribution functions occurring in the integrand of \eqref{defgencopprod}, noting that, for fixed \(y\in \R,\) we have \(\partial_2^{F_X} C_{Y,X}(F_Y(y),F_X(x)) = F_{Y|X=x}(y)\) for \(P^X\)-almost all \(x\); see \citet[Theorem 2.2]{Ansari-2021}.
%ensures that, for all \(u\in [0,1],\) the conditional probabilities \(P(Y_n \leq F_{Y_n}^{-1}(u) \mid X_n = F_{X_n}^{-1}(t))\) converge to \(P(Y \leq F_Y^{-1}(u) \mid X = F_X^{-1}(t))\) for Lebesgue-almost all \(t\in [0,1].\)
Some well-known approximations of copulas satisfying the concept of \(\partial_2\)-convergence are the checkerboard, check-min or Bernstein approximation of copulas; see \citet[Example 4]{Mikusinski-2010}. In contrast, shuffles of min discussed in Example \ref{Cont.ExWC} do not converge with respect to the \(\partial_2\)-convergence.
Related concepts for convergence of conditional distributions are studied by \cite{sfx2021weak}. 
In contrast to Theorem \ref{Cont.MainThm}, the assumptions in Theorem \ref{thmdel} do not imply weak continuity of \(X_n \to X\) or \(Y_n \to Y\); see Remark \ref{RemCont} \eqref{RemCont:2}.
%condition in the conditioning variable nor weak convergence of the marginal distributions. 
    \item Condition \eqref{thmdel2} in Theorem \ref{thmdel} on range convergence of \(F_{X_n}\) is used for the pointwise convergence of the copulas \(C_{Y_n,Y_n'}\). 
    Note that the copula \(C_{Y_n,Y_n'}\) in \eqref{defgencopprod} is given through a generalization of the \(\conv\)-product in \eqref{defcopprod} allowing for mixing random variables that may also have a discontinuous distribution function \(F_{X_n}.\) 
    %For the pointwise convergence of the generalized \(\conv\)-product, we use condition \eqref{thmdel2} in Theorem \ref{thmdel} on range convergence of \(F_{X_n}.\)
    The standard Markov product of copulas in \eqref{defcopprod} is only applicable in the case where \(F_{X_n}\) and \(F_X\) are continuous; see also Example \ref{exrangconvX}.
    %\item \label{remfre2} Measures of sensitivity ... 
    \item Conditions \eqref{thmdel2} and \eqref{thmdel3} of Theorem \ref{thmdel} are satisfied, in particular, if the random vectors \((Y_n,X_n)\) and \((Y,X)\) are from the same Fr{\'e}chet class, i.e., \(F_{X_n} = F_X\) and \(F_{Y_n} = F_Y\) for all \(n\in \N.\) This is the case in the setting of Corollary \ref{corSI}. 
    In the proof of Theorem \ref{thmdel}, we use that the Markov products of the transformed vectors in \eqref{eqthmdel1} converge weakly, whereas, in general, \((Y_n,Y_n') \not\xrightarrow[]{d}(Y,Y')\).
    %\item The copula product in \eqref{defgencopprod} is a generalization of the \(\conv\)-product in \eqref{defcopprod} to mixing random variables that may also have a discontinuous distribution function. In the proof of Theorem \ref{thmdel}, we use condition \eqref{thmdel2} on range convergence of \(F_{X_n}\) to \(F_X\) to obtain convergence of the copulas in \eqref{eqthmdel3}. 
    %\item For a bivariate random vector \((U,V)\) with distribution function \(C,\) the SI/SD property of \(C\) is equivalent to the conditional distribution \(V|U=u\) being increasing in \(u\) with respect to the stochastic order, see \citet[Theorem 5.2.10]{Nelsen-2006}.
    \item The SI/SD assumptions in Corollary \ref{corSI} are positive/negative dependence concepts satisfied by various copulas families. For an overview of many well-known bivariate copula families, that are SI/SD and pointwise continuous in the underlying copula parameter; see \cite{Ansari-Rockel-2023}.
\end{enumerate}
    
\end{remark}

In the following example, we discuss the relevance of condition \eqref{thmdel2} in Theorem \ref{thmdel}.

\begin{example}[Range convergence of \(F_{X_n}\)]\label{exrangconvX}~\\
    Let \((Y_n,X_n)_{n\in \N}\) be a sequence of random variables defined by \((Y_n,X_n) = (U,q_n(U)),\) where \(U\) is uniform on \((0,1)\) and where \(q_n\) denotes the quantile function of the zero mean normal distribution with variance \(1/n.\) 
    Then, we have that \((Y_n,X_n)\xrightarrow{~d~} (U,0) =: (Y,X).\)
    The uniquely determined copula of \((Y_n,X_n)\) is given by the comonotonicity copula, i.e., \(C_{Y_n,X_n}(u,v) = M(u,v) := \min\{u,v\}\) for all \(u,v\in [0,1].\) Since \(X\) follows a Dirac-distribution, \(M\) is also a copula of \((Y,X),\) so that condition \eqref{thmdel1} in Theorem \ref{thmdel} is fulfilled. Further, condition \eqref{thmdel3} is also trivially satisfied.
    However, condition \eqref{thmdel2} is not satisfied since \((F_{X_n}\circ F_{X_n}^{-1})(t) = t \ne 1 = (F_X \circ F_X^{-1})(t)\) for all \(t\in (0,1).\) 
    The following facts show that condition \eqref{thmdel2} in Theorem \ref{thmdel} on range convergence of \(F_{X_n}\) cannot be omitted:
    \begin{enumerate}[(a)]
        \item Since \(Y_n = F_{X_n}(X_n)\) for all \(n,\) we have that \(\xi(Y_n,X_n) = 1\ne 0 =\xi(Y,X)\) for all \(n\) using for the last equality that \(X\) follows the Dirac distribution in \(0.\) Hence, \(\xi(Y_n,X_n)\) does not converge to \(\xi(Y,X).\)
        \item Since \(Y_n\) perfectly depends on \(X_n\) for all \(n,\) we have that \(Y_n = Y_n' = U\) almost surely for all \(n.\)  It follows that \(C_{Y_n,Y_n'} = M\) for all \(n\) noting that the copula of \((Y_n,Y_n')\) is uniquely determined.
        However, for the Markov product of \((Y,X),\) we obtain that \(Y\) and \(Y'\) are independent because \(X\) follows a Dirac distribution. Note that, in this case, conditional independence of \(Y\) and \(Y'\) given \(\XX\) is equivalent to independence of \(Y\) and \(Y'.\)
        The uniquely determined copula of \((Y,Y')\) is the independence copula \(\Pi(u,v) := uv.\) Consequently, \(C_{Y_n,Y_n'} = M\) does not converge to \(C_{Y,Y'} = \Pi.\) 
    \end{enumerate}
\end{example}

\section{Continuity of dependence measures related to \(\xi\)}\label{secreldepm}

In this section, we show that our continuity results apply to various dependence measures recently studied in the literature.
In Section \ref{secmultT}, we consider an extension of Chatterjee's rank correlation to a vector of multi-output variables which we focus on in Section \ref{Sec.Cont.PM} in more detail. In Section \ref{secmeaexpl}, we establish continuity of a measure of explainablity.
Section \ref{secreldepme} then covers an overview of several dependence measures which apply to our continuity results in Section \ref{sec2}.

\subsection{An extension of \(\xi\) to multivariate output variables}\label{secmultT}

An extension of Chatterjee's rank correlation $\xi$ to a multi-response vector $\YY = (Y_1, \dots, Y_q)$, $q \in \N$, has been recently proposed by \cite{ansari2023MFOCI}. The extension is defined by 
\begin{align} \label{defmdm}
  T (\YY,\XX)
  & := 1 - \frac{q - \sum_{i=1}^{q} \xi(Y_i, (\XX,Y_{i-1},\dots,Y_{1}))}{q - \sum_{i=1}^{q} \xi(Y_i, (Y_{i-1},\dots,Y_{1}))}, 
   \qquad \text{with}~~~ \xi(Y_1,\emptyset):=0,
\end{align}
noting that, for \(q=1,\) \(T\) reduces to \(\xi.\)
%\(T\) is a true generalization of $\xi$ in that $T(Y,\XX) = \xi(Y,\XX)$ for $Y$ being univariate.
The measure \(T\) inherits the basic properties of \(\xi,\) i.e., $0 \leq T(\YY,\XX) \leq 1$, where $T(\YY,\XX) = 0$ if and only if $\YY$ and $\XX$ are independent, and $T(\YY,\XX) = 1$ if and only if $\YY$ perfectly depends on $\XX$.
The following continuity result for the measure $T$ defined by \eqref{defmdm} is an immediate consequence of Theorem \ref{ThmCont} and Theorem \ref{Cont.MainThm}.

\begin{corollary}[Continuity of \(T\)]~~\label{Cont.MainThm.Multi}
Let \((\YY,\XX)\) be a $(q+p)$-dimensional random vector and let 
$(\YY_n,\XX_n)_{n\in\N}$ be a sequence 
%\linebreak $=(X_{1,n},\ldots,X_{p,n},Y_{1,n},\ldots,Y_{q,n})_{n\in\N}$
of \((q+p)\)-dimensional random vectors. 
Let \(V_1 \subset \R^{p}\) be open such that \(P(\XX\in V_1)=1\) and,
for all \(i\in \{2,\ldots,q\}\), 
let \(U_i\subset \R^{i-1}\) and \(V_i\subset \R^{p+i-1}\) be open such that \(P((Y_{1},\ldots,Y_{i-1})\in U_i)=1\) and \(P((Y_{1},\ldots,Y_{i-1},\XX)\in V_i)=1\).
Consider the following conditions:
\begin{enumerate}[(i)]
\item\label{Cont.MainThm.Multi.1} 
\((\YY_n,\XX_n)\xrightarrow{ d } (\YY,\XX),\)
\item\label{Cont.MainThm.Multi.2} 
\((\E[u(Y_{1,n})|\XX_n=\,\cdot\,])_{n\in \N}\) is asymptotically equicontinuous on \(V_1,\) \\
\(\left(\E[u(Y_{i,n})|(Y_{1,n},\ldots,Y_{i-1,n},\XX_n)=\,\cdot\,]\right)_{n\in \N}\) is asymptotically equicontinuous on \(V_i,\) and \\ 
\(\left(\E[u(Y_{i,n})|(Y_{1,n},\ldots,Y_{i-1,n})=\,\cdot\,]\right)_{n\in \N}\) is asymptotically equicontinuous on \(U_i\) f.a. \(i\in \{2,\ldots,q\}\) and \(u\in \cU(\R),\)
\item\label{Cont.MainThm.Multi.12}
The Markov product of \((Y_{1,n},\XX_{n})\) converges weakly to the Markov product of \((Y_{1},\XX)\), \\
the Markov product of \((Y_{i,n},(Y_{1,n},\dots,Y_{i-1,n},\XX_n))\) converges weakly to the Markov product of \((Y_{i},(Y_{1},\dots,Y_{i-1},\XX))\), and 
the Markov product of \((Y_{i,n},(Y_{1,n},\dots,Y_{i-1,n}))\) converges weakly to the Markov product of \((Y_{i},(Y_{1},\dots,Y_{i-1}))\) for all \(i \in \{2,\dots,q\}\),
\item\label{Cont.MainThm.Multi.3} \((F_{Y_{i,n}}\circ F_{Y_{i,n}}^{-1})(t) \xrightarrow[n\to \infty]{} (F_{Y_i}\circ F_{Y_i}^{-1})(t)\) for \(\lambda\)-almost all \(t\in (0,1)\) and for all \(i\in \{1,\ldots,q\}\).
\end{enumerate}
Then the following assertions hold true.
\begin{enumerate}[(a)]
    \item \eqref{Cont.MainThm.Multi.1} + \eqref{Cont.MainThm.Multi.2} \(\Longrightarrow\) \eqref{Cont.MainThm.Multi.12}
    \item \eqref{Cont.MainThm.Multi.12} + \eqref{Cont.MainThm.Multi.3} \(\Longrightarrow\) \(\lim_{n \to \infty} T(\YY_n,\XX_n) = T(\YY,\XX).\)
\end{enumerate}
\end{corollary}

In Section \ref{Sec.Cont.PM}, we discuss the measure \(T\) in more detail and verify continuity of \(T\) for the class of elliptical and \(\ell_1\)-norm symmetric distributions.

\subsection{A measure of explainability}\label{secmeaexpl}

For a square integrable random variable \(Y\), consider the functional \(\Lambda(Y,\XX) := \mathbb{V}(\mathbb{E}(Y \vert \XX))/\mathbb{V}(Y).\)
Then \(\Lambda(Y,\XX)\) coincides with the fraction of explained variance of $Y$ given $\XX$, also known as the Sobol' index 
%or Cram{\'e}r-von-Mises index 
(see \cite{Sobol-1993}). It can be represented as
\begin{eqnarray}\label{Explain.Rep.}
  \Lambda(Y,\XX) 
  &  = & \varrho(Y,Y')
     = \frac{\mathbb{E} (Y\,Y') - \mathbb{E}(Y)\,\mathbb{E}(Y')}{\sqrt{\mathbb{V}(Y)}\sqrt{\mathbb{V}(Y')}},
\end{eqnarray}
where $\varrho$ denotes Pearson's correlation coefficient.
The representation of \(\Lambda(Y,\XX)\) as the Pearson's correlation of the Markov product \((Y,Y')\) can be found in \cite{janon2014,sfx2022phi}. According to \cite{Ansari-LFT-2023}, \(\Lambda(Y,\XX)\) measures the sensitivity of the conditional expectations \(\mathbb{E}(Y \vert \XX)\).
Recall that \(\Lambda(Y,\XX) \in [0,1]\), and we have  
\(\Lambda(Y,\XX) = 0\) if and only if \(\mathbb{V}(\mathbb{E}(Y \vert \XX)) = 0\) (which is the case if \(Y\) and \(\XX\) are independent, but not vice versa), and 
\(\mathbb{E}(Y \vert \XX) = 1\) if and only if \(Y\) perfectly depends on \(\XX\).

The following result shows that \(\Lambda\) is continuous with respect to weak convergence of the Markov product. Hence, our continuity results for Markov products from Section \ref{sec2} yield sufficient conditions for continuity of \(\Lambda.\)

\begin{corollary}[Continuity of $\Lambda$]~~\label{CorContLambda}\\
For random vectors \((Y_n,\XX_n)\) and \((Y,\XX)\), let \(Y'\) and, similarly, \(Y_n'\) be defined as in \eqref{Assumption.DimR}. 
If 
\begin{enumerate}[(i)]
\item \label{CorContLambda1} \((Y_n,Y_n') \xrightarrow{~d~} (Y,Y')\), and 
\item \label{CorContLambda2} \(\sup_{n \in \mathbb{N}} \mathbb{E}(|Y_n|^{2+\varepsilon}) < \infty\) for some $\varepsilon > 0$,
\end{enumerate}
then \(\lim_{n \to \infty} \Lambda(Y_n,\XX_n) = \Lambda(Y,\XX).\)    
\end{corollary}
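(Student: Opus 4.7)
The plan is to exploit the identity $\Lambda(Y,\XX)=\rho(Y,Y')$ recorded in \eqref{Explain.Rep.} and reduce the problem to showing convergence of the first two moments and of the cross-moment of the Markov products. Concretely, it suffices to prove that $\mathbb{E}(Y_n)\to \mathbb{E}(Y)$, $\mathbb{E}(Y_n^2)\to \mathbb{E}(Y^2)$, and $\mathbb{E}(Y_nY_n')\to \mathbb{E}(YY')$; since $Y_n\eqd Y_n'$ and $Y\eqd Y'$, the first two convergences automatically handle the marginal means and variances of both coordinates of the Markov product, and then $\rho(Y_n,Y_n')\to \rho(Y,Y')$ by continuity of the arithmetic operations (assuming $\mathbb{V}(Y)>0$, which is implicit in the definition of $\Lambda$).

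The standard tool is Vitali's convergence theorem: weak convergence plus uniform integrability gives convergence of expectations. Since $(Y_n,Y_n')\xrightarrow{d}(Y,Y')$, the continuous mapping theorem yields $Y_n\xrightarrow{d}Y$, $Y_n^2\xrightarrow{d}Y^2$, and $Y_nY_n'\xrightarrow{d}YY'$. So the task reduces to verifying that the families $(Y_n)_n$, $(Y_n^2)_n$ and $(Y_nY_n')_n$ are uniformly integrable. The first two are immediate from assumption \eqref{CorContLambda2}: $\sup_n \mathbb{E}(|Y_n|^{2+\varepsilon})<\infty$ implies $\sup_n \mathbb{E}(|Y_n|^{1+\varepsilon'})<\infty$ for some $\varepsilon'>0$ (by Jensen) as well as $\sup_n \mathbb{E}(|Y_n^2|^{1+\varepsilon/2})<\infty$, and an $L^{1+\delta}$-bound is a classical sufficient condition for uniform integrability.

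The only slightly delicate step is uniform integrability of the cross-term $(Y_nY_n')_n$. Here I would apply the Cauchy--Schwarz inequality in the form
\begin{align*}
\mathbb{E}\bigl(|Y_nY_n'|^{1+\varepsilon/2}\bigr)
\leq \bigl(\mathbb{E}(|Y_n|^{2+\varepsilon})\bigr)^{1/2}\bigl(\mathbb{E}(|Y_n'|^{2+\varepsilon})\bigr)^{1/2},
\end{align*}
and use the fact that $Y_n'\eqd Y_n$ (by definition of the Markov product) to conclude $\sup_n \mathbb{E}(|Y_nY_n'|^{1+\varepsilon/2})\le \sup_n \mathbb{E}(|Y_n|^{2+\varepsilon})<\infty$. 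This yields uniform integrability of $(Y_nY_n')_n$, and Vitali then gives $\mathbb{E}(Y_nY_n')\to\mathbb{E}(YY')$.

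Putting the three convergences together with the algebraic identity
\begin{align*}
\rho(Y_n,Y_n')=\frac{\mathbb{E}(Y_nY_n')-\mathbb{E}(Y_n)\mathbb{E}(Y_n')}{\sqrt{\mathbb{V}(Y_n)}\sqrt{\mathbb{V}(Y_n')}}
\end{align*}
and the representation \eqref{Explain.Rep.} yields $\Lambda(Y_n,\XX_n)\to \Lambda(Y,\XX)$. The main obstacle is the Cauchy--Schwarz/uniform integrability argument for the cross term; everything else is routine once the representation $\Lambda=\rho(Y,Y')$ is invoked.
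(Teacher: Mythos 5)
Your proposal is correct and follows the same overall route as the paper: invoke the representation \(\Lambda(Y,\XX)=\rho(Y,Y')\) from \eqref{Explain.Rep.}, get weak convergence of \(Y_n\), \(Y_n^2\) and \(Y_nY_n'\) from assumption \eqref{CorContLambda1} via the continuous mapping theorem, and upgrade to moment convergence through uniform integrability (the paper cites \citet[Theorem 3.5]{Billingsley-1999}, which is exactly the ``weak convergence plus uniform integrability'' statement you call Vitali). The single point where you diverge is the delicate step you correctly isolated, uniform integrability of the cross term \((Y_nY_n')_n\): the paper bounds \(\mathbb{E}(|Y_nY_n'|^{1+\varepsilon})\) by the corresponding expectation under the comonotonic coupling \((F_{Y_n}^{-1}(U),F_{Y_n'}^{-1}(U))\), using that the comonotonic vector is maximal in the supermodular order and that \((x,y)\mapsto x^{1+\varepsilon}y^{1+\varepsilon}\) is supermodular on \([0,\infty)^2\), whereas you use Cauchy--Schwarz together with \(Y_n'\eqd Y_n\). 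Your route is more elementary and dovetails exactly with the assumed \((2+\varepsilon)\)-moment bound (the exponent \(1+\varepsilon/2\) of the product is matched precisely by \(\mathbb{E}(|Y_n|^{2+\varepsilon})\)), while the paper's supermodular argument as written lands on \(\mathbb{E}(|Y_n|^{2+2\varepsilon})\) and implicitly requires rescaling the exponent; both arguments are sound, but nothing is lost by your choice, and the remark that \(\mathbb{V}(Y)>0\) is needed (and inherited by \(\mathbb{V}(Y_n)\) for large \(n\)) is a sensible addition.
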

\begin{proof}
Convergence of \(\mathbb{E}(Y_n)\), \(\mathbb{E}(Y_n')\),\(\mathbb{V}(Y_n)\), and \(\mathbb{V}(Y'_n)\) in \eqref{Explain.Rep.} follows by uniform integrability in \eqref{CorContLambda2} and \citet[Theorem 3.5]{Billingsley-1999}, and weak convergence of the product \(Y_n\,Y_n'\) to \(Y\,Y'\) is due to the continuous mapping theorem. It remains to prove uniform integrability of \(Y_n\,Y_n'\) which then implies the convergence of \(\mathbb{E} (Y_n\,Y'_n)\) to \(\mathbb{E} (Y\,Y')\). But this follows from \(\mathbb{E}(|Y_n\,Y_n'|^{1+\varepsilon/2}) \leq  \mathbb{E}(|Y_n|^{2+\varepsilon})\) using Cauchy-Schwarz inequality and \(Y_n \eqd Y_n'.\) 
\begin{comment}
\begin{align}
    \mathbb{E}(|Y_n\,Y_n'|^{1+\varepsilon}) 
      =  \mathbb{E}(|Y_n|^{1+\varepsilon} \,|Y_n'|^{1+\varepsilon})
    \leq \mathbb{E}(|F_{Y_n}^{-1}(U)|^{1+\varepsilon} \, |F_{Y_n'}^{-1}(U)|^{1+\varepsilon}) %= \mathbb{E}(|F_{Y_n}^{-1}(U)|^{2+2\varepsilon}),
    = \mathbb{E}(|Y_n|^{2+2\varepsilon}),
\end{align}
where \(U\) is a random variable that is uniformly distributed on \((0,1).\)
For the inequality, we use on the one hand that the comonotonic random vector is a maximal element with respect to the supermodular order, see \citet[Theorem 9.A.21]{Shaked-2007}. On the other hand, we use that the function \(f(y,x) = y^{1+\varepsilon} \, x^{1+\varepsilon}\) is supermodular on $[0,\infty)^2$. The equality follows with \(Y_n \eqd Y_n'.\)
%\begin{align*}
%  \mathbb{E}(|Y_n\,Y_n'|^{1+\varepsilon})
%  &   =  \int_{\mathbb{R}^p} \int_{\mathbb{R}^2} |yy'|^{1+\varepsilon} \de P^{(Y_n,Y_n')\mid\XX=\xx} (y,y') \de P^\XX(\xx)
%  \\
%  &   =  \int_{\mathbb{R}^p} \int_{\mathbb{R}} \int_{\mathbb{R}} |yy'|^{1+\varepsilon} \de P^{Y_n\mid\XX=\xx}(y) \de P^{Y_n\mid\XX=\xx}(y') \de P^\XX(\xx)
%  \\
%  &   =  \int_{\mathbb{R}^p} \int_{(0,1)} \int_{(0,1)} |F_{Y_n\mid\XX=\xx}^{-1}(u) \,F_{Y_n\mid\XX=\xx}^{-1}(u')|^{1+\varepsilon} \de \lambda(u) \de \lambda(u') \de P^\XX(\xx)
%  \\
%  & \leq 2 \, \int_{\mathbb{R}^p} \int_{(0,1)} \int_{(0,u')} |F_{Y_n\mid\XX=\xx}^{-1}(u')^2|^{1+\varepsilon} \de \lambda(u) \de \lambda(u') \de P^\XX(\xx)
%  \\
%  & \leq 2 \, \int_{\mathbb{R}^p} \int_{(0,1)} |F_{Y_n\mid\XX=\xx}(u')|^{2+2\varepsilon} \de  \lambda(u') \de P^\XX(\xx)
%  \\
%  & = \dots = 2 \, \mathbb{E}(|Y_n|^{2+2\varepsilon}) < \infty
%\end{align*}
%\JA{Habe Verteilungsfunktionen zu verallgemeinerte Inversen abgeändert. Das Argument %geht dann auch einfacher.}
%by assumption \eqref{CorContLambda2}.
\end{comment}
Thus, \(\lim_{n \to \infty} \Lambda(Y_n,\XX_n) = \Lambda(Y,\XX).\)
\end{proof}

\subsection{Related dependence measures}\label{secreldepme}

In the following, we briefly discuss continuity of related dependence measures studied in the literature.

As already mentioned, the kernel partial correlation coefficient in \citet[Proposition 8]{deb2020b} is based on similar continuity conditions as the measure of explainability in Corollary \ref{CorContLambda}.
Hence, our results on weak continuity of Markov products in Section \ref{sec2}, particularly Theorem \ref{Cont.MainThm}, also apply to the kernel partial correlation.

The optimal transport-based Wasserstein correlation coefficient studied by \cite{wiesel2022} underlies similar modes of convergence. 
Due to \citet[Theorem 4.1]{wiesel2022}, it is continuous with respect to the adapted Wasserstein distance, which also accounts for the distance between conditional distributions. 
Since optimal transport plans are stable under weak convergence \citet[Theorem 5.20]{Villani-2009}, a version of Theorem \ref{Cont.MainThm} on conditional weak convergence also applies to the Wasserstein correlation coefficient.

The measures of sensitivity in \cite{Ansari-LFT-2023} are defined by comparing conditional distribution functions. The continuity result in \citet[Proposition 8]{Ansari-LFT-2023} makes use of the copula-based concept of weak conditional convergence due to \cite{sfx2021weak}. Using the continuity results for generalized copula products in \citet[Theorem 2.23]{Ansari-2021}, it can be shown that these measures are also continuous under the assumptions of Theorem \ref{thmdel}. This also holds true for the copula-based dependence measures in \cite{fgwt2020} which compare the distance of conditional distribution functions with respect to independence.

%\clearpage
%%%%%%%%%%%%%%%%%%%%%%%%%%%%%%%%%%%%%%%%%%%%%%%%%%%%%%%%%%%%%%%%%%%%%%%%%%%%%%%%%%%%%%%%
\section{Continuity results for families of distributions}
\label{Sec.Cont.PM}

In this section, we show for well-known families of multivariate distributions weak continuity of their Markov products. 
For \(\ell_1\)-norm symmetric distributions, we verify the conditions in Theorem \ref{ThmCont} directly. For the class of elliptical distributions, we establish sufficient conditions for conditional weak convergence due to Theorem \ref{Cont.MainThm}.
The distributions we focus on all exhibit continuous marginal distribution functions so that range continuity is trivially satisfied.
As a consequence, we obtain continuity of \(\xi\) and of related dependence measures as discussed in the previous section. 
Our focus in this section is on the multi-output extension \(T,\) for which we discuss its continuity properties in more detail.
%\JA{For better readability, the proofs of all the results in this section are deferred to the appendix.}

%%%%%%%%%%%%%%%%%%%%%%%%%%%%%%%%%%%%%%%%%%%%%%%%%%%%%%%%%%%%%%%%%%%%%%%%%%%%%%%%%%%%%%%%
\subsection{Continuity within the class of elliptical distributions} \label{Sec.Cont.PM.Ellipt.}

A \((q+p)\)-dimensional random vector \((\YY,\XX)\) is said to be \emph{elliptically distributed}, written \((\YY,\XX)\sim \cE({\boldsymbol \mu},\Sigma,\phi),\) for some vector \({\boldsymbol \mu}\in \R^{q+p},\) 
for some positive semi-definite matrix \(\Sigma=(\sigma_{ij})_{1\leq i,j\leq q+p},\) and 
some generator \(\phi\colon \R_+ \to \R,\) if the characteristic function of \((\YY,\XX)-{\boldsymbol \mu}\) is the function \(\phi\) applied to the quadratic form \({\bf t}^T\Sigma {\bf t},\) i.e., \(\varphi_{(\YY,\XX)-{\boldsymbol \mu}}({\bf t})=\phi({\bf t}^T\Sigma {\bf t})\) for all \({\bf t}\in \R^{q+p}.\) 
%W.l.o.g., we assume that \(\XX\) and \(\YY\) have nondegenerate components, i.e., \(\sigma_{ii}>0\) for all \(i\in \{1,\ldots,p+q\}.\)
For example, if \(\phi(u)=\exp(-u/2),\) then \((\YY,\XX)\) is multivariate normal with mean vector \({\boldsymbol \mu}\) and covariance matrix \(\Sigma.\) 
Elliptical distributions have a stochastic representation 
\begin{align}\label{radialpart}
(\YY,\XX)\eqd \boldsymbol{\mu} + R A \UU^{(k)},
\end{align}
where \(R\) is a non-negative random variable, \(A^TA=\Sigma\) is a full rank factorization of \(\Sigma,\) and where \(\UU^{(k)}\) is uniform on the unit sphere in \(\R^k\) with \(k=\rank(\Sigma);\) 
see \cite{Cambanis-1981} and \cite{FangKotz-1990} for several properties of elliptical distributions. 
%For \(i\ne j,\) denote by \(\varrho_{ij}\) the correlation between the \(i\)-th and \(j\)-the component of the vector \((\XX,\YY),\) i.e., \(\varrho_{ij}=\frac{\sigma_{ij}}{\sqrt{\sigma_{ii}}\sqrt{\sigma_{jj}}}\) whenever 
We will need the decomposition 
\begin{align}\label{eqdecsig}
\Sigma = \begin{pmatrix}
\Sigma_{11} & \Sigma_{12} \\
\Sigma_{21} & \Sigma_{22}
\end{pmatrix}
\end{align}
and use that elliptical distributions are closed under marginalization (i.e., also their marginal distributions are elliptical). More precisely, \(\Sigma_{11}\) is of dimension \(q\times q\), and we have that \(\YY\sim \cE({\boldsymbol \mu}_1,\Sigma_{11},\phi)\) and \(\XX\sim \cE({\boldsymbol \mu}_2,\Sigma_{22},\phi)\), where \({\boldsymbol \mu}=({\boldsymbol \mu}_1,{\boldsymbol \mu}_2)\) with \({\boldsymbol \mu}_1\in \R^q\) and \({\boldsymbol \mu}_2\in \R^p\); see e.g. \citet[Corollary 1 of Theorem 2.6.3]{Fang-1990}.

The following result verifies the continuity conditions in Corollary \ref{Cont.MainThm.Multi} for elliptical distributions. 
In particular, it provides sufficient conditions for continuity of $\xi$ and \(T\) in the scale matrix \(\Sigma\) and the radial part \(R.\) Note that $\xi$ and \(T\) are location-scale invariant, and thus, they neither depend on the centrality parameter \(\boldsymbol{\mu}\) nor on componentwise scaling factors \citep[Theorem A.2]{ansari2023MFOCI}.

\begin{theorem}[Continuity for elliptical distributions]\label{theconell}~\\
Let \((\YY_n,\XX_n)\sim \cE({\boldsymbol{\mu}_n},\Sigma_n,\phi_n),\) \(n\in \N,\) and \((\YY,\XX)\sim \cE(\boldsymbol{\mu},\Sigma,\phi)\) be \((q+p)\)-dimensional elliptically distributed random vectors.
Assume that \(\Sigma_{n}\), \(n\in \N\), and \(\Sigma\) are positive definite. If \(\Sigma_n\to \Sigma\) componentwise and if either
\begin{enumerate}[(i)]
    \item \label{theconell1} \(\phi_n =\phi\)  for all \(n,\) and the radial part \(R\) in \eqref{radialpart} associated with \(\phi\) has a continuous distribution function, or
    \item \label{theconell2} \(\phi_n(u) \to \phi(u)\) for all \(u\geq 0,\) and the radial variable \(R_n\) associated with \(\phi_n\) has a density \(f_n\) such that
    \begin{enumerate}
        \item \((f_n)_{n\in \N}\) is asymptotically uniformly equicontinuous on \((0,\infty),\) and
        \item \((f_n)_{n\in \N}\) is pointwise bounded, i.e., \(M(x):=\sup_{n\in \N} f_n(x)<\infty\) for all \(x\in (0,\infty),\)
        %\item \(M(x)\to 0\) as \(x\to \infty,\)
    \end{enumerate}
\end{enumerate}
then conditions \eqref{Cont.MainThm.Multi.1} -- \eqref{Cont.MainThm.Multi.3} in Corollary \ref{Cont.MainThm.Multi} are satisfied and thus \(T(\YY_n,\XX_n) \to T(\YY,\XX).\)
\end{theorem}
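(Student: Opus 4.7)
The strategy is to verify conditions (i), (ii), and (iv) of Corollary \ref{Cont.MainThm.Multi}: assertion (a) of that corollary then delivers (iii), and assertion (b) gives the desired convergence of $T$. Since $T$ is location--scale invariant, I may assume throughout that $\boldsymbol{\mu}_n = \boldsymbol{\mu} = 0$, which removes the centrality parameter from all subsequent arguments. Range convergence (iv) is immediate in both regimes: in case \eqref{theconell1} the fixed radial variable $R$ has a continuous distribution, and in case \eqref{theconell2} each $R_n$ has a density, so in either case every one-dimensional marginal of $\cE(0,\Sigma_n,\phi_n)$ is continuously distributed and $F_{Y_{i,n}} \circ F_{Y_{i,n}}^{-1}(t) = t$. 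Condition (i) reduces to pointwise convergence of the characteristic functions $\mathbf{t} \mapsto \phi_n(\mathbf{t}^T \Sigma_n \mathbf{t})$: this is clear in both cases from $\Sigma_n \to \Sigma$ combined with either continuity of the fixed $\phi$ or the pointwise convergence $\phi_n \to \phi$; Lévy's continuity theorem then yields $(\XX_n,\YY_n) \xrightarrow{~d~} (\XX,\YY)$.

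The real work lies in condition (ii), the asymptotic equicontinuity of
\[
  \zz \mapsto \E[u(Y_{i,n}) \mid (\XX_n,Y_{1,n},\dots,Y_{i-1,n}) = \zz]
\]
(and the analogous quantities with $\XX_n$ dropped) for every $u \in \cU(\R)$. The structural ingredient is the well-known closure of elliptical families under conditioning: with a block decomposition as in \eqref{eqdecsig}, the conditional law of the lower block given that the upper block takes the value $\mathbf{w}$ is again elliptical, with location $\Sigma_{21}\Sigma_{11}^{-1}\mathbf{w}$, scale $\Sigma_{22}-\Sigma_{21}\Sigma_{11}^{-1}\Sigma_{12}$, and a modified radial law depending on $\mathbf{w}$ only through the scalar $q(\mathbf{w}) = \mathbf{w}^T \Sigma_{11}^{-1} \mathbf{w}$ (see, e.g., \citet[Corollary 5]{Cambanis-1981}). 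Thus every conditional expectation above factors through the continuous maps $\mathbf{w} \mapsto \Sigma_{n,21}\Sigma_{n,11}^{-1}\mathbf{w}$ and $\mathbf{w} \mapsto q_n(\mathbf{w})$, and the task reduces to a uniform-in-$n$ continuity estimate for the dependence on $q_n(\mathbf{w})$.

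The hard part is exactly this uniform-in-$n$ control. In case \eqref{theconell1}, with $\phi_n=\phi$ fixed, $q_n \to q$ locally uniformly because $\Sigma_{n,11}^{-1} \to \Sigma_{11}^{-1}$, and continuity of the distribution of $R$ makes the map from $q$ to the conditional expectation of $u$ continuous. A three-$\varepsilon$ argument---splitting the variation between $\zz$ and $\zz'$ into continuity of the limiting conditional expectation at $\zz$, closeness at $\zz$ for finite $n$ to the limit, and closeness at $\zz'$---then supplies the required $\delta(\zz,\varepsilon)$ and $n(\zz,\varepsilon)$. In case \eqref{theconell2}, I would express the conditional density of $Y_{i,n}$ given $\zz$ explicitly through $f_n$ composed with a function of $\zz$; asymptotic uniform equicontinuity of $f_n$ together with the pointwise envelope $M$ justifies a dominated-convergence passage that yields equicontinuity at $\zz$ uniformly for large $n$. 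Once conditions (i), (ii) and (iv) of Corollary \ref{Cont.MainThm.Multi} are in hand, the two implications of that corollary conclude $T(\YY_n,\XX_n) \to T(\YY,\XX)$.
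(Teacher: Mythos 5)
Your skeleton is the same as the paper's: verify conditions \eqref{Cont.MainThm.Multi.1}, \eqref{Cont.MainThm.Multi.2} and \eqref{Cont.MainThm.Multi.3} of Corollary \ref{Cont.MainThm.Multi}, get \eqref{Cont.MainThm.Multi.1} from L\'{e}vy's continuity theorem, get \eqref{Cont.MainThm.Multi.3} from continuity of the marginals, and attack \eqref{Cont.MainThm.Multi.2} through the closure of elliptical laws under conditioning, with the conditional law depending on the conditioning value only through the location \(\Sigma_{n,21}\Sigma_{n,11}^{-1}\xx\), the scale \(\sigma_n^*\) and the scalar \(q_n(\xx)=\xx^T\Sigma_{n,11}^{-1}\xx\) (\citet[Corollary 5]{Cambanis-1981}). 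In case \eqref{theconell1} your three-\(\varepsilon\) plan is workable, but note that it silently requires locally uniform (equivalently, continuous) convergence of \(\xx\mapsto \E[u(Y_n)\mid\XX_n=\xx]\), which in turn rests on continuity of the conditional radial distribution in its parameter; you assert this ("continuity of the distribution of \(R\) makes the map from \(q\) to the conditional expectation continuous") but do not derive it. The paper obtains it from the explicit ratio representation of \(F_{R_{q(\xx)}}\) in \citet[Eq.~(15)]{Cambanis-1981} together with dominated convergence and continuity of \(F_R\), and then passes from continuous convergence of the conditional characteristic functions to condition \eqref{Cont.MainThm.Multi.2} via \citet[Lemmas 3 and 4]{Sethuraman-1961} and \citet[Theorem 4]{Sweeting_1989} rather than a hand-made equicontinuity estimate; these are fillable details in your case \eqref{theconell1}.

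The genuine gap is in case \eqref{theconell2}. The mechanism you propose---"asymptotic uniform equicontinuity of \(f_n\) together with the pointwise envelope \(M\) justifies a dominated-convergence passage"---does not work as stated: \(M(x)=\sup_n f_n(x)\) is only finite pointwise and is not an integrable dominant for the integrals that define the conditional radial distribution (ratios of terms of the form \(\int_{(0,\infty)}(z^2+a^2)^{-p/2}f_n(\sqrt{z^2+a^2})\,\mathrm{d}z\)), so dominated convergence cannot be invoked; moreover, you never identify a limiting density---nothing in the hypotheses says a priori that \(R\) is absolutely continuous, so there is no limit object for your "explicit expression through \(f_n\)" to converge to. The missing idea is the converse-to-Scheff\'{e} step (the paper's Lemma \ref{lemmswee}, after \citet{Sweeting-1986}): hypotheses (a) and (b), combined with \(R_n\xrightarrow{~d~}R\) (which itself must be extracted from \(\phi_n\to\phi\) and \(\Sigma_n\to\Sigma\)), yield that \(R\) has a uniformly continuous density \(f\) and that \(f_n\to f\) uniformly. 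With this uniform convergence in hand one proves uniform convergence on compact intervals of the denominators \(G_n\) and numerators \(H_n\) of the conditional radial distribution functions, hence \(F_{R_{n,q_n(\xx_n)}}(r)\to F_{R_{q(\xx)}}(r)\) along \(\xx_n\to\xx\), which is exactly what feeds the Sethuraman/Sweeting argument (or your three-\(\varepsilon\) argument). Without this step your treatment of case \eqref{theconell2} does not go through.
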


The proof of Theorem \ref{theconell} is deferred to the appendix. 
As an application of Theorem \ref{theconell}\eqref{theconell1}, we immediately obtain the following continuity result for the multivariate normal distribution.

\begin{corollary}[Continuity for normal distributions]~\\
Let \((\YY_n,\XX_n)\sim N(\boldsymbol{\mu}_n,\Sigma_n),\) \(n\in \N,\) and \((\YY,\XX)\sim N(\boldsymbol{\mu},\Sigma)\) be \((q+p)\)-dimensional normally distributed random vectors with \(\Sigma_n,\) \(n\in \N,\) and \(\Sigma\) positive definite. 
Assume that \(\Sigma_n\to \Sigma\) componentwise. 
Then, conditions \eqref{Cont.MainThm.Multi.1} -- \eqref{Cont.MainThm.Multi.3} in Corollary \ref{Cont.MainThm.Multi} are satisfied and thus \(T(\YY_n,\XX_n) \to T(\YY,\XX).\)
\end{corollary}

Denote by \(t_\nu(\boldsymbol{\mu},\Sigma)\) the \(d\)-variate Student-t distribution with \(\nu>0\) degrees of freedom, symmetry vector \(\boldsymbol{\mu}\in \R^d\) and symmetric, positive semi-definite \((d\times d)\)-matrix \(\Sigma.\) Then \(t_\nu(\boldsymbol{\mu},\Sigma)\) belongs to the elliptical class, where the radial variable \(R\) has a density of the form \(g(t)= c[1+t/\nu]^{-(\nu+d)/2},\) which is Lipschitz-continuous with Lipschitz constant \((\nu+d)/(2\nu).\) Hence, the following result is an application of Theorem \ref{theconell}\eqref{theconell2}.

\begin{corollary}[Continuity for Student-t distributions]~\\
Let \((\YY_n,\XX_n)\sim t_{\nu_n}(\boldsymbol{\mu}_n,\Sigma_n),\) \(n\in \N,\) and \((\YY,\XX)\sim t_\nu(\boldsymbol{\mu},\Sigma)\) be \((q+p)\)-dimensional Student-t distributed random vectors with \(\Sigma_n,\) \(n\in \N,\) and \(\Sigma\) positive definite. 
Assume that \(\Sigma_n\to \Sigma\) componentwise and \(\nu_n\to \nu.\) 
Then, conditions \eqref{Cont.MainThm.Multi.1} -- \eqref{Cont.MainThm.Multi.3} in Corollary \ref{Cont.MainThm.Multi} are satisfied and thus \(T(\YY_n,\XX_n) \to T(\YY,\XX).\)
\end{corollary}

For studying the behavior of \(\xi\) and \(T\) in elliptical models, the extreme elements are of particular importance. The following theorem characterizes for the class of elliptical distributions the cases where the measure \(T\) attains the values \(0\) and \(1,\) respectively. 
%For the proof, we make use of stochastic representations of conditional elliptical distributions.

\begin{proposition}[Characterization of extremal cases in elliptical models]\label{thenormodextcas}~\\
Let \((\YY,\XX)\sim \cE({\boldsymbol \mu},\Sigma,\phi)\) with non-degenerate components.
Then for \(\Sigma\) decomposed by \eqref{eqdecsig}, it holds that
\begin{enumerate}[(i)]
\item \label{thenormodextcas1}
\(T(\YY,\XX)=0\) if and only if \(\Sigma_{21}\) is the null matrix (i.e., \(\sigma_{ij}=0\) for all \((i,j)\in \{q+1,\ldots,q+p\} \times \{1,\ldots,q\}\)) and \((\YY,\XX)\) is multivariate normally distributed.
\item \label{thenormodextcas2}
\(T(\YY,\XX)=1\) if and only if \(\rank(\Sigma)=\rank(\Sigma_{22}).\)
\end{enumerate}
\end{proposition}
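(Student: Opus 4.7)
The plan is to reduce the proposition to two purely distributional statements within the elliptical family by invoking the characterizing properties of $T$ recalled in Section~\ref{secmultT}: $T(\YY,\XX)=0$ iff $\XX\perp\YY$, and $T(\YY,\XX)=1$ iff $\YY=f(\XX)$ almost surely for some measurable $f$. Thus (i) amounts to showing that, within the elliptical family with non-degenerate marginals, independence of $\XX$ and $\YY$ forces $\Sigma_{12}=0$ and normality of the generator, while (ii) amounts to characterizing functional dependence through the rank equality. I will handle (i) with the characteristic function and (ii) with the stochastic representation \eqref{radialpart}.

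For (i), the ``$\Leftarrow$'' direction is immediate from the multivariate normal theory. For ``$\Rightarrow$'', independence of $\XX$ and $\YY$ forces the factorization
\begin{equation*}
\phi\bigl(\mathbf{t}_1^{T}\Sigma_{11}\mathbf{t}_1+2\,\mathbf{t}_1^{T}\Sigma_{12}\mathbf{t}_2+\mathbf{t}_2^{T}\Sigma_{22}\mathbf{t}_2\bigr)
=\phi\bigl(\mathbf{t}_1^{T}\Sigma_{11}\mathbf{t}_1\bigr)\,\phi\bigl(\mathbf{t}_2^{T}\Sigma_{22}\mathbf{t}_2\bigr)
\end{equation*}
for all $\mathbf{t}_1\in\R^p,\mathbf{t}_2\in\R^q$. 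Setting $a=\mathbf{t}_1^T\Sigma_{11}\mathbf{t}_1$, $b=\mathbf{t}_2^T\Sigma_{22}\mathbf{t}_2$, $c=\mathbf{t}_1^T\Sigma_{12}\mathbf{t}_2$ and replacing $\mathbf{t}_2$ by $-\mathbf{t}_2$ yields $\phi(a+b+2c)=\phi(a+b-2c)$. If $\Sigma_{12}\ne 0$ one can choose $\mathbf{t}_1,\mathbf{t}_2$ with $c\ne 0$; scaling $(\mathbf{t}_1,\mathbf{t}_2)\mapsto(s\mathbf{t}_1,s\mathbf{t}_2)$ produces $\phi(s^2(a+b+2c))=\phi(s^2(a+b-2c))$ for every $s\in\R$, so $\phi$ is constant on the geometric orbit $\{s^2(a+b+2c)\lambda^n\}_{n\ge 0}$ with $\lambda=(a+b-2c)/(a+b+2c)\in[0,1)$. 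Continuity of $\phi$ and $\phi(0)=1$ then force $\phi\equiv 1$, contradicting non-degeneracy. Hence $\Sigma_{12}=0$, and the factorization reduces to the Cauchy equation $\phi(a+b)=\phi(a)\phi(b)$ on $[0,\infty)$; its continuous solutions are $\phi(u)=e^{-\beta u}$ with $\beta\ge 0$, i.e.\ the Gaussian generator.

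For (ii), write $(\XX,\YY)\eqd\boldsymbol{\mu}+R\,A\,\UU^{(k)}$ as in \eqref{radialpart} with $k=\rank(\Sigma)$ and $A$ of full column rank $k$, and partition $A=\left(\begin{smallmatrix}A_1\\ A_2\end{smallmatrix}\right)$ so that $\Sigma_{11}=A_1A_1^T$ and $\rank(A_1)=\rank(\Sigma_{11})$. If $\rank(\Sigma)=\rank(\Sigma_{11})$, then $A_1$ has full column rank $k$, the left inverse $(A_1^TA_1)^{-1}A_1^T$ exists, and the identity $\YY=\boldsymbol{\mu}_2+A_2(A_1^TA_1)^{-1}A_1^T(\XX-\boldsymbol{\mu}_1)$ shows that $\YY$ is a deterministic function of $\XX$. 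Conversely, if $\rank(\Sigma)>\rank(\Sigma_{11})$, then $\ker(A_1)$ contains a non-zero vector $v$, and because $A$ has trivial kernel one necessarily has $A_2v\ne 0$. Since the components are non-degenerate, $R\UU^{(k)}$ has a non-degenerate spherical distribution on $\R^k$, so the conditional law of $R\UU^{(k)}$ given $\XX=\boldsymbol{\mu}_1+RA_1\UU^{(k)}$ is supported on an affine subspace parallel to $\ker(A_1)$, on which the linear functional induced by $A_2$ is non-trivial; thus $\YY$ is non-degenerate conditional on $\XX$ and cannot be written as $f(\XX)$ a.s.

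The main technical obstacle is the iteration step in (i): carefully choosing $a,b,c$ in the admissible cone (so that all arguments of $\phi$ remain in $[0,\infty)$) and ruling out the boundary cases $a+b\pm 2c=0$, before invoking continuity of $\phi$ to propagate $\phi(0)=1$. The conditional argument in (ii) also requires a brief justification that spherical symmetry of $\UU^{(k)}$ together with the non-degeneracy of $R$ really gives a non-degenerate conditional law along the complement of $\ker(A_1)$, which can be done either via the explicit density of $R\UU^{(k)}$ when available or via an absolute-continuity argument on the sphere.
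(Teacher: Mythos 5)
Your proposal is correct in substance and shares the paper's overall skeleton---both reduce the statement, via the characterizations $T(\YY,\XX)=0\Leftrightarrow\XX\perp\YY$ and $T(\YY,\XX)=1\Leftrightarrow\YY=f(\XX)$ a.s., to two distributional facts about elliptical laws---but you then prove those facts directly, whereas the paper imports them. For (i) the paper simply cites \citet[Section 5(d)]{Cambanis-1981} for ``non-degenerate independent components within an elliptical family force normality'' and the standard normal fact for $\Sigma_{12}=0$; your characteristic-function argument (flipping the sign of $\mathbf{t}_2$, the scaling/geometric-orbit step to rule out $\Sigma_{12}\neq 0$, then Cauchy's multiplicative equation yielding $\phi(u)=e^{-\beta u}$) is a correct self-contained proof of exactly that cited result, and the boundary cases you flag are handled as you indicate since both $a+b\pm 2c$ are values of the quadratic form of the psd matrix $\Sigma$, hence nonnegative. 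For (ii) the paper invokes \citet[Corollary 5]{Cambanis-1981}: the conditional law $(\YY\mid\XX=\xx)$ is elliptical with Schur-complement scale matrix $\Sigma^{*}=\Sigma_{22}-\Sigma_{21}\Sigma_{11}^{-}\Sigma_{12}$ and uses rank additivity $\rank(\Sigma)=\rank(\Sigma_{11})+\rank(\Sigma^{*})$, concluding $\YY=\boldsymbol{\mu}_2+(\XX-\boldsymbol{\mu}_1)\Sigma_{11}^{-}\Sigma_{12}$ a.s.\ iff $\rank(\Sigma^{*})=0$; you instead work with the stochastic representation, which buys an explicit affine formula in the ``if'' direction and avoids the conditional-distribution citation, at the cost of having to argue non-degeneracy of the conditional law in the ``only if'' direction by hand.

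That converse is the one thin spot in your write-up: you assert (and acknowledge as needing justification) that for $v\in\ker(A_1)\setminus\{0\}$ with $A_2v\neq 0$ the conditional law of $R\,\UU^{(k)}$ given $\XX=\xx$ is genuinely non-degenerate along the fiber, not merely supported in it. This is fillable: writing $P$ for the orthogonal projection onto $\ker(A_1)^{\perp}$ one has $R^{2}-q(\XX)=R^{2}\bigl(1-(\UU^{(k)})^{T}P\,\UU^{(k)}\bigr)>0$ a.s.\ on $\{R>0\}$ because $\rank(P)<k$, so conditionally on $\XX=\xx$ the mass along $\ker(A_1)$ sits on spheres of positive radius whose affine hull is all of $\ker(A_1)$, and pushing forward by the non-trivial map $A_2|_{\ker(A_1)}$ gives a non-degenerate law; hence $\YY$ cannot be a function of $\XX$. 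Note the paper's own step ``$\ZZ=\mathbf{0}$ a.s.\ iff $\rank(\Sigma^{*})=0$'' quietly relies on the same positivity of the conditional radial part, so with this detail supplied your route is a complete and somewhat more elementary alternative to the published proof.
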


The proof of Proposition \ref{thenormodextcas} is given in the appendix.
For \(q=1,\) the following example illustrates the behavior of $\xi$ in dependence of the parameter of the equicorrelated multivariate normal distribution.

\begin{figure}[t!]
		\centering  
		\includegraphics[width=0.8\textwidth,trim = 10 46 10 57, clip]{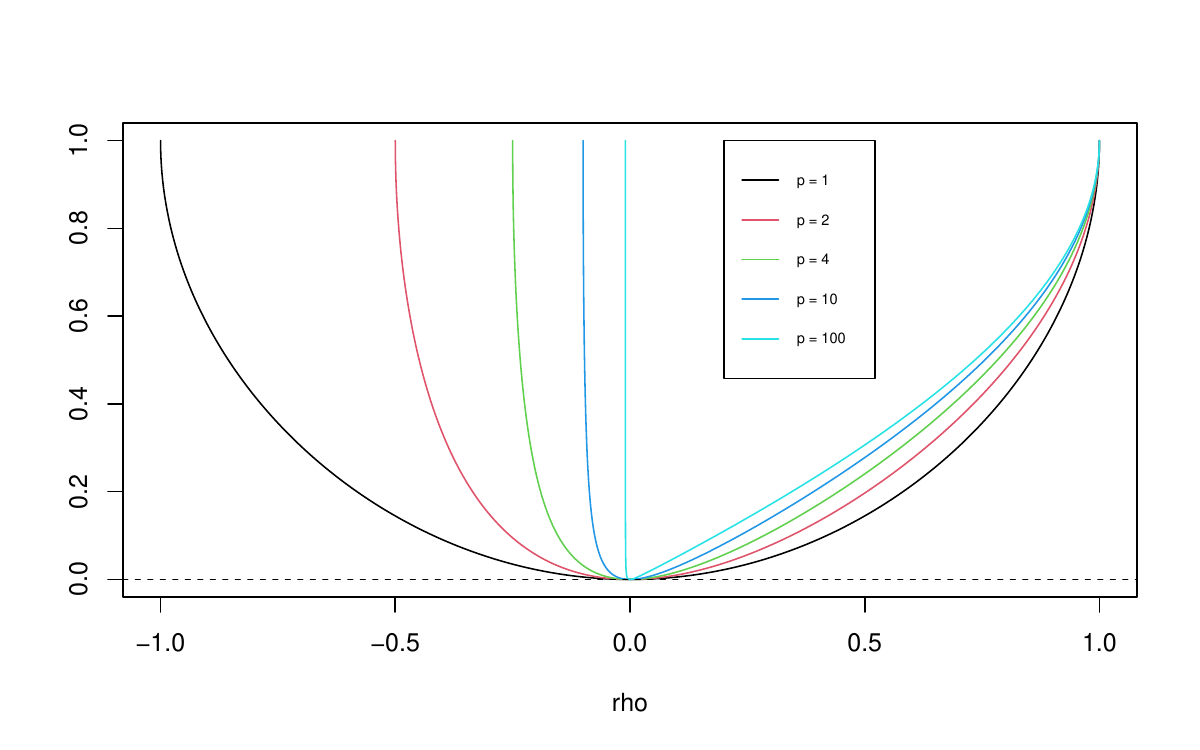}
        \caption{Plots of \(\xi(Y,\XX)\) in dependence on the correlation \(\varrho\) in the equicorrelated normal setting of Example \ref{exeqnor} for various dimensions \(p\in \{1,2,4,10,100\}\) of \(\XX.\)}
		\label{Fig.eqcor_normal}
\end{figure}	

\begin{example}[Equicorrelated normal distribution]\label{exeqnor}~\\
Chatterjee's rank correlation exhibits a closed-form expression for the multivariate normal distribution. For a \((1+p)\)-dimensional random vector \((Y,\XX)\sim N({\bf 0},\Sigma)\) with covariance matrix \(\Sigma = \left(\begin{smallmatrix} 
  \sigma_Y^2 & \Sigma_{12} \\
  \Sigma_{21} &  \Sigma_{22} \end{smallmatrix}\right)\) for \(\sigma_Y>0,\) it holds that
\begin{align}\label{eqpropChatformmGau}
  \xi(Y,\XX) = \frac 3 \pi \arcsin\left(\frac{1+r^2}{2}\right)-\frac 1 2 , 
  \qquad \text{with } r = \sqrt{\Sigma_{12} \Sigma_{22}^{-} \Sigma_{21} / \sigma_Y^2},
\end{align}
where \(\Sigma_{22}^-\) denotes a generalized inverse of \(\Sigma_{22}\) such as the Moore–Penrose inverse; see \citet[Proposition 2.7]{ansari2023MFOCI}.
Assume that \(\Sigma=(\sigma_{ij})\) is equicorrelated with correlation \(\varrho,\) i.e., \(\sigma_{ij}=1\) for \(i = j\) and \(\sigma_{ij} = \varrho\) for \(i\ne j.\) Then \(\Sigma\) is positive semidefinite if and only if \(\varrho\in [-1/p,1].\) The parameter \(r\) in \eqref{eqpropChatformmGau} is given by \(r = \varrho \sqrt{\frac{p}{1 + (p-1) \varrho}}\).
Figure \ref{Fig.eqcor_normal} illustrates the value of \(\xi(Y,\XX)\) in dependence on \(\varrho\) for various dimensions \(p.\) In accordance with Proposition \ref{thenormodextcas}, we observe for all \(p\) that \(\xi(Y,\XX) = 0\) if and only if \(\varrho = 0.\) Further, \(\xi(Y,\XX) = 1\) if and only if \(\varrho\in \{-1/p,1\}.\) 
In the latter case, we have the perfect (linear) dependence \(Y = -\sum_{i=1}^p X_i\) for \(\varrho = -1/p,\) and \(Y = \tfrac 1 p \sum_{i=1}^p X_i\) for \(\varrho = 1.\)
\end{example}

The following example studies continuity of \(T\) in the case of a \(4\)-dimensional normal / Student-t distribution for a bivariate response vector and a bivariate predictor vector (i.e., \(q=2\) and \(p=2\)).

\begin{example}[\(4\)-dimensional normal / Student-t distribution]\label{4dnormal}~\\
Assume that \((\YY,\XX)=(Y_1,Y_2,X_1,X_2)\)
follows a \(4\)-dimensional normal distribution with the three-parametric covariance matrix \(\Sigma\) given by
\begin{align}\label{eq4dnormal}
    \Sigma = \begin{pmatrix}
        1 & \varrho_Y & \varrho_{YX} & \varrho_{YX} \\
        \varrho_Y & 1 & \varrho_{YX} & \varrho_{YX} \\
        \varrho_{YX}& \varrho_{YX} & 1 & \varrho_X  \\
        \varrho_{YX}& \varrho_{YX} & \varrho_X & 1 
    \end{pmatrix}
\end{align} 
for some correlation parameters \(\varrho_Y,\varrho_{YX},\varrho_X\in [-1,1].\) Elementary calculations show that \(\Sigma\) is positive semi-definite if and only if 
\(\varrho_Y \in [-1,1],\) \(\varrho_X \in [-1,1]\) and \(\varrho_{YX}^2 
   \leq \tfrac{1+\varrho_Y}{2}\tfrac{1+\varrho_X}{2}\).
For the multivariate normal distribution, we have the closed-form expression
\begin{align*}
    T(\YY,\XX) 
    %&= 1- \frac{2- T(Y_1|\XX)-T(Y_2|(Y,\XX_1))}{2-T(Y_2|Y_1)}\\
    & = 1- \frac{3- \frac{3}{\pi} \left[\arcsin\left(\frac 1 2 + \frac{\varrho_{YX}^2}{1+\varrho_X}\right)+\arcsin\left(\frac 1 2 + \frac{(1+\varrho_X)\varrho_Y^2-2 (2\varrho_Y-1) \varrho_{YX}^2}{2(1+\varrho_X)-4\varrho_{YX}^2}\right)\right]}{\frac{5}{2}-\frac{3}{\pi}\arcsin\left(\frac{1+\varrho_Y^2}{2}\right)};
\end{align*}
see \citet[Example A.7]{ansari2023MFOCI}.
Figure \ref{Fig.Tq_Gauss-t} illustrates continuity of \(T(\YY,\XX)\) in the correlation parameter \(\varrho_{YX}\) for fixed correlation \(\varrho_X=0.5\) and for several choices of \(\varrho_Y\) in the \(4\)-dimensional normal model (solid lines) and for \((\YY,\XX)\) following a Student-t distribution with scale matrix \(\Sigma\) in \eqref{eq4dnormal} and \(3\) degrees of freedom (dotted lines). We observe that, for fixed \(\varrho_X\) and \(\varrho_Y,\) the parameter constraints due to positive semi-definiteness of \(\Sigma\) generally restrict the range of \(T.\)  Further, the values of \(T\) almost coincide for the normal and the Student-t distribution with \(3\) degrees of freedom. It can be seen from the plots that \(T(\YY,\XX)\) converges to \(0\) for \(\varrho_{YX}\to 0\) only for the normal distribution, which confirms Proposition \ref{thenormodextcas} \eqref{thenormodextcas1}. By the specific choice of the covariance matrix \(\Sigma,\) \(T(\YY,\XX)\) converges to \(1\) if and only if \(\varrho_Y \nearrow 1\) and \(\varrho_{YX}^2\nearrow \frac 3 4,\) using that \(\varrho_X=0.5\). In this limit case, we have \(\rank(\Sigma) = \rank(\Sigma_{22}),\) which confirms Proposition \ref{thenormodextcas} \eqref{thenormodextcas2}.
\end{example}

\begin{figure}[t!]
		\centering
        \includegraphics[width=0.8\textwidth,trim = 25 46 10 57, clip]{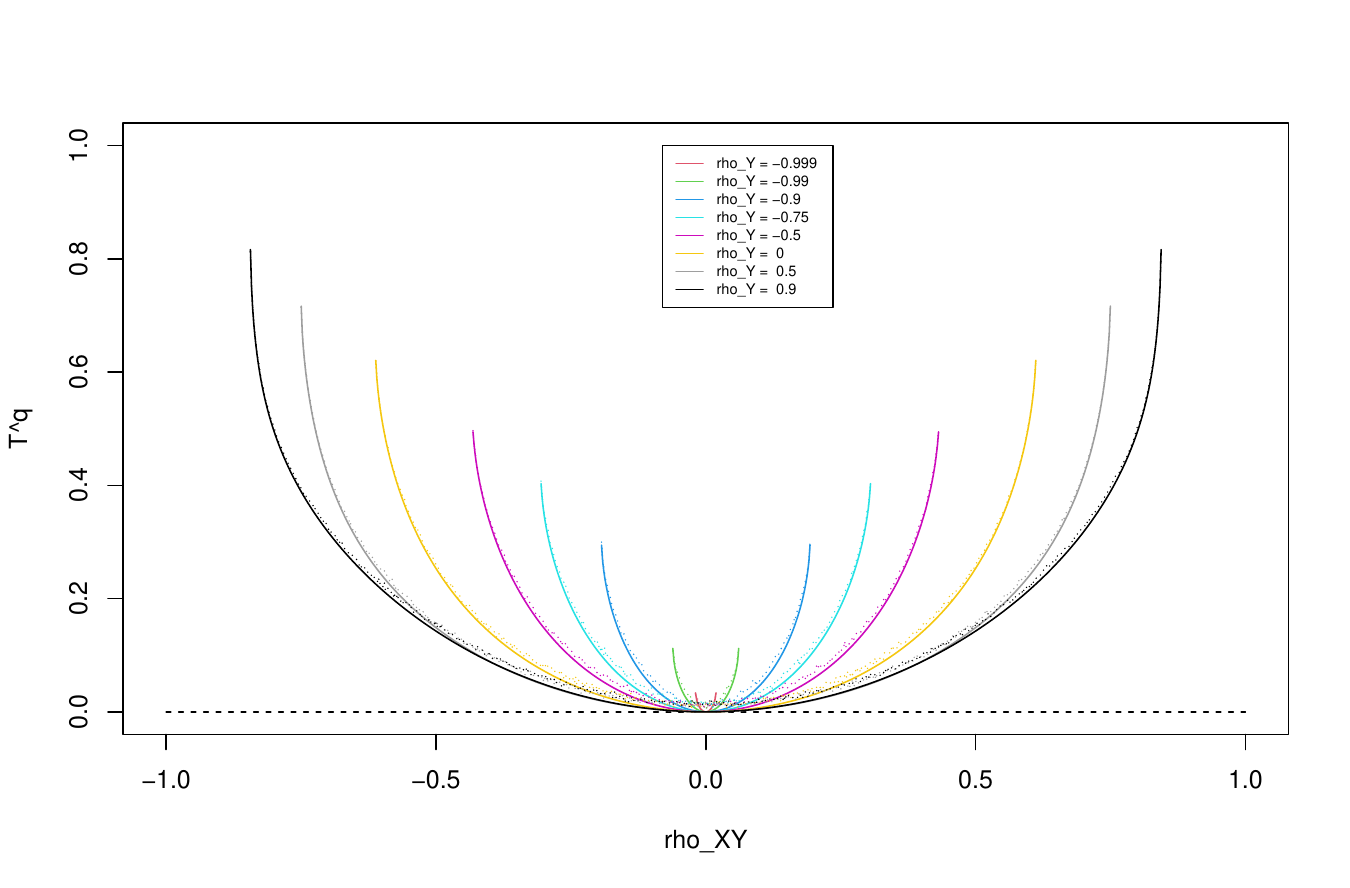}
  \caption{Plots of \(T(\YY,\XX)\) for \((\YY,\XX)=(Y_1,Y_2,X_1,X_2)\) being normally distributed (solid lines) and \(t\)-distributed with \(3\) degrees of freedom (dotted) with covariance matrix \(\Sigma\) given by \eqref{eq4dnormal} in dependence on \(\varrho_{YX}\) for fixed \(\varrho_X=0.5\) and for the choices \(\varrho_{Y}\in \{-0.999,-0.99,-0.9,-0.75,-0.5,0,0.5,0.9\}.\)}
		\label{Fig.Tq_Gauss-t}
\end{figure}

%%%%%%%%%%%%%%%%%%%%%%%%%%%%%%%%%%%%%%%%%%%%%%%%%%%%%%%%%%%%%%%%%%%%%%%%%%%%%%%%%%%%%%%%
\subsection{Continuity within the class of \(\ell_1\)-norm symmetric distributions} \label{Sec.Cont.PM.L1Norm.}

As we show in the following theorem, \(T\) and thus \(\xi\) are weakly continuous within the class of \(\ell_1\)-norm symmetric distributions. 
To this end, denote by \(\SSS_d\) a \(d\)-variate random vector that is uniformly distributed on the unit simplex \(\cS_d = \{\xx\in \R^d \mid \lVert \xx \rVert_1 =1\}.\)
A \(d\)-variate random vector \(\WW\) follows an \emph{\(\ell_1\)-norm symmetric distribution} if there exists a nonnegative random variable \(R\) independent of \(\SSS_d\) such that \(\WW \eqd R\,\SSS_d.\)

\begin{theorem}[Continuity for \(\ell_1\)-norm symmetric distributions]\label{prpl1nsd}~\\
Let \((\YY_n,\XX_n)\eqd R_n \SSS_{q+p}\) be a sequence of \(\ell_1\)-norm symmetric random vectors and let \((\YY,\XX)\eqd R \,\SSS_{q+p}.\)
Assume that \(F_{R_n}\) and \(F_R\) are continuous with \(F_{R_n}(0)=F_R(0)=0\) for all \(n \in \mathbb{N}.\)
If \(R_n\xrightarrow{~d~} R\), then conditions \eqref{Cont.MainThm.Multi.12} - \eqref{Cont.MainThm.Multi.3} in Corollary \ref{Cont.MainThm.Multi} are satisfied and thus \(T(\YY_n,\XX_n) \to T(\YY,\XX).\)
\end{theorem}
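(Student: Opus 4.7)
The plan is to verify conditions \eqref{Cont.MainThm.Multi.12} and \eqref{Cont.MainThm.Multi.3} of Corollary \ref{Cont.MainThm.Multi}. Condition \eqref{Cont.MainThm.Multi.3} is immediate: using the stochastic representation, each marginal $Y_{i,n}$ is distributed as $R_n B^{(1)}$ for $B^{(1)} \sim \mathrm{Beta}(1, p+q-1)$ independent of $R_n$, hence is continuously distributed with no atom at $0$, giving $F_{Y_{i,n}} \circ F_{Y_{i,n}}^{-1}(t) = t$ for all $t \in (0,1)$, and similarly for the limit. For condition \eqref{Cont.MainThm.Multi.12}, the key reduction I would use is the Dirichlet decomposition of $\SSS_{p+q}$: for any partition of $\{1,\ldots,p+q\}$ into blocks of sizes $d_1,\ldots,d_k$, one has $\SSS_{p+q} \eqd (B_1 \SSS_{d_1}^{(1)},\ldots,B_k \SSS_{d_k}^{(k)})$ with $(B_1,\ldots,B_k) \sim \mathrm{Dir}(d_1,\ldots,d_k)$ and mutually independent uniforms $\SSS_{d_j}^{(j)}$ on the respective simplices, all independent of $R_n$. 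Hence every sub-vector of $(\XX_n,\YY_n)$ is itself $\ell_1$-norm symmetric with a radial part of the form $R_n B$ for a suitable Beta variable $B$, and the hypotheses on $R_n$ (continuity, no atom at $0$, weak convergence to $R$) are preserved under such marginalization. Each of the three Markov products in \eqref{Cont.MainThm.Multi.12} therefore reduces to the canonical problem: for $(\WW_n, W_n) \eqd \tilde R_n \SSS_{d+1}$ with scalar $W_n$ and $\tilde R_n \xrightarrow{d} \tilde R$ under the standing assumptions, show $(\WW_n,W_n,W_n') \xrightarrow{d} (\WW,W,W')$.

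To tackle the canonical problem I would pass to characteristic functions. Writing $\SSS_{d+1} \eqd (B \SSS_d, 1-B)$ with $B \sim \mathrm{Beta}(d,1)$ and $\SSS_d \sim U(\cS_d)$ independent, a disintegration shows that conditionally on $\WW_n$ the radial variable $\tilde R_n$ has law proportional to $r^{-d}\mathds{1}\{r>\alpha\}\, dP^{\tilde R_n}(r)$ at $\|\WW_n\|_1 = \alpha$, so that $\E[e^{it W_n} \mid \WW_n] = e^{-it\alpha} k_n(\alpha,t)$ with
\begin{equation*}
  k_n(\alpha,t) = \frac{N_n(\alpha,t)}{D_n(\alpha)}, \quad N_n(\alpha,t) := \int_\alpha^\infty e^{itr} r^{-d}\, dP^{\tilde R_n}(r), \quad D_n(\alpha) := \int_\alpha^\infty r^{-d}\, dP^{\tilde R_n}(r).
\end{equation*}
Combining the conditional independence of $W_n$ and $W_n'$ given $\WW_n$, the independence of $\SSS_d$ from $(\tilde R_n, B)$, and the identity $\|\WW_n\|_1 = \tilde R_n B$ yields
\begin{equation*}
  \varphi_{(\WW_n,W_n,W_n')}(\boldsymbol s,t,t') = \E\!\left[\psi_d(\tilde R_n B\, \boldsymbol s)\, e^{-i\tilde R_n B (t+t')}\, k_n(\tilde R_n B, t)\, k_n(\tilde R_n B, t')\right],
\end{equation*}
where $\psi_d$ is the ($n$-free) characteristic function of $\SSS_d$.

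The main technical step, which I expect to be the chief obstacle, is upgrading the pointwise convergence $k_n \to k$ to uniform convergence on compact subsets of $(0,\infty)$. The denominator $D_n$ is non-increasing in $\alpha$ and converges pointwise to the continuous limit $D$ (using weak convergence of $\tilde R_n$ together with $P(\tilde R = \alpha) = 0$), so Polya's theorem for monotone functions upgrades this to uniform convergence on compacta. For the numerator, pointwise convergence follows analogously, while the bound $|N_n(\alpha_1,t) - N_n(\alpha_2,t)| \leq D_n(\alpha_1 \wedge \alpha_2) - D_n(\alpha_1 \vee \alpha_2)$ combined with the uniform convergence of $D_n$ yields asymptotic equicontinuity; Arzelà--Ascoli then delivers uniform convergence of $N_n$, and hence of $k_n$ on any compact set on which $D$ is bounded away from $0$. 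Coupling $\tilde R_n \to \tilde R$ almost surely via Skorokhod, the integrand in the displayed characteristic function then converges a.s.\ (since $\tilde R B > 0$ a.s., combining uniform convergence of $k_n$ with continuity of $k$ and $\psi_d$) and is uniformly bounded by $1$ in modulus, so bounded convergence yields $\varphi_{(\WW_n,W_n,W_n')} \to \varphi_{(\WW,W,W')}$ pointwise. By Lévy's continuity theorem this proves the canonical claim, and pulling back through the Dirichlet reduction verifies \eqref{Cont.MainThm.Multi.12}; the conclusion $T(\YY_n,\XX_n) \to T(\YY,\XX)$ then follows from Corollary \ref{Cont.MainThm.Multi}.
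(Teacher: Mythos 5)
Your proof is correct, but it takes a genuinely different route from the paper. The paper reduces to $q=1$ via closure of $\ell_1$-norm symmetric distributions under marginalization and then works entirely on the copula level: the survival copulas are Archimedean with generators $\varphi_n\to\varphi$ (pointwise, hence uniformly by convexity and Arzel\`a--Ascoli), and the heavy lifting is delegated to results of Kasper, Dietrich and Trutschnig on convergence of Archimedean copulas---almost-everywhere convergence of the densities $\hat c_{\XX_n}$, weak convergence of the Markov kernels $K_{\hat C_{(\XX_n,Y_n)}}(\uu,\cdot)$, and continuity of the limit kernels---followed by a direct kernel estimate and a transfer lemma to pass from the copula scale back to $(Y_n,Y_n')$. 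You instead stay on the level of the stochastic representation: the Dirichlet aggregation property shows every subvector is again $\ell_1$-norm symmetric with radial part $R_nB$ for an independent Beta variable (so the hypotheses are inherited, which is the same marginalization idea the paper uses, just made explicit), and then you prove weak convergence of $(\WW_n,W_n,W_n')$ by an unconditional characteristic-function computation built on the explicit conditional radial law $\propto r^{-d}\1_{\{r>\alpha\}}\de P^{\tilde R_n}(r)$, with a Polya/equicontinuity argument giving locally uniform convergence of $k_n$ and a Skorokhod coupling plus bounded convergence closing the argument via L\'evy's continuity theorem. This is, in spirit, the paper's \emph{elliptical} strategy (Cambanis' conditional radial formula, convergence of conditional characteristic functions) transplanted to the $\ell_1$-setting, except that you verify condition \eqref{Cont.MainThm.Multi.12} directly rather than through Sweeting's conditional weak convergence machinery. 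What your route buys is self-containedness and elementarity: no Williamson-transform convergence theorems, no Markov-kernel results, no transfer lemma. What the paper's route buys is brevity given the cited literature and an explicitly copula-theoretic statement, which immediately yields the advertised continuity of $T$ within parametric Archimedean families. One small point: the conditional radial disintegration you assert ("a disintegration shows") is correct---it is the $\ell_1$-analogue of Cambanis' formula and follows from the fact that the first $d$ coordinates of a uniform on $\cS_{d+1}$ are uniform on $\{\xx\ge 0:\lVert\xx\rVert_1\le 1\}$---but in a full write-up it should be derived or cited from \citet{McNeil-2009}; also note $|k_n|\le 1$ (so bounded convergence applies) and $D(\tilde R B)>0$ almost surely (since $B<1$ and $F_{\tilde R}$ has no atom at its essential supremum is not even needed---$\tilde R B<\tilde R\le\operatorname{ess\,sup}\tilde R$ suffices), both of which you implicitly use and which do hold.
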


The proof of Theorem \ref{prpl1nsd} (given in the appendix) is based on continuity properties of Archimedean copulas which are the copulas of \(\ell_1\)-norm symmetric distributions; see \cite{McNeil-2009}. 
For Archimedean copulas, weak convergence and weak conditional convergence are equivalent; see \citet[Theorem 4.1]{kasper2022}.
As a direct consequence, we obtain continuity of \(T\) within parametric Archimedean copula families such as the Clayton, the Gumbel-Hougaard or the Frank copula family.

%%%%%%%%%%%%%%%%%%%%%%%%%%%%%%%%%%%%%%%%%%%%%%%%%%%%%%%%%%%%%%%%%%%%%%%%%%%%%%%%%%%%%%%%
%%%%%%%%%%%%%%%%%%%%%%%%%%%%%%%%%%%%%%%%%%%%%%%%%%%%%%%%%%%%%%%%%%%%%%%%%%%%%%%%%%%%%%%%
%\clearpage

%\appendix
\section*{Appendix}

% Zähler für Theoreme auf 0 zurücksetzen
\setcounter{theorem}{0} 

% Das Aussehen der Nummerierung auf "A.1, A.2..." ändern
\renewcommand{\thetheorem}{A.\arabic{theorem}}

%%%%%%%%%%%%%%%%%%%%%%%%%%%%%%%%%%%%%%%%%%%%%%%%%%%%%%%%%%%%%%%%%%%%%%%%%%%%%%%%%%%%%%%%
%\section{Proofs from Section \ref{Sec.Cont.PM}}\label{appendix}

%%%%%%%%%%%%%%%%%%%%%%%%%%%%%%%%%%%%%%%%%%%%%%%%%%%%%%%%%%%%%%%%%%%%%%%%%%%%%%%%%%%%%%%%
%\subsection{Proof of Theorem \ref{theconell}}

For the proof of Theorem \ref{theconell}, we make use of the following result on continuous convergence of densities; see \citet[Theorem 2]{Sweeting-1986}.

\begin{lemma}\label{lemmswee}
    Let \((G_n)_{n\in \N}\) be a sequence of absolutely continuous (w.r.t. the Lebesgue measure) distributions on \((0,\infty)\) and let \((g_n)_{n\in \N}\) be a corresponding sequence of densities. Then the following two statements are equivalent:
    \begin{enumerate}[(i)]
        \item \label{lemmswee1} \((g_n)_{n\in \N}\) is asymptotically uniformly equicontinuous and bounded, and \(G_n\xrightarrow[]{d}G,\)
        \item \label{lemmswee2} \(g_n\to g\) uniformly in \(\R,\) where \(g\) is the uniformly continuous density of \(G.\)
    \end{enumerate}
\end{lemma}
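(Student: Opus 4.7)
The plan is to prove the equivalence by an Urysohn-type subsequence argument based on Arzel\`{a}-Ascoli, with tail control coming from the tightness of $(G_n)_{n \in \N}$ implied by weak convergence together with the asymptotic uniform equicontinuity.

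The implication \eqref{lemmswee2} $\Rightarrow$ \eqref{lemmswee1} is the straightforward one. Asymptotic uniform equicontinuity of $(g_n)$ follows from a triangle-inequality argument using the uniform continuity of $g$ combined with the uniform closeness $\|g_n - g\|_\infty \to 0$. Boundedness of $(g_n)$ follows from uniform convergence to $g$, which is itself bounded since every uniformly continuous integrable function on $\R$ is bounded. Weak convergence $G_n \xrightarrow{~d~} G$ is a consequence of Scheff\'{e}'s theorem: pointwise convergence of probability densities to a probability density yields $L^1$ convergence of densities and hence weak convergence of the laws.

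For the non-trivial direction \eqref{lemmswee1} $\Rightarrow$ \eqref{lemmswee2}, I would rely on a Urysohn-type subsequence principle. Given an arbitrary subsequence $(n_k)_{k \in \N}$, the asymptotic uniform equicontinuity together with the pointwise boundedness makes the tail family $\{g_n : n \geq n_0\}$ equicontinuous and pointwise bounded on each point of $\R$, hence locally uniformly bounded on compacts. By Arzel\`{a}-Ascoli applied on each compact interval $[-K, K]$ together with a diagonal extraction, there exists a further subsequence $(n_{k_j})_{j \in \N}$ converging uniformly on compact subsets of $\R$ to some continuous function $h$. To identify $h$ with the density of $G$, I would combine weak convergence with this compact-uniform convergence: for continuity points $a < b$ of $G$,
\begin{align*}
    \int_a^b h(x) \de x
    = \lim_{j \to \infty} \int_a^b g_{n_{k_j}}(x) \de x
    = \lim_{j \to \infty} G_{n_{k_j}}((a, b])
    = G((a, b])\,,
\end{align*}
and since continuity points of $G$ are dense in $\R$, $h$ must be the continuous density of $G$. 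Setting $g := h$ and invoking the subsequence principle yields $g_n \to g$ uniformly on compact subsets of $\R$.

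The main obstacle is upgrading compact-uniform convergence to uniform convergence on all of $\R$, which requires tail control. Here I would combine the tightness of $(G_n)$ furnished by weak convergence with the asymptotic uniform equicontinuity. Given $\eta > 0$, pick $\delta > 0$ from asymptotic uniform equicontinuity at level $\eta/2$: any $x_0$ with $g_n(x_0) > \eta$ forces $g_n \geq \eta/2$ on $(x_0 - \delta, x_0 + \delta)$, contributing at least $\delta \eta$ to the integral of $g_n$ on that interval. By tightness, $M$ can be chosen such that $\int_{|x| > M} g_n(x) \de x < \delta \eta$ for all $n$, which rules out any $x_0$ with $|x_0| > M + \delta$ satisfying $g_n(x_0) > \eta$ for $n$ large. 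The analogous tail bound for $g$ follows by passing to the limit, and combining this with the already established uniform convergence on $[-(M + \delta), M + \delta]$ yields uniform convergence on all of $\R$, completing the proof.
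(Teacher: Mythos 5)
Your argument is correct, but it takes a different route from the paper: the paper does not prove this lemma at all, stating it as a variant of \citet[Theorem 2]{Sweeting-1986} (a converse to Scheff\'{e}'s theorem on continuous convergence of densities) and using it as a black box, whereas you give a self-contained proof. Your easy direction (uniform convergence to a uniformly continuous, hence bounded, integrable \(g\), plus Scheff\'{e} for the weak convergence) and your hard direction (Arzel\`{a}-Ascoli with diagonal extraction on compacts, identification of every subsequential limit with the continuous density of \(G\) via continuity points, the subsequence principle, and the tightness-plus-equicontinuity argument ruling out mass of \(g_n\) above level \(\eta\) outside a compact set) are all sound; the upgrade from locally uniform to globally uniform convergence is exactly the right use of the \(\delta\eta\) integral lower bound against the uniform tail bound. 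Two details are glossed but harmless: Arzel\`{a}-Ascoli must be run with \emph{asymptotic} equicontinuity (discard the finitely many indices below \(n(\varepsilon)\) at each level \(\varepsilon=1/k\) before diagonalizing, which also gives the uniform continuity of the limit \(h\) and local uniform boundedness of the tail from the pointwise bound), and since the \(G_n\) live on \((0,\infty)\) the statement should be read on that half-line (or with the convention \(g=0\) on \((-\infty,0]\)), which does not affect the argument. What each approach buys: the citation keeps the paper short and matches the ``continuous convergence'' formulation used in the proof of Theorem \ref{theconell}, while your proof is elementary, self-contained, and makes explicit where each hypothesis (pointwise boundedness, asymptotic uniform equicontinuity, tightness from weak convergence) enters.
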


%%%%%%%%%%%%%%%%%%%%%%%%%%%%%%%%%%%%%%%%%%%%%%%%%%%%%%%%%%%%%%%%%%%%%%%%%%%%%%%%%%%%%%%%
\begin{proof} [Proof of Theorem \ref{theconell}.]
Without loss of generality, let \(\boldsymbol{\mu}_n=\boldsymbol{\mu}=0\) for all \(n\in \N\).
We first prove Theorem \ref{theconell} under assumption \eqref{theconell1}.    
To verify condition \eqref{Cont.MainThm.Multi.1} of Corollary \ref{Cont.MainThm.Multi}, we obtain for the characteristic functions that
    \begin{align}\label{protheconell1}
        \varphi_{(\YY_n,\XX_n)}(\ttt) 
        = \phi (\ttt\Sigma_n \ttt^T) 
        \to \phi (\ttt\Sigma \ttt^T) 
        = \varphi_{(\YY,\XX)}(\ttt) ~~~\text{for all } \ttt\in \R^{q+p},
    \end{align}
where we use for the convergence that \(\phi\) is the characteristic function of a spherical distribution, thus being uniformly continuous.
Due to L\'{e}vy's continuity theorem (see, e.g.~\citet[Theorem 5.3]{Kallenberg-2002}), \eqref{protheconell1} then implies \((\YY_n,\XX_n)\xrightarrow{~d~}(\YY,\XX)\).
\\
In order to verify condition \eqref{Cont.MainThm.Multi.2} of Corollary \ref{Cont.MainThm.Multi}, we restrict ourselves to the case \(q=1\). 
The general case follows similarly, considering subvectors of the form \((Y_i,(Y_1,\ldots,Y_{i-1},\XX))\) and \((Y_i,(Y_1,\ldots,Y_{i-1})\), respectively, for \(i\in \{2,\ldots,q\}\).
As in the proof of Theorem \ref{Cont.MainLemma}, we use \citet[Lemma 3 \& Lemma 4]{Sethuraman-1961} in combination with the characterization of uniform conditional convergence in \citet[Theorem 4]{Sweeting_1989}.
Let \(V\) be the column space of \(\Sigma_{22}\) in \eqref{eqdecsig} without the null vector.
    %We show that, for all \(y\in \R,\) the conditional distribution functions \((F_{Y_n|\XX_n=\xx}(y))_{n\in \N}\) are asymptotically equicontinuous in \(\xx\in V\). 
    %First, assume that \(\Sigma\) is positive definite and thus invertible.
    %Then, 
    According to \citet[Corollary 5]{Cambanis-1981}, the conditional distributions admit the stochastic representations
    \begin{align}
    \begin{split}\label{eqtheconell1}
        \left(Y_n\mid \XX_n=\xx\right) 
        &\eqd \mu_{n,\xx} + R_{q_n(\xx)}\sigma_n^* \UU^{(1)},
        \\
        \left(Y\mid \XX = \xx\right) 
        &\eqd \mu_\xx + R_{q(\xx)}\sigma^* \UU^{(1)} ,
        \end{split}
    \end{align}
    where 
    \begin{align}
    \sigma_n^* 
    \nonumber &:= \Sigma_{n,11}-\Sigma_{n,12}\Sigma_{n,22}^{-1}\Sigma_{n,21},
    & \sigma^* 
    &:= \Sigma_{11}-\Sigma_{12}\Sigma_{22}^{-1}\Sigma_{21}, 
    \\
    \nonumber \mu_{n,\xx} 
    &:= \Sigma_{n,12} \Sigma_{n,22}^{-1}\xx, 
    &\mu_\xx
    &:= \Sigma_{12}\Sigma_{22}^{-1}\xx,
    \\
    \nonumber q_n(\xx)
    &:= \xx^T \Sigma_{n,22}^{-1} \xx, 
    & q(\xx)
    &:= \xx^T\Sigma_{22}^{-1} \xx,
    \\
   \label{ihidvij} R_{q_n(\xx)}
    &\eqd ((R^2-q_n(\xx))^{1/2} \, | \, \XX_n = \xx), 
    & R_{q(\xx)}
    &\eqd ((R^2-q(\xx))^{1/2} \, | \, \XX = \xx).
    \end{align}
    Recall that \(R\) in \eqref{ihidvij} is the radial part (as in \eqref{radialpart}), which, according to assumption \eqref{theconell1}, is equal for both \((\YY_n,\XX_n)\) and \((\YY,\XX)\).
    Here \(\Sigma_{n,11},\Sigma_{n,12},\Sigma_{n,21},\) and \(\Sigma_{n,22}\) are the submatrices of the decomposition of \(\Sigma_n\) similar to \eqref{eqdecsig}.
    From \(\Sigma_n \to \Sigma\) it then follows that
    \begin{align}
        \label{sigkon1} \sigma_n^* &\to \sigma^*,\\
        \label{sigkon2} \mu_{n,\xx} &\to \mu_\xx , ~~~~~ \text{where } \mu_\xx \text{ is continuous in } \xx,\\
        \label{sigkon3} q_n(\xx) &\to q(\xx), ~~~ \text{where } q(\xx) \text{ is continuous in } \xx,
    \end{align}
    using that \(\Sigma_{n,22}^{-1} \to \Sigma_{22}^{-1}\) due to the positive definiteness of \(\Sigma_n\) and \(\Sigma.\) 
    %By applying \citet[Corollary 5]{Cambanis-1981} a second time, now 
    Using the representation of the conditional radial distribution function \citet[Eq.~(15)]{Cambanis-1981}, we obtain for fixed \(r>0\) and for $\xx_n \to \xx$ that
     \begin{align}\label{aaba2}
        F_{R_{q_n(\xx_n)}}(r) &= \frac{\int_{(\sqrt{q_n(\xx_n)},\sqrt{r^2+q_n(\xx_n)}]}\left(s^2-   q_n(\xx_n)\right)^{-1/2}s^{-(p-1)} \de F_R(s)}{\int_{(\sqrt{q_n(\xx_n)},\infty)}\left(s^2-q_n(\xx_n)\right)^{-1/2}s^{-(p-1)} \de F_R(s)} \\
        \label{aaba21}\xrightarrow{n\to \infty} &\frac{\int_{(\sqrt{q(\xx)},\sqrt{r^2+q(\xx)}]}\left(s^2- q(\xx)\right)^{-1/2}s^{-(p-1)} \de F_R(s)}{\int_{(\sqrt{q(\xx)},\infty)}\left(s^2-q(\xx)\right)^{-1/2}s^{-(p-1)} \de F_R(s)} = F_{R_{q(\xx)}}(r),
    \end{align} 
    where the convergence is immediate from dominated convergence and the continuity of \(F_R\).
    Thus, $R_{q_n(\xx_n)} \xrightarrow[]{~d~} R_{q(\xx)}$  yields  $R_{q_n(\xx_n)} \, \sigma_n^* \, \UU^{(1)} \xrightarrow[]{~d~} R_{q(\xx)} \, \sigma^* \, \UU^{(1)}$ using Slutzky's theorem. Hence, 
    \begin{align*}
    \varphi_{Y_n|\XX_n=\xx_n}(t) 
    & = \varphi_{\mu_{n,\xx_n} + R_{q_n(\xx_n)} \, \sigma_n^* \, \UU^{(1)}}(t) 
     = \exp(it \, \mu_{n,\xx_n}) \, \varphi_{R_{q_n(\xx_n)} \, \sigma_n^* \, \UU^{(1)}}(t)
    \\
    & \xrightarrow[n\to \infty]{} \exp(it \, \mu_{\xx}) \, \varphi_{R_{q(\xx)} \, \sigma^* \, \UU^{(1)}}(t)
      = \varphi_{Y|\XX=\xx}(t)
    \end{align*}
    for all $t\in \R$, due to \eqref{eqtheconell1}.
    By a similar reasoning, we further have $\varphi_{Y|\XX=\xx_n}(t) 
    \xrightarrow[n\to \infty]{} \varphi_{Y|\XX=\xx}(t)$
    for all $t\in \R$.
    Now, applying \citet[Lemma 3 \& Lemma 4]{Sethuraman-1961} we obtain
    \begin{align}\label{theconell.Eq3}
    \int_{\R} f(y) \de P^{Y_n|\XX_n=\xx} 
    &\xrightarrow{~u~} \int_{\R} f(y) \de P^{Y|\XX=\xx} ~~~\text{on } V \text{ for all }f \in \cC_b(\R).
    \end{align}
    Using the characterization of uniform conditional convergence in \citet[Theorem 4]{Sweeting_1989} together with \(\XX_n\xrightarrow{~d~}\XX\) yields condition \eqref{Cont.MainThm.Multi.2} of Corollary \ref{Cont.MainThm.Multi}.\\
Recall that condition \eqref{Cont.MainThm.Multi.12} of Corollary \ref{Cont.MainThm.Multi} is automatically satisfied as a consequence of Theorem \ref{Cont.MainLemma}.
%%% Comments
%    The latter further implies continuity of \(\xx \mapsto F_{R_{q(\xx)}}\) and continuity of \(y \mapsto F_{Y|\XX=\xx}(y)\), both of which we will make use of right away.
%    The above convergence together with \eqref{sigkon1} yields \(R_{q_n(\xx)} \sigma_n^* U^{(1)} \xrightarrow{d} R_{q(\xx)}\sigma^* U^{(1)}\) which, together with \eqref{sigkon2} and the stochastic representation in \eqref{eqtheconell1}, then gives \(F_{Y_n|\XX_n=\xx}(y) \to F_{Y|\XX=\xx}(y)\) for all \(y\in \R\) and \(\xx\in V\). 
%    Using the continuity of \(\xx \mapsto F_{Y|\XX=\xx}(y)\) (due to the continuity of \(\xx \mapsto F_{R_{q(\xx)}}\) and \(\xx \mapsto \mu_\xx\)) and the continuous convergence of \((F_{Y_n|\XX_n=\xx}(y))_{n\in \N}\) (due to \eqref{eqtheconell1} - \eqref{aaba21}), we can conclude that the conditional distribution functions \((F_{Y_n|\XX_n=\xx}(y))_{n\in \N}\) are asymptotically equicontinuous in \(\xx\in V\) for all \(y\in \R\). \\
\\ 
Since \(F_{Y_{i,n}}\) and \(F_{Y_i}\) are continuous for all \(i\) and \(n,\) condition \eqref{Cont.MainThm.Multi.3} of Corollary \ref{Cont.MainThm.Multi} is trivially satisfied.
This proves the first part of Theorem \ref{theconell}.

For the proof of Theorem \ref{theconell} under assumption \eqref{theconell2}, we extend the above proof under assumption \eqref{theconell1} to the case where the radial variables depend on \(n\). 
To verify condition \eqref{Cont.MainThm.Multi.1} of Corollary \ref{Cont.MainThm.Multi}, we obtain for the characteristic functions that
\begin{align*}
  \varphi_{(\YY_n,\XX_n)}(\ttt) 
  = \phi_n (\ttt\Sigma_n \ttt^T) \to \phi (\ttt\Sigma \ttt^T) 
  = \varphi_{(\YY,\XX)}(\ttt) ~~~\text{for all } \ttt\in \R^{q+p},
\end{align*}
where we use for the convergence that \(\phi\) and \((\phi_n)_{n \in \mathbb{N}}\) are the characteristic functions of spherical distributions with \(\phi_n \to \phi\) pointwise by assumption. L\'{e}vy's continuity theorem (see, e.g.~\citet[Theorem 5.3]{Kallenberg-2002}) then implies \(\phi_n \to \phi\) uniformly on compact sets, hence  \(\phi_n \to \phi\) continuously. Applying L\'{e}vy's continuity theorem a second time then gives \((\YY_n,\XX_n)\xrightarrow{~d~}(\YY,\XX)\).
\\
We now verify condition \eqref{Cont.MainThm.Multi.2} of Corollary \ref{Cont.MainThm.Multi} and, again, restrict ourselves to the case \(q=1\).
Notice that the stochastic representation of the conditional distribution \(Y_n\mid \XX_n=\xx\) in \eqref{eqtheconell1} modifies to
\begin{align}\label{eqtheconell2}
    (Y_n\mid \XX_n=\xx) \eqd \mu_{n,\xx} + R_{n,q_n(\xx)}  \sigma_n^* \UU^{(1)}
\end{align}
where \(R_{n,q_n(\xx)}\eqd ((R_n^2-q_n(\xx))^{1/2}\mid \XX_n=\xx),\) \(n\in \N.\)
As before, we prove the assertion using \citet[Lemma 3 \& Lemma 4]{Sethuraman-1961} in combination with the characterization of uniform conditional convergence in \citet[Theorem 4]{Sweeting_1989}.
Therefore, we need to show that \(R_{n,q_n(\xx_n)} \xrightarrow[]{d}R_{q(\xx)}\) for all \(\xx_n, \xx\in V\) such that \(\xx_n \to \xx\), where \(V\) are the inner points of the closed support of \(\XX.\) 
To do so, we verify that the associated distribution functions converge pointwise.
For \(a>0\), consider the modified denominator of the right-hand side in \eqref{aaba2} given by
\begin{align*}
    G_n(a^2)&:=\int_{(a,\infty)} (s^2-a^2)^{-1/2} s^{-(p-1)} \de F_{R_n}(s)\\
    &\phantom{:}= \int_{(a,\infty)} (s^2-a^2)^{-1/2} s^{-(p-1)} f_n(s) \de s
    \phantom{:}= \int_{(0,\infty)} (z^2+a^2)^{-p/2} f_n(\sqrt{z^2+a^2}) \de z,
\end{align*}
where we apply that \(f_n\) is the density of \(R_n\) and where the second equality follows from the transformation \(z^2:=s^2-a^2.\)
Using Lemma \ref{lemmswee}, we obtain from the assumptions that also \(R\) has a density which we denote by \(f.\) We denote the denominator of \eqref{aaba21} as
\begin{align*}
    G(a^2)&:=\int_{(a,\infty)} (s^2-a^2)^{-1/2} s^{-(p-1)} \de F_{R}(s) = \int_{(0,\infty)} (z^2+a^2)^{-p/2} f(\sqrt{z^2+a^2}) \de z.
\end{align*}
Similarly, the modified numerators of \eqref{aaba2} and \eqref{aaba21} are
\begin{align*}
    H_n(a^2) &:= \int_{(a,\sqrt{r^2+a^2})} (s^2-a^2)^{-1/2} s^{-(p-1)} \de F_{R_n}(s) = \int_{(0,r)} (z^2+a^2)^{-p/2} f_n(\sqrt{z^2+a^2}) \de z,\\
    H(a^2) &:= \int_{(a,\sqrt{r^2+a^2})} (s^2-a^2)^{-1/2} s^{-(p-1)} \de F_{R}(s) = \int_{(0,r)} (z^2+a^2)^{-p/2} f(\sqrt{z^2+a^2}) \de z.
\end{align*}
We aim to prove that 
\begin{align}\label{prognqn}
    G_n(q_n(\xx_n)) \to G(q(\xx))~~~ \text{for all } \xx_n, \xx \in V  \text{ with } \xx_n \to \xx.
\end{align}
Similarly, \(H_n(q_n(\xx_n)) \to H(q(\xx))\) and thus \(F_{R_{n,q_n(\xx_n)}}(r) \to F_{R_{q(\xx)}}(r)\) for all \(\xx_n,\xx\in V\) with \(\xx_n \to \xx\) and for all \(r>0.\)
\\
To prove \eqref{prognqn}, we first show that 
\begin{align}\label{prognqn1}
    G_n\to G ~~~ \text{uniformly on } [K_1,K_2]
\end{align}
for any \(0 < K_1 < K_2.\)
Therefore, let \(\varepsilon>0\) and \(a^2 \in [K_1,K_2].\) 
Since \(f_n\to f\) pointwise due to Lemma \ref{lemmswee}, applying Scheff\'{e}s Lemma, there exists \(n_0\in \N\) such that 
\begin{align}
    \left|\int_{(r,\infty)} \underbrace{(z^2+a^2)^{-{p/2}}}_{\leq r^{-p}}  \left(f_n(\sqrt{z^2+a^2})  -  f(\sqrt{z^2+a^2})\right) \de z \right| < \varepsilon
\end{align}
for all \(n\geq n_0.\) Further, due to Lemma \ref{lemmswee}, there exists \(n_1\in \N\) such that \(\sup_{y>0} |f_n(y)-f(y)|<\varepsilon\) for all \(n\geq n_1\).
%\begin{align}\label{eegge1}
%    \sup_{y>0} |f_n(y)-f(y)|<\varepsilon ~~~\text{for all } n\geq n_1.
%\end{align} 
%If \(y\mapsto y^{-1} \sup_n \{f_n(y)\} = y^{-1} M(y)\) is integrable on \((r,\infty),\) applying the dominated convergence theorem, there exists \(n_1\in \N\) such that
%for all \(p\geq 1,\) using that \((z^2+a^2)^{-p/2} \leq z^{-p}.\)
%and applying Lemma \ref{lemmaxoj} where \(M(y)\to 0\) as \(y\to \infty.\) 
It follows that
\begin{align*}
    \left| G_n(a^2)-G(a^2)\right| &\leq \int_{(0,r)} \underbrace{(z^2+a^2)^{-{p/2}}}_{ < a^{-p}}\underbrace{\left| f_n(\sqrt{z^2+a^2})-f(\sqrt{z^2+a^2})\right|}_{<\varepsilon} \de z \\
   &  + \underbrace{\left|\int_{(r,\infty)} (z^2+a^2)^{-{p/2}}  \left(f_n(\sqrt{z^2+a^2})  -  f(\sqrt{z^2+a^2})\right) \de z \right|}_{~~~~~~~~~~~~~ < \varepsilon \text{ for all } n \geq n_1} \\
   & <  (a^{-p} \, r  + 1 ) \,\varepsilon 
   \leq (K_1^{-p} \, r  + 1 ) \,\varepsilon ,
\end{align*}
for all \(n\geq \max\{n_0,n_1\},\) which proves \eqref{prognqn1}.
Now, let \(\xx_n,\xx\in V\) with \(\xx_n \to \xx\). 
Then the sequence \((q_n(\xx_n))_n\) converges by \eqref{sigkon3} to \(q(\xx).\) 
Since \(\xx_n, \xx \ne \boldsymbol{0}\) and \(\Sigma\) as well as \(\Sigma_n\) are positive definite, we have \(q(\xx)>0\) and \(q_n(\xx_n)>0\) for all \(n.\) Hence, there exist \(0<L_1<L_2\) such that \(q_n(\xx_n),q(\xx)\in [L_1,L_2]\) for all \(n.\) 
%Now, let \(\varepsilon>0.\) 
Since \((G_n)_{n\in \N}\) converges uniformly on \([L_1,L_2],\) there exists \(N_1\in \N\) such that \(|G_n(a^2)-G(a^2)|<\varepsilon/2\) for all \(n\geq N_1\) and for all \(a^2 \in [L_1,L_2]\). 
Since $G$ is continuous and \(q_n(\xx_n)\to q(\xx),\) there exists \(N_2\in \N\) such that \(|G(q_n(\xx_n)) - G(q(\xx))| < \varepsilon/2\) for all \(n\geq N_2.\) 
It follows that
\begin{align*}
    |G_n(q_n(\xx_n)) - G(q(\xx)) | &\leq |G_n(q_n(\xx_n)) -G(q_n(\xx_n))|  + | G(q_n(\xx_n)) - G(q(\xx)) | 
    < \varepsilon/2 + \varepsilon/2 =  \varepsilon 
\end{align*}
for all \(n\geq \max\{N_1,N_2\},\) which proves \eqref{prognqn}.
Since $R_{n,q_n(\xx_n)} \xrightarrow[]{~d~} R_{q(\xx)}$, we obtain with Slutzky's theorem that $R_{n,q_n(\xx_n)} \, \sigma_n^* \, \UU^{(1)} \xrightarrow[]{~d~} R_{q(\xx)} \, \sigma^* \, \UU^{(1)}$. The latter implies 
\begin{align*}
  \varphi_{Y_n|\XX_n=\xx_n}(t) 
  & = \varphi_{\mu_{n,\xx_n} + R_{n,q_n(\xx_n)} \, \sigma_n^* \, \UU^{(1)}}(t) 
 = \exp(it \, \mu_{n,\xx_n}) \, \varphi_{R_{n,q_n(\xx_n)} \, \sigma_n^* \, \UU^{(1)}}(t)
  \\
  & \xrightarrow[n\to \infty]{} \exp(it \, \mu_{\xx}) \, \varphi_{R_{q(\xx)} \, \sigma^* \, \UU^{(1)}}(t)
    = \varphi_{Y|\XX=\xx}(t)
\end{align*}
for all $t\in \R$, due to \eqref{eqtheconell1} and \eqref{eqtheconell2} .
By a similar reasoning, we have $\varphi_{Y|\XX=\xx_n}(t) 
\xrightarrow[n\to \infty]{} \varphi_{Y|\XX=\xx}(t)$
for all $t\in \R$.
Now, applying \citet[Lemma 3 \& Lemma 4]{Sethuraman-1961} we obtain
\begin{align*}
  \int_{\R} f(y) \de P^{Y_n|\XX_n=\xx} 
  &\xrightarrow{~u~} \int_{\R} f(y) \de P^{Y|\XX=\xx} ~~~\text{on } V \text{ for all }f \in \cC_b(\R).
\end{align*}
Using the characterization of uniform conditional convergence in \citet[Theorem 4]{Sweeting_1989} together with \(\XX_n\xrightarrow{~d~}\XX\) yields condition \eqref{Cont.MainThm.Multi.2} of Corollary \ref{Cont.MainThm.Multi}.
\\
Since \(F_{Y_{i,n}}\) and \(F_{Y_i}\) are continuous for all \(i\) and \(n,\) condition \eqref{Cont.MainThm.Multi.3} of Corollary \ref{Cont.MainThm.Multi} is trivially satisfied, which proves the second part of Theorem \ref{theconell}.
\end{proof}

%%%%%%%%%%%%%%%%%%%%%%%%%%%%%%%%%%%%%%%%%%%%%%%%%%%%%%%%%%%%%%%%%%%%%%%%%%%%%%%%%%%%%%%%

\begin{proof}[Proof of Proposition \ref{thenormodextcas}.]
\eqref{thenormodextcas1}
Due to \citet[Theorem 2.1]{ansari2023MFOCI}, \(\YY\) and \(\XX\) are independent if and only if \(T(\YY,\XX)=0.\) As a property of elliptical distributions, \(\YY\) and \(\XX\) can only be non-degenerate independent if they follow a normal distribution; see e.g. \citet[Section 5(d)]{Cambanis-1981}.
%Note that \(\XX\) and \(\YY\) are non-degenerate because \(\sigma_{ii}>0\) for all \(i.\)
Hence, the assertion follows from the well-known property of multivariate normal distributions that \(\YY\) and \(\XX\) are independent if and only if \(\Sigma_{21}\) is the null matrix, see e.g.~\citet[Corollary 2 in Section 2.3]{FangKotz-1990}.\\
To show statement \eqref{thenormodextcas2},
consider the decomposition \({\boldsymbol \mu}=({\boldsymbol \mu}_1,{\boldsymbol \mu}_2),\) \({\boldsymbol \mu}_1\in \R^q,\) \({\boldsymbol \mu}_2\in \R^p\) and define \(k:=\rank(\Sigma)-\rank(\Sigma_{22})\geq 0.\) Then it holds that
%\begin{align}\label{eqthenormodextcas2}
    \((\YY\mid \XX=\xx) \sim \cE({\boldsymbol \mu}_\xx,\Sigma^*,\phi_\xx)\)
%\end{align}
with a stochastic representation
\begin{align}\label{repprofsell}
    (\YY \mid \XX=\xx)\eqd {\boldsymbol \mu}_\xx + \ZZ
\end{align}
for \({\boldsymbol \mu}_\xx={\boldsymbol \mu}_1+(\xx-{\boldsymbol \mu}_2)\Sigma_{22}^-\Sigma_{21}\) and 
\(\Sigma^*:=\Sigma_{11}-\Sigma_{12}\Sigma_{22}^-\Sigma_{21},\) where \(\rank(\Sigma^*)=k\) and where \(\ZZ\) is a \(q\)-dimensional \(\cE( {\bf{0}}, \Sigma^*,\phi_\xx)\)-distributed random vector with generator \(\phi_\xx\) depending on \(\xx,\) \({\boldsymbol \mu}_2\) and \(\Sigma_{22}^-\); see e.g.~\citet[Corollary 5]{Cambanis-1981}. Here \(A^-\) denotes a generalized inverse of a symmetric matrix \(A\) with positive rank. It follows by the characterization of perfect dependence \citet[Theorem 2.1]{ansari2023MFOCI} that
\begin{align*}
&&T(\YY,\XX) &=1 \\
&\Longleftrightarrow  &\YY &= f(\XX) ~~~\text{a.s.} \\
&\Longleftrightarrow &\YY &= {\boldsymbol \mu}_1 + (\XX-{\boldsymbol \mu}_2) \Sigma_{22}^-\Sigma_{21} ~~~ \text{a.s.} \\
&\Longleftrightarrow & k&=0\\
&\Longleftrightarrow &\rank(\Sigma)&=\rank(\Sigma_{22}),
\end{align*}
where the second equivalence follows with \eqref{repprofsell}. For the third equality, we observe that \(\ZZ=\bold{0}\) almost surely if and only if \(\rank(\Sigma^*)=0.\) Finally, the last equality holds true by the definition of \(k.\)
%\hfill \qedsymbol
\end{proof}

%%%%%%%%%%%%%%%%%%%%%%%%%%%%%%%%%%%%%%%%%%%%%%%%%%%%%%%%%%%%%%%%%%%%%%%%%%%%%%%%%%%%%%%%
\begin{proof}[Proof of Theorem \ref{prpl1nsd}.]
Since \(\ell_1\)-norm symmetric distributions are closed under marginalization \citet[Theorem 3.1]{McNeil-2009}, we may restrict ourselves to the case \(q=1.\)
The general case follows similarly, considering subvectors of the form \((Y_i,(Y_1,\ldots,Y_{i-1},\XX))\) and \((Y_i,(Y_1,\ldots,Y_{i-1})\), respectively, for \(i\in \{2,\ldots,q\}\).
Let \((Y_n,\XX_n)\eqd R_n \, \SSS_{1+p}\) be a sequence of \(\ell_1\)-norm symmetric random vectors and let \((Y,\XX)\eqd R \, \SSS_{1+p}\) such that \(F_{R_n}\) and \(F_R\) are continuous with \(F_{R_n}(0)=F_R(0)=0\) for all \(n \in \mathbb{N}.\)
Then, the random quantities \(Y\), \(Y_n\), \(\XX\), and \(\XX_n\) have continuous distribution functions.
%In what follows we use the characterization of \(\ell_1\)-norm symmetric distributions by Archimedean copulas due to \cite{McNeil-2009}.
Independence of \(R_n\) and \(\SSS_{1+p}\) (and similarly of \(R\) and \(\SSS_{1+p}\)) implies for the characteristic functions that
\begin{align*}
  \varphi_{(Y_n,\XX_n)}({\bf t}) 
  & = \varphi_{R_n\SSS_{1+p}}({\bf t})
    = \int_{[0,\infty)} \mathbb{E} \left( \exp(i\,{\bf t}' (r\SSS_{1+p})) \right) \de P^{R_n}(r)
    = \int_{[0,\infty)} \varphi_{{\bf t}'\SSS_{1+p}} (r) \de P^{R_n}(r).
\end{align*}
Since \(r \mapsto \varphi_{{\bf t}'\SSS_{1+p}} (r)\) is continuous and bounded, weak convergence \(R_n\xrightarrow{~d~} R\) implies \(\varphi_{(Y_n,\XX_n)} = \varphi_{R_n\SSS_{1+p}} \to \varphi_{R\SSS_{1+p}} = \varphi_{(Y,\XX)}\) pointwise and thus \((Y_n,\XX_n)\xrightarrow{d} (Y,\XX)\).
Hence, by \citet[Lemma A.2]{sfx2021vine}, the underlying copulas converge uniformly, i.e., \(C_{(Y_n,\XX_n)} \to C_{(Y,\XX)}\) uniformly, and similarly, for the associated survival copulas, \(\hat{C}_{(Y_n,\XX_n)} \to \hat{C}_{(Y,\XX)}\) uniformly.
Due to \citet[Theorem 3.1(i)]{McNeil-2009}, \((Y_n,\XX_n)_{n \in \mathbb{N}}\) and \((Y,\XX)\) have Archimedean survival copulas \((\hat{C}_{(Y_n,\XX_n)})_{n \in \mathbb{N}}\) and \(\hat{C}_{(Y,\XX)}\)
that are the distribution functions of the random vectors \((1-F_{Y_n}(Y_n),{\bf 1}-{\bf F}_{\XX_n}(\XX_n))_{n \in \mathbb{N}}\) and \((1-F_Y(Y),{\bf 1}-{\bf F}_{\XX}(\XX))\) respectively, where \({\bf 1} := (1,\ldots,1)\in \R^p \) and \({\bf F}_{\XX_n} = (F_{X_{n,1}},\ldots,F_{X_{n,p}}),\) similarly for \({\bf F}_{\XX}.\)
%\((\psi_n)_{n\in \N}\) is uniformly bounded and since \(\psi_n\) and \(\psi\) are convex and continuous with \(\psi_n(0)=\psi(0)=1,\) it follows that \(\psi_n\) converges to \(\psi\) uniformly due to Arzel{\`a}-Ascoli's theorem.
By \citet[Theorem 4.1]{kasper2022}, 
we obtain on the one hand that
the sequence of densities \((\hat{c}_{\XX_n})_{n \in \mathbb{N}}\) of copulas \(\hat{C}_{\XX_n}\), \(n \in \mathbb{N}\), converges \(\lambda^p\)-almost everywhere in \([0,1]^p\) to the density \(\hat{c}_{\XX}\) of copula \(\hat{C}_{\XX}\). On the other hand, 
there exists some measurable set \(\Lambda \subseteq [0,1]^p\) with \(\mu_{\hat{C}_{\XX}}(\Lambda)=1\) (where \(\mu_C\) denotes the copula measure of the copula \(C\)) such that for every \(\uu \in \Lambda\) the sequence of Markov kernels (i.e.~the regular conditional distributions) \((K_{\hat{C}_{(Y_n,\XX_n)}}(\uu,.))_{n \in \mathbb{N}}\) of copulas \(\hat{C}_{(Y_n,\XX_n)}\), \(n \in \mathbb{N}\), converges weakly to the Markov kernel \(K_{\hat{C}_{(Y,\XX)}}(\uu,.)\) of the copula \(\hat{C}_{(Y,\XX)}\).
Since $F_R$ is assumed to be continuous, \citet[Theorem 5.12]{kasper2022} implies that the limiting Markov kernels \(t \mapsto K_{\hat{C}_{(Y,\XX)}}(\uu,[0,t])\) are continuous distribution functions.
With the above considerations, it follows for \(k_n(\uu,s) := K_{\hat{C}_{(Y_n,\XX_n)}}(\uu,[0,s])\) and \(k(\uu,s) := K_{\hat{C}_{(Y,\XX)}}(\uu,[0,s])\) that
\begin{eqnarray*}
  \lefteqn{ \left| P \big(1-F_{Y_n}(Y_n) \leq t, 1-F_{Y_n}(Y_n') \leq t'\big) - P \big(1-F_{Y}(Y) \leq t, 1-F_{Y}(Y') \leq t'\big) \right|} 
  \\
  &   =  & \Big| \int_{[0,1]^p} \hspace{-.5cm} P \big(1-F_{Y_n}(Y_n) \leq t |  {\bf 1}-{\bf F}_{\XX_n}(\XX_n) = \uu \big)  \, 
  P \big(1-F_{Y_n}(Y_n') \leq t' |  {\bf 1}-{\bf F}_{\XX_n}(\XX_n) = \uu \big) \de P^{{\bf 1}-{\bf F}_{\XX_n}(\XX_n)} (\uu) 
  \\
  &      & \qquad - \int_{[0,1]^p} \hspace{-.5cm} P \big(1-F_{Y}(Y) \leq t \mid  {\bf 1}-{\bf F}_{\XX}(\XX) = \uu \big) \,
  P \big(1-F_{Y}(Y') \leq t' \mid  {\bf 1}-{\bf F}_{\XX}(\XX) = \uu \big) \de P^{{\bf 1}-{\bf F}_{\XX}(\XX)} (\uu) \Big| 
  \\
  &   =  & \Big| \int_{[0,1]^p} k_n(\uu,t) \, k_n(\uu,t') \; \de \mu_{\hat{C}_{\XX_n}}(\uu)
 - \int_{[0,1]^p} k(\uu,t) \, k(\uu,t') \; \de \mu_{\hat{C}_{\XX}}(\uu) \Big|
  \\
  &   =  & \Big| \int_{[0,1]^p} k_n(\uu,t) \, k_n(\uu,t') \; \de \mu_{\hat{C}_{\XX_n}}(\uu)
   - \int_{[0,1]^p} k_n(\uu,t) \, k_n(\uu,t') \; \de \mu_{\hat{C}_{\XX}}(\uu) \Big|
  \\
  &      & + \; \Big| \int_{[0,1]^p} k_n(\uu,t) \, k_n(\uu,t') \; \de \mu_{\hat{C}_{\XX}}(\uu)
  - \int_{[0,1]^p} k(\uu,t) \, k(\uu,t') \; \de \mu_{\hat{C}_{\XX}}(\uu) \Big|
  \\
  & \leq & \int_{[0,1]^p} k_n(\uu,t) \, k_n(\uu,t') \, \left| \hat{c}_{\XX_n} (\uu) - \hat{c}_{\XX} (\uu) \right| \; \de \lambda^p (\uu)
  \\
  &      & + \; \int_{[0,1]^p} \left| k_n(\uu,t) - k(\uu,t) \right| \; \de \mu_{\hat{C}_{\XX}}(\uu)
   + \; \int_{[0,1]^p} \left| k_n(\uu,t') - k(\uu,t') \right| \; \de \mu_{\hat{C}_{\XX}}(\uu)
  \\
  & \longrightarrow & 0 \qquad \text{ when } {n \to \infty}, 
\end{eqnarray*}
where we use for the inequality that \(|ab-cd|=|b(a-c)+c(b-d)| \leq |a-c|+|b-d|\) for \(|b|,|c| \leq 1\).
The convergence follows from the pointwise convergence of the densities and from the weak convergence of the Markov kernels on the set \(\Lambda\).
Thus, the Markov products \((1-F_{Y_n}(Y_n),1-F_{Y_n}(Y'_n))\) relative to \({\bf 1}-{\bf F}_{\XX_n}(\XX_n)\) 
converge weakly to the Markov product \((1-F_{Y}(Y),1-F_{Y}(Y'))\) relative to 
\({\bf 1}-{\bf F}_{\XX}(\XX).\)
Hence, also 
%By disintegration, change of coordinates and the continuous mapping theorem, it then follows that also 
the sequence of Markov products \((F_{Y_n}(Y_n),F_{Y_n}(Y'_n))\) relative to \(\XX_n\) converges weakly to the Markov product \((F_{Y}(Y),F_{Y}(Y'))\) relative to \(\XX,\) i.e.,~the sequence of copulas \((C_{Y_n,Y'_n})_{n \in \mathbb{N}}\) converges uniformly to the copula \(C_{Y,Y'}\).
Finally, weak convergence of \((Y_n,Y_n')_{n \in \mathbb{N}}\) to \((Y,Y')\) follows from \citet[Lemma A.2(1)]{sfx2021vine} and the fact that \((F_{Y_n})_{n \in \mathbb{N}}\) weakly converges to the continuous distribution function \(F_Y\) by assumption.
\end{proof}

%Appendices should be provided in \verb|{appendix}| environment,
%before Acknowledgements.

%If there is only one appendix,
%then please refer to it in text as \ldots\ in the \hyperref[appn]{Appendix}.

%%%%%%%%%%%%%%%%%%%%%%%%%%%%%%%%%%%%%%%%%%%%%%
%% Example with multiple Appendixes:        %%
%%%%%%%%%%%%%%%%%%%%%%%%%%%%%%%%%%%%%%%%%%%%%%

%%%%%%%%%%%%%%%%%%%%%%%%%%%%%%%%%%%%%%%%%%%%%%
%% Support information, if any,             %%
%% should be provided in the                %%
%% Acknowledgements section.                %%
%%%%%%%%%%%%%%%%%%%%%%%%%%%%%%%%%%%%%%%%%%%%%%
\section*{Acknowledgments}
The authors would like to thank the anonymous referees and an Associate
Editor for detailed reading of the manuscript and their constructive comments that improved the
quality of this paper.

%%%%%%%%%%%%%%%%%%%%%%%%%%%%%%%%%%%%%%%%%%%%%%
%% Funding information, if any,             %%
%% should be provided in the                %%
%% funding section.                         %%
%%%%%%%%%%%%%%%%%%%%%%%%%%%%%%%%%%%%%%%%%%%%%%
\section*{Funding}
This research was funded in whole by the Austrian Science Fund (FWF) [10.55776/P36155] project \emph{ReDim: Quantifying Dependence via Dimension Reduction} and [10.55776/PAT1669224] project \emph{SORT: Stochastic orders for functional dependence}.
%This research was funded in whole by the Austrian Science Fund (FWF) [10.55776/P36155] project ReDim: Quantifying Dependence via Dimension Reduction. 
The authors further acknowledge the support from the WISS 2025 project `IDA-lab Salzburg' 20204-WISS/225/197-2019 and 20102-F1901166-KZP.

%%%%%%%%%%%%%%%%%%%%%%%%%%%%%%%%%%%%%%%%%%%%%%
%% Supplementary Material, including data   %%
%% sets and code, should be provided in     %%
%% {supplement} environment with title      %%
%% and short description. It cannot be      %%
%% available exclusively as external link.  %%
%% All Supplementary Material must be       %%
%% available to the reader on Project       %%
%% Euclid with the published article.       %%
%%%%%%%%%%%%%%%%%%%%%%%%%%%%%%%%%%%%%%%%%%%%%%

%%%%%%%%%%%%%%%%%%%%%%%%%%%%%%%%%%%%%%%%%%%%%%%%%%%%%%%%%%%%%
%%                  The Bibliography                       %%
%%                                                         %%
%%  imsart-???.bst  will be used to                        %%
%%  create a .BBL file for submission.                     %%
%%                                                         %%
%%  Note that the displayed Bibliography will not          %%
%%  necessarily be rendered by Latex exactly as specified  %%
%%  in the online Instructions for Authors.                %%
%%                                                         %%
%%  MR numbers will be added by VTeX.                      %%
%%                                                         %%
%%  Use \cite{...} to cite references in text.             %%
%%                                                         %%
%%%%%%%%%%%%%%%%%%%%%%%%%%%%%%%%%%%%%%%%%%%%%%%%%%%%%%%%%%%%%

%% if your bibliography is in bibtex format, uncomment commands:
\bibliographystyle{abbrvnat} % Style BST file (imsart-number.bst or imsart-nameyear.bst)
\bibliography{arxiv_continuity_xi.bib}       % Bibliography file (usually '*.bib')

\end{document}